\newtheorem{theorem}{Theorem}[section]
\newtheorem{proposition}[theorem]{Proposition}
\newtheorem{corollary}[theorem]{Corollary}
\newtheorem{lemma}[theorem]{Lemma}
\newtheorem{assumption}[theorem]{Assumption}
\newtheorem{definition}[theorem]{Definition}
\newtheorem{remark}[theorem]{Remark}
\numberwithin{equation}{section}
\title{Stability estimate for the discrete Calder\'on problem from  partial data}
 \author[Xiaomeng Zhao]{Xiaomeng Zhao}
\address{KLAS, School of Mathematics and Statistics, Northeast Normal University,
Changchun, Jilin, 130024, China}
\email{zhaoxm600@nenu.edu.cn}
 \author[Ganghua Yuan]{Ganghua Yuan}
\address{KLAS, School of Mathematics and Statistics, Northeast Normal University,
Changchun, Jilin, 130024, China}
\email{yuangh925@nenu.edu.cn}
\subjclass[2010]{35R30, 35J25, 65N06.}
\keywords{discrete Calder\'on problem, stability estimate, partial boundary data, discrete  Carleman estimate, discrete unique continuation estimate}
\begin{document}

\begin{abstract}

In this paper, we focus on the analysis of discrete versions of the Calder\'on problem with partial boundary data in dimension $d\geq 3$. In particular, we establish logarithmic stability estimates for the discrete Calder\'on problem on an arbitrarily small portion of the boundary under suitable a priori bounds.  For this end, we will use CGO solutions and derive a new discrete Carleman estimate and a key unique continuation estimate. Unlike the continuous case, we use a new strategy inspired by \cite{robbiano1995fonction} to prove the key discrete unique continuation estimate by utilizing  the new Carleman estimate with boundary observations for a discrete Laplace operator.

\end{abstract}

\maketitle

\section{Introduction}
In this work, we are interested in a discrete version of the Calder\'on problem proposed by Calder\'on  \cite{calderon2006inverse} in 1980. He asked if it is possible to determine the electrical conductivity of a
medium by making voltage and current measurements at the boundary of the medium. This
inverse problem is known as the Calder\'on problem and has been the origin of many interesting
works. An excellent survey of the history of this problem and its developments can be found
in \cite{uhlmann201230}.

To be more precise, let ${\mathcal G}$ be a smooth bounded domain of $\mathbb R^d$. Let us denote by $n=n(x)$ the outward normal vector at $x\in\partial{\mathcal G}$. Given ${\mathcal\kappa}$ a positive conductivity function, we define the Dirichlet-to-Neumann (DtN) map associated with the conductivity problem
\begin{align}
	\Lambda({\mathcal\kappa}):H^{\frac{1}{2}}(\partial{\mathcal G})\rightarrow H^{-\frac{1}{2}}(\partial{\mathcal G}),\quad \Lambda(\mathcal\kappa):g\mapsto{\mathcal\kappa}\nabla u\cdot n,
\end{align}
where $u$ is the unique solution to the elliptic problem
\begin{align}\label{e12}
	{\rm div}({\mathcal\kappa}\nabla u)=0~{\rm in}~{\mathcal G}\quad{\rm and}\quad u=g~{\rm on}~\partial{\mathcal G}.
\end{align}
The Calder\'on problem is the following: given $\Lambda({\mathcal\kappa})$, can we retrieve the conductivity ${\mathcal\kappa}$? Such a question is considered as the archetype of inverse problems and was
originally motivated by oil prospection \cite{calderon2006inverse} but has application in EIT, in the medical imaging
method \cite{cheney1999electrical} and in several other inverse problems as detection of leaks in buried pipes \cite{jordana2001electrical}.

On  the other hand, it is well known that the Liouville transform allows rewriting (\ref{e12}) in the following way:
\begin{align}\label{e14}
	-{\mathcal\kappa}^{-\frac{1}{2}}{\rm div}\left({\mathcal\kappa}\nabla({\mathcal\kappa}^{-\frac{1}{2}}v)\right)=-\Delta v+qv=0,\quad {\rm where}~q=\frac{\Delta\sqrt{{\mathcal\kappa}}}{\sqrt{{\mathcal\kappa}}}.
\end{align}

Thus, Calder\'on's problem can be considered as: given $q$ a regular potential function, we define the DtN map in the following way:
\begin{align}
	\Lambda(q):H^{\frac{1}{2}}(\partial{\mathcal G})\rightarrow H^{-\frac{1}{2}}(\partial{\mathcal G}),\quad \Lambda(q):g\mapsto\nabla u\cdot n,
\end{align}
for a prescribed voltage $g$ on $\partial{\mathcal G}$, where $u$ is the unique solution of the problem
\begin{align}\label{l15}
	-\Delta u+qu=0~{\rm in}~{\mathcal G}\quad{\rm and}\quad u=g~{\rm on}~\partial{\mathcal G}.
\end{align}
Hence, the Calder\'on problem consists in recovering the potential $q$ from the DtN map in (\ref{e14}).

Focusing on $d\geq 3$, there are many references in the continuous case. And there are subcases that consider measurements on the whole boundary $\partial{\mathcal G}$ (full data) (see e.g., \cite{sylvester1987global})  or on a nonempty open subset of $\partial{\mathcal G}$ (partial data) (see e.g., \cite{bukhgeim2002recovering}, \cite{isakov2007uniqueness}, \cite{sjostrand2007calderon}, \cite{krupchyk2019stability}, \cite{2023Stability}). 
We can see the survey \cite{kenig2014recent} for more references.

 The discrete version of these inverse problems are interesting themselves \cite{horvarth2016}.
 The discrete version of the Calder\'on problem is related to resistor networks, where we have to determine the conductivities for an electrical network from the knowledge of the electrical potential and current in some nodes. The mathematical model for this is a weighted graph and a function defined on the vertices, where the weights of the edges are related to the conductivity of the edges, and the function is the electrical potential.
The discrete version of the Calder\'on problem has been studied in \cite{ervedoza2011uniform} and \cite{lecaros2023discrete}.
In \cite{ervedoza2011uniform}, a discrete stability estimate is obtained by using full boundary data.
	For the partial boundary data case, \cite{lecaros2023discrete} prove a stability result for the discrete Calder\'on problem in a cube in $\mathbb R^d, d\geq3$,  when measurements are taken on the boundary  except for one side of the cube. In this paper, we prove a stability result for the discrete Calder\'on problem with measurements  taken just on arbitrarily small portions of one side of the cube in $\mathbb R^d, d\geq3$.

This work is motivated by \cite{ervedoza2011uniform, krupchyk2019stability, lecaros2023discrete}.  To study the discrete Calder\'on problems, we first derive a Carleman estimate with boundary observation and a key unique continuation result for the discrete equation. Then,
we use the discrete CGO  solution (constructed in \cite{ervedoza2011uniform}) to bound the Fourier transform of the difference of the potentials $q_1-q_2$. This produces
the logarithm in the dependence of the error of the DtN operator on the stability inequality.
Since the Carleman parameter is limited by $h$, the bound of the Fourier transform is obtained
only in a ball limited by $h$, unlike the continuous case.

One of the main difficulties we encounter to deduce the stability estimate is how to develop the key unique continuation estimate for the discrete Calder\'on problem with homogeneous boundary condition.  Due to the "cusps" of the domain under consideration,
we can not use the special weight functions as in \cite{krupchyk2019stability} or \cite{2023Stability} to derive the discrete Carleman estimate with arbitrary boundary data, which is a key tool to prove the unique continuation result (see e.g., \cite{krupchyk2019stability}, \cite{2023Stability} ). Fortunately, inspired by \cite{robbiano1995fonction}, we overcome this difficulty by using a new strategy and deriving  discrete Carleman estimate with other kind of weight functions.  Of course, for the discrete unique continuation estimate again, as expected
in view of the above discussion, the parameters in the  estimate should be limited
in some range depending on the mesh size.

In the study of inverse problems and controllability in the continuous setting, Carleman
estimates have been one of the main tools, and the discrete setting has not been the exception. In recent
years, the development of discrete or semi-discrete Carleman estimates have been used to obtain different results. For instance, the controllability of spatial discrete parabolic systems \cite{2019Insensitizing},  \cite{boyer2010discrete},
\cite{2011Uniform}, \cite{boyer2014carleman}, \cite{ce2022Carleman}, \cite{tnt2014Carleman},
 time discrete parabolic systems \cite{bfh2019Carleman} and the fully discrete case \cite{Casanova2021Carleman}, \cite{lrm2022Discrete}.
 Furthermore,
for inverse problems we find \cite{baudouin2013convergence} for the semi-discrete wave equation in dimension one and  \cite{baudouin2015stability}
for arbitrary dimension.
For the discrete Laplacian operator, we refer to \cite{ervedoza2011uniform} and \cite{lecaros2023discrete}.
Moreover, in the study of discrete propagation of smallness see \cite{Roncal2021Discrete}.
The novelty of the discrete Carleman estimate presented in this paper is that it uses boundary observation and a different weight function. Those discrete or semi-discrete Carleman estimates mentioned above
use internal observations.

In section 2, we give some preliminaries of the discrete problem and present the main
result. In section 3, we  present discrete calculus results and prove a Carleman estimate which is the basis of the proof of the unique continuation result, given in section 4. In section 5, we prove our main result concerning the stability for the discrete Calder\'on problem with
partial data. Finally, in appendix, we focus on the technical steps to obtain the
discrete Carleman estimate from section 3.

\section{Some preliminaries  and main result}
In this section, we introduce notations specific to the discrete problem that will be used throughout this paper.

Let us consider $N\in \mathbb N$, and $h:=\frac{1}{N+1}$ small enough, which represents the size of our mesh. We also define the Cartesian grid of $[0,1]^d$ as $\mathcal K_h:=h[[0,N+1]]^d$, where $[[0,N+1]]^d:=\mathbb Z^d\cap[0,N+1]^d$. We set $\Omega:=(0,1)^d, d\geq3, \Omega_h:=\Omega\cap\mathcal K_h$ and $\partial\Omega_h:=\partial\Omega\cap\mathcal K_h$.

\subsection{Meshes and operators}

For any set of points $\mathcal W_h\subset h\mathbb Z^d$ or $\mathcal W_h\subset \tau_k(h\mathbb Z^d)$, we define the  mesh in the direction $e_k$, with $\{e_k\}_{k=1}^d$ the usual base of $\mathbb R^d$, as $$\mathcal W_{h,k}^*:=\tau_k(\mathcal W_h)\cup\tau_{-k}(\mathcal W_h),$$where $\tau_{\pm k}(\mathcal W_h):=\{x\pm\frac{h}{2}e_k|x\in \mathcal W_h\}$.
Similarly, we define $\mathcal W_{h,kj}^*:=(\mathcal W_{h,k}^*)_j^*=\{x\pm\frac{h}{2}e_k\pm\frac{h}{2}e_j|x\in \mathcal W_h\}$. Obviously, $\mathcal W_{h,kj}^*=\mathcal W_{h,jk}^*$. We will write briefly $\overline{\mathcal W}_{h,k}$ when $k=j$. This enables us to define the boundary points, in the $e_k$ direction, as
$$\partial_k\mathcal W_h:=\overline{\mathcal W}_{h,k}\backslash\mathcal W_h.$$
Moreover, for $1\leq k\leq d$, we define
$$\overline{\mathcal W}_h:=\bigcup_{j=1\atop j\neq k}^d(\tau_j^2(\overline{\mathcal W}_{h,k}))\cup(\tau_{-j}^2(\overline{\mathcal W}_{h,k}))
\quad
{\rm and} \quad\partial\mathcal W_h:=\overline{\mathcal W}_h\backslash \mathcal W_h.$$

For any set of points $\mathcal W_h\subset h\mathbb Z^d$ or $\mathcal W_h\subset \tau_k(h\mathbb Z^d)$,  denote as $\mathcal C(\mathcal W_h)$ the set of discrete functions from $\mathcal W_h $ to $\mathbb R$ and define the difference and average operators in the direction $e_k$ as
$$ (D_ku)(x):=\frac{\tau_ku(x)-\tau_{-k}u(x)}{h}, \quad x\in \mathcal W_{h,k}^*,$$
$$ (A_ku)(x):=\frac{\tau_ku(x)+\tau_{-k}u(x)}{2}, \quad x\in \mathcal W_{h,k}^*,$$
where $\tau_{\pm k}u(x):=u(x\pm \frac{h}{2}e_k)$. We notice that the discrete operators $A_k$ and $D_k$ are defined from $\mathcal C(\overline{\mathcal W}_h)$ to $\mathcal C(\mathcal W_{h,k}^*)$.

Moreover, we define the outward normal of the set $\mathcal W_h$ in the direction $e_k$ as $n_k\in\mathcal C(\partial_k\mathcal W_h)$ by
\begin{align*}
\forall x \in \partial_k\mathcal W_h, n_k(x)=	\begin{cases}
		1 &{\rm if}~ \tau_{-k}(x)\in\mathcal W_{h,k}^*~{\rm and} ~\tau_{k}(x)\notin\mathcal W_{h,k}^* ,\\
		-1 &{\rm if}~ \tau_{-k}(x)\notin\mathcal W_{h,k}^*~{\rm and} ~\tau_{k}(x)\in\mathcal W_{h,k}^* .
	\end{cases}
\end{align*}
Then, we  can define $\partial_k^+\mathcal W_h:=\{x\in\partial_k\mathcal W_h, n_k(x)=1\}$ and $\partial_k^-\mathcal W_h:=\{x\in\partial_k\mathcal W_h, n_k(x)=-1\}$.
Additionally, to introduce the boundary condition we also define the trace operator in the direction $k$, denoted by $t_r^k$, for $u\in\mathcal C(\mathcal W_{h,k}^*)$ as
\begin{align*}
\forall x \in \partial_k\mathcal W_h, t_r^k(u)(x):=	\begin{cases}
		\tau_{-k}u(x) &n_k(x)=1 ,\\
		\tau_{k}u(x) &n_k(x)=-1.
	\end{cases}
\end{align*}

\subsection{The discrete Laplace operator}Similar to \cite{ervedoza2011uniform} and \cite{lecaros2023discrete}, we define the operator $\Delta_h$ from $\mathcal C(\overline\Omega_h)$ to $\mathcal C(\Omega_h)$ as
\begin{align}
	\Delta_hu:=\sum_{k=1}^dD_k(\sigma^kD_ku) ~{\rm in}~\Omega_h,
\end{align}
where $\sigma^k>0$ and $\sigma^k\in \mathcal C((\overline\Omega_{h})_{i}^*),i=1,\dots,d$.  In the sequel, we shall use the symbol $\sigma^k$ for both the continuous function and its sampling on the discrete sets. In fact, from the context, one will be able to deduce the appropriate sampling. In the above expression, with $u $ defined on  $\overline\Omega_h$, it is clear that the function $\sigma^k$ is sampled on  $(\overline\Omega_{h})_{i}^*,i=1,\dots,d,$ as $D_k$ is defined on $(\overline\Omega_{h})_{k}^*$ and the other operator $D_k$ acts on functions defined on $(\overline\Omega_{h})_{k}^*$.

Given a set $\mathcal W_h$, for $u\in\mathcal C(\mathcal W_h)$, we define its $L^\infty_h(\mathcal W_h)$ norm as

$$\|u\|_{L_h^\infty(\mathcal W_h)}:=\max_{x\in \mathcal W_h}\{|u(x)|\}.$$
Then, we introduce the following definition
\begin{definition}
	\begin{align*}
		&\epsilon_d(h):=\sum_{k,j}\|D_j(\sigma^k)\|_{L_h^\infty(\overline\Omega_{h,k})},\quad \epsilon_a(h):=\sum_{k,j}\|A_j(\sigma^k)-1\|_{L_h^\infty(\overline\Omega_{h,k})},
	\end{align*}
	and $
	M(h):=\sum_{k}\|\sigma^k\|_{L_h^\infty((\overline\Omega_{h})_k^*)}$.
\end{definition}
Then we consider the following assumption for the mesh parameters.
\begin{assumption}\label{simg}
	We suppose that
	$$M_0:=\sup_{h\rightarrow0}M(h)<\infty,\quad\epsilon_d,\epsilon_a<1. $$
	\end{assumption}
	We note that if $\epsilon_a<1$, there exist $\underline\sigma$ and $\overline\sigma$, such that,
$$0<\underline\sigma\leq A_j(\sigma^k)\leq\overline\sigma\quad{\rm in}~\overline\Omega_{h,k}.$$
\begin{remark}
	Assumption \ref{simg}  originates from \cite{ervedoza2011uniform} which is used to assure  the existence of CGO solution for the discrete Schr\"odinger equation (see also \cite{lecaros2023discrete}).
\end{remark}

And for any potential $q$ we consider the following:
\begin{assumption}\label{assq}
	The potential $q\in\mathcal C(\Omega_h)$ verifies that the system
	\begin{align*}
		-\Delta_hu+qu&=0,\quad {\rm in}~\Omega_h,\\
		u&=g,\quad {\rm on}~\partial\Omega_h,
	\end{align*}
	has only one solution for any $g\in\mathcal C(\partial\Omega_h)$.
	\end{assumption}

\begin{assumption}\label{as2}
The potential $q\in\mathcal C(\Omega_h)$ verifies that  for the system
\begin{align*}
		-\Delta_hu+qu&=f,\quad {\rm in}~\Omega_h,\\
		u&=0,\quad {\rm on}~\partial\Omega_h,
	\end{align*}
there exists a constant $C>0$ independent of $h$ such that	
	\begin{align*}
		\|u\|_{H_h^1(\Omega_h)}\leq C\|f\|_{L_h^2(\Omega_h)}.
	\end{align*}
\end{assumption}

\begin{remark}
If $q\geq 0$, then we can check that Assumption \ref{as2} holds by using the discrete Poincar\'e inequality.

\end{remark}

Now, let us consider the definition of the discrete normal derivative.
\begin{definition}\label{deno}
	We define the normal derivative in $\partial\Omega_h$, for any $u\in\mathcal C(\overline{\Omega}_h)$,
	$$\partial_nu:=\sum_{k=1}^dt_r^k(\sigma^kD_ku)n_k, \quad {\rm on}~\partial\Omega_h. $$
\end{definition}
Moreover, via the discrete normal derivative, we consider the DtN operator.
\begin{definition}
	We define the DtN map:
	$$\Lambda_h[q](g):=\partial_nu,\quad{\rm on}~\partial\Omega_h, $$
	where $u\in\mathcal C(\overline\Omega_h)$ is the solution of
	\begin{align*}
		-\Delta_hu+qu&=0,\quad {\rm in}~\Omega_h,\\
		u&=g,\quad {\rm on}~\partial\Omega_h.
	\end{align*}
\end{definition}

\subsection{Discrete integral and norm}
For a  finite  set $\mathcal W_h\subset h\mathbb Z^d$ or $\mathcal W_h\subset \tau_k(h\mathbb Z^d)$, we define the discrete integral for $u\in\mathcal C(\mathcal W_h)$ as
$$\int_{\mathcal W_h}u:=h^d\sum_{x\in\mathcal W_h}u(x), $$
and the following $L_h^2$ inner product in $\mathcal C(\mathcal W_h)$
$$(u,v)_{\mathcal W_h}:=\int_{\mathcal W_h}uv, \quad u,v\in\mathcal C(\mathcal W_h).$$

We also define the following norms in $\mathcal C(\mathcal W_h)$:
\begin{align*}
	&\|u\|_{L_h^2(\mathcal W_h)}^2:=(u,u)_{\mathcal W_h},~\quad\quad\quad\quad\quad\quad\quad \|u\|_{\mathring{H}_h^1(\mathcal W_h)}^2:=\sum_{k=1}^d\int_{\mathcal W_{h,k}^*}|D_ku|^2,\\
	 & \|u\|_{H_h^1(\mathcal W_h)}^2:=\|u\|_{L_h^2(\overline{\mathcal W}_h)}^2+\|u\|_{\mathring{H}_h^1(\mathcal W_h)}^2,\quad \|u\|_{\mathring{H}_h^2(\mathcal W_h)}^2:=\sum_{k=1}^d\sum_{j=1}^d\int_{\mathcal W_{h,kj}^*}|D_kD_ju|^2,\\
	&\|u\|_{H_h^2(\mathcal W_h)}^2:=\|u\|_{H_h^1(\mathcal W_h)}^2+\|u\|_{\mathring{H}_h^2(\mathcal W_h)}^2.
\end{align*}

 Next, we introduce the discrete integration on the boundary for a  subset $\Gamma_h$ of $\partial\mathcal W_h$ as
 $$\int_{\Gamma_h} u:=h^{d-1}\sum_{x\in\Gamma_h}u(x),$$
 where $u\in\mathcal C(\Gamma_h)$ and $\Gamma_h\subset\partial\mathcal W_h$. Additionally, we introduce a norm on the boundary:
 $$|f|_{L_h^2(\Gamma_h)}:=\left(h^{d-1}\sum_{x\in\Gamma_h}|f(x)|^2\right)^{1/2},\quad {\rm for~ any}~\Gamma_h\subset\partial\mathcal W_h.$$

 Then, as in \cite{ervedoza2011uniform}, we also define the Sobolev norms for any trace function $g$ in $\mathcal C(\partial\Omega_h)$ as
 \begin{align*}
 	&|g|_{H_h^{1/2}(\partial\Omega_h)}:=\min_{u|_{\partial\Omega_h}=g}\|u\|_{H_h^1(\Omega_h)},\\
 	&|g|_{H_h^{-1/2}(\partial\Omega_h)}:=\max_{|f|_{H_h^{1/2}(\partial\Omega_h)}=1}\int_{\partial\Omega_h}fg.
 \end{align*}

\subsection{Main result}

Before stating our result precisely, let us introduce the discrete $H^r$-norms for $r\in\mathbb R$. For $h>0$, set $\hat{\mathcal  K}_h=[[0,N]]^d:=\mathbb Z^d\cap[0,N]^d$ and define the discrete Fourier transform of a function $u\in\mathcal C(\Omega_h)$, denoted by $\mathcal F(u)$ in $\mathcal C(\hat{\mathcal  K}_h)$ as
$$\mathcal F(u)(\xi):=h^d\sum_{x\in \Omega_h}u(x)e^{-2\pi {\rm i}(x\cdot\xi)}, \quad \forall\xi\in \hat{\mathcal  K}_h.$$
Then, we define the Sobolev-discrete norm $H^r$, for any $r\in\mathbb R$, by
$$|u|_{H_h^r(\Omega_h)}:=\left(\sum_{\xi\in\hat{\mathcal  K}_h}|\mathcal F(u)(\xi)|^2(1+|\xi|^2)^r\right)^{1/2}.$$
In \cite{ervedoza2011uniform}, the authors mentioned that with this definition, the celebrated Plancherel formula states that the norm $|u|_{H^0(\Omega_h)}$ coincides with $\|u\|_{L_h^2(\Omega_h)}$. Moreover, as  in the continuous case, it is classical to show that $\|\cdot\|_{H^1_h(\Omega_h)}$, $\|\cdot\|_{H^2_h(\Omega_h)}$ defined in Section 2.3 and $|\cdot|_{H^1_h(\Omega_h)}$,$|\cdot|_{H^2_h(\Omega_h)}$ are equivalent independently of $h$ respectively.

Now, we present the main result in this paper.
\begin{theorem}\label{theo}
Let  $M>0$. Let $\Gamma$ be a non-empty open subset of $\partial \Omega$  satisfying  $\Gamma_h:=(\Gamma\cap\partial\Omega_h)\subset\partial_i^\pm\Omega_h$, $1\leq i\leq d$. Suppose that Assumption \ref{simg} is satisfied. Let $q_1$, $q_2\in\mathcal C(\Omega_h)$ be two potentials satisfying  Assumption \ref{assq}, Assumption \ref{as2} and  $\|q_j\|_{L_h^\infty(\Omega_h)}\leq M, j=1,2$, $q_1=q_2$ in $\mathcal O_h=\mathcal O\cap\Omega_h$, where $\mathcal O\subset\Omega$ is a neighborhood of $\partial\Omega$ . Set
\begin{align}
	\|\Lambda_h[q_1]-\Lambda_h[q_2]\|_{\mathcal L_h(\Gamma_h)}:=\max_{|g|_{H^{1/2}(\partial\Omega_h)=1}}\left|(\Lambda_h[q_1]-\Lambda_h[q_2])(g)\right|_{H^{-1/2}(\Gamma_h)}.
\end{align}
Then there exists a constant $h_0>0$ depending on $M$, such that for $r>0$ there exists a constant $C>0$ such that
\begin{align}\label{25}
	|q_1-q_2|_{H_h^{-r}(\Omega_h)}\leq C\max\left\{\epsilon_d^{2\alpha}, \epsilon_a^{\alpha}, h^{\alpha}, \left|\ln\left(\|\Lambda_h[q_1]-\Lambda_h[q_2]\|_{\mathcal L_h(\Gamma_h)}\right)\right|^{-2\alpha}\right\},
\end{align}
for all $0<h<h_0$, where $\alpha:=\frac{r}{2r+d}$.
\end{theorem}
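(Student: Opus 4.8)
The plan is to adapt the classical Sylvester--Uhlmann strategy, in its stability form from \cite{krupchyk2019stability}, to the discrete partial-data setting, using the discrete CGO solutions from \cite{ervedoza2011uniform} together with the discrete unique continuation estimate proved in Section~4. First I would fix $\zeta_1,\zeta_2\in\C^d$ with $\zeta_1+\zeta_2=-2\pi\xi$ for $\xi$ in the admissible Fourier ball $|\xi|\lesssim 1/h$, $\zeta_j\cdot\zeta_j=0$, and $|\operatorname{Im}\zeta_j|\sim\rho$ for a large parameter $\rho$; by \cite{ervedoza2011uniform} there are discrete CGO solutions $u_j = e^{\mathrm{i}x\cdot\zeta_j/h\text{-scaled}}(1+\psi_j)$ of $-\Delta_h u_j + q_j u_j = 0$ in $\Omega_h$, with correction terms $\psi_j$ that are $O(1/\rho)$ in the appropriate discrete norm, up to errors controlled by $\epsilon_d$, $\epsilon_a$ and $h$. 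The standard integration-by-parts (discrete Green's identity, using Definition~\ref{deno} and the summation-by-parts formulas from Section~3) gives
\begin{align*}
\int_{\Omega_h}(q_1-q_2)u_1 u_2 = \int_{\partial\Omega_h}\big(\Lambda_h[q_1]-\Lambda_h[q_2]\big)(g)\,\overline{v},
\end{align*}
where $g=u_1|_{\partial\Omega_h}$ and $v$ solves the $q_2$-equation with the same boundary data; the left side, after plugging in the CGO ansatz, is $\mathcal F(q_1-q_2)(\xi)$ plus remainder terms of size $O(1/\rho)$ and $O(\epsilon_d,\epsilon_a,h)$.

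The genuinely partial-data step is that the right-hand boundary integral is only controlled on $\Gamma_h$, not all of $\partial\Omega_h$. Here I would use the hypothesis $q_1=q_2$ on the neighborhood $\mathcal O_h$ of $\partial\Omega$: this lets one extend $u_1,u_2$ (or rather the relevant solutions) slightly, and more importantly it is exactly the setting in which the unique continuation estimate of Section~4 applies — the difference $w=u_1-v$ solves $(-\Delta_h+q_2)w=0$ in $\Omega_h$ with $w=0$ on $\partial\Omega_h$, and one estimates its Cauchy data (the full $\partial_n w$ on $\partial\Omega_h$) by its Cauchy data on $\Gamma_h$ with a logarithmic loss, the quantitative form being the three-balls/Robbiano-type inequality built from the new Carleman estimate. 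Concretely this turns $\|\Lambda_h[q_1]-\Lambda_h[q_2]\|_{\mathcal L_h(\Gamma_h)}=:\varepsilon$ into a bound $|\partial_n w|_{L^2_h(\partial\Omega_h)}\lesssim (\text{alg. factors})\cdot|\ln\varepsilon|^{-\theta}+(\text{small }h\text{-errors})$, after balancing. Combining the two identities yields, for each admissible $\xi$,
\begin{align*}
|\mathcal F(q_1-q_2)(\xi)| \le C e^{C\rho}\Big(|\ln\varepsilon|^{-\theta} + \epsilon_d + \epsilon_a + h\Big) + \frac{C}{\rho},
\end{align*}
uniformly for $|\xi|\le R$ with $R\lesssim 1/h$.

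The final step is the standard optimization splitting the $H^{-r}$ norm as
\begin{align*}
|q_1-q_2|_{H^{-r}_h(\Omega_h)}^2 \le \sum_{|\xi|\le R}|\mathcal F(q_1-q_2)(\xi)|^2(1+|\xi|^2)^{-r} + \sum_{|\xi|>R}|\mathcal F(q_1-q_2)(\xi)|^2(1+|\xi|^2)^{-r}.
\end{align*}
The high-frequency tail is $\lesssim R^{-2r}$ times the (a priori bounded, by $\|q_j\|_{L^\infty_h}\le M$) $L^2_h$ norm; the low-frequency part is $\lesssim R^d e^{C\rho}(\cdots) + R^d/\rho^2$. Choosing $\rho$ a suitable multiple of $|\ln\varepsilon|$ (so that $e^{C\rho}|\ln\varepsilon|^{-\theta}$ stays controlled — this is where the logarithmic rate is born and where one must keep $\rho \lesssim 1/h$, i.e.\ the parameter restriction forced by the mesh), and then optimizing $R$ against the tail gives the exponent $\alpha=\frac{r}{2r+d}$ exactly as in the continuous case, with the algebraic mesh errors $\epsilon_d^{2\alpha},\epsilon_a^{\alpha},h^{\alpha}$ appearing because each raw error $\epsilon_d,\epsilon_a,h$ is likewise carried through the $R^d e^{C\rho}$ amplification and the same balancing. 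I expect the main obstacle to be the bookkeeping in the partial-data step: making the unique continuation estimate from Section~4 interface cleanly with the CGO boundary identity — in particular controlling the full Cauchy data of $w$ (not just $w$ itself) on $\partial\Omega_h$ by data on $\Gamma_h$, and tracking how the $h$-dependent admissible range of the Carleman parameter limits both $\rho$ and $R$ simultaneously, so that the two optimizations (in $\rho$ and in $R$) are compatible for all $0<h<h_0$.
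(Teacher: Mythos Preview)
Your overall architecture is right, but the partial-data step as you describe it will not close, and the fix is exactly the point you flag as ``the main obstacle'' without resolving. Proposition~\ref{prou} does \emph{not} estimate the full boundary Cauchy data $|\partial_n w|_{L^2_h(\partial\Omega_h)}$ by $|\partial_n w|_{L^2_h(\Gamma_h)}$; it only controls the \emph{interior} norm $\|w\|_{H^1_h(\omega_h(\varrho,3\varrho))}$ on a layer strictly inside $\Omega$. So your plan of using the full-data boundary identity $\int_{\Omega_h}(q_1-q_2)u_1u_2=\int_{\partial\Omega_h}(\Lambda_h[q_1]-\Lambda_h[q_2])(g)\,\overline v$ and then bounding the right-hand side via partial-to-full Cauchy data propagation cannot be carried out with the tools of Section~4. (There is also a slip: with $v$ solving the $q_1$-equation and $v|_{\partial\Omega_h}=u_2$, the correct object with zero boundary values is $u=v-u_2$, which satisfies $(-\Delta_h+q_1)u=(q_2-q_1)u_2$; your $w=u_1-v$ does not vanish on $\partial\Omega_h$.)

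The paper avoids the boundary altogether by a cutoff: choose $\chi_1\in C_0^\infty(\Omega)$ with $\chi_1\equiv 1$ outside $\omega(\tfrac{5}{2}\varrho)$ and $\chi_1\equiv 0$ on $\omega(\tfrac{3}{2}\varrho)$, and set $\widetilde u=\chi_1 u$. Then $\widetilde u=\partial_n\widetilde u=0$ on $\partial\Omega_h$, so multiplying $(-\Delta_h+q_1)\widetilde u$ by $u_1$ and integrating produces \emph{no} boundary term; since $q_1=q_2$ on $\mathcal O_h$ one has $\chi_1(q_2-q_1)=q_2-q_1$, and the identity becomes
\[
\int_{\Omega_h}(q_2-q_1)u_1u_2=\int_{\Omega_h} u_1\cdot\big[\text{commutator of }\Delta_h\text{ with }\chi_1\text{ acting on }u\big],
\]
where the commutator is supported in $\omega_h(\varrho,3\varrho)$. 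The right-hand side is then bounded by $\|u_1\|_{L^2_h(\Omega_h)}\,\|u\|_{H^1_h(\omega_h(\varrho,3\varrho))}$, and \emph{this} is exactly what Proposition~\ref{prou} controls in terms of $e^{-\alpha_1\tau}\|u\|_{H^1_h(\Omega_h)}+e^{\alpha_2\tau}|\partial_n u|_{L^2_h(\Gamma_h)}$. From here the rest of your outline (CGO remainder bounds, $|\partial_n u|_{L^2_h(\Gamma_h)}\le C\delta^{1/2}e^a a^2$ via interpolation and the $H^2$ regularity of Proposition~\ref{regu}, choice $\tau=\widetilde\gamma a$, splitting in $|\xi|\le\rho$ vs.\ $|\xi|>\rho$, and optimization giving $\alpha=r/(2r+d)$) matches the paper.
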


\begin{remark}
The stability estimate (\ref{25})  is influenced by $\epsilon_d, \epsilon_a, h$ and the error in the DtN maps.  The term $| \ln(error)|^{-2\alpha}$ on the right-hand side of (\ref{25})  is consistent with the stability estimate for the  continuous case (see e.g., \cite{alessandrini1988stable}). And the terms $\epsilon_d^{2\alpha}, \epsilon_a^{\alpha}, h^{\alpha}$ stem from the discrete setting of the problem.

On the other hand, unlike the stability results for the continuous case, Theorem \ref{theo} does not yield uniqueness. Indeed, if  $\Lambda_h[q_1]=\Lambda_h[q_2]$,  then for any $r>0$, we  have
\begin{align}\label{abc}
	|q_1-q_2|_{H_h^{-r}(\Omega_h)}\leq C\max\left\{\epsilon_d^{2\alpha}, \epsilon_a^{\alpha}, h^{\alpha}\right\}.
\end{align}
Consequently, given $h>0$, this does not
guarantee uniqueness, but rather some kind of asymptotic uniqueness as $h\rightarrow0$, since the right-hand side of (\ref{abc}) goes to zero as $h\rightarrow0$.
\end{remark}

\begin{remark}
In \cite{ervedoza2011uniform}, a discrete stability estimate is obtained by using full boundary data.
	For the partial boundary data case, \cite{lecaros2023discrete} prove a stability result for the Calder\'on problem for a suitable class of
potentials in a cube in $\mathbb R^d, d\geq3$,  when measurements are taken on the boundary  except for one side of the cube. To the best of our knowledge, our result  is the first stability estimate for the discrete Calder\'on problem with
 measurements  taken just on arbitrarily small portions of one side of the cube in $\mathbb R^d, d\geq3$.
\end{remark}

\section{Discrete Carleman estimate}
In this section, we establish a discrete Carleman estimate with boundary observation. Before the calculus, we introduce some formulas which will be used  in the following article.

\subsection{Discrete calculus  formulas}
 First, we provide the product rule for the average and the difference operators.
\begin{lemma}[{\cite[Lemma 2.1]{ervedoza2011uniform}}]\label{le21}
As operators from $\mathcal C(\overline{\mathcal W}_h)$ to $\mathcal C(\mathcal W_{h,kj}^*)$, we have the following identities:
$$A_kA_j=A_jA_k, \quad A_kD_j=D_jA_k, \quad D_kD_j=D_jD_k.$$
Moreover, for any $u,v \in\mathcal C(\overline{\mathcal W}_h)$, we have the following identities on $\mathcal W_{h,k}^*$:
\begin{align*}
D_k(uv)=D_kuA_kv+A_kuD_kv,\\
A_k(uv)=A_kuA_kv+\frac{h^2}{4}D_kuD_kv.	
\end{align*}
\end{lemma}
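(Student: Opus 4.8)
The plan is to reduce every identity to the defining formulas for the shift operators and verify them by direct algebraic expansion, since no summation, integration, or boundary behaviour enters---each assertion is a pointwise algebraic identity. First I would record that both $A_k$ and $D_k$ are linear combinations of the two elementary shifts along the $e_k$-axis,
\begin{align*}
A_k=\tfrac{1}{2}(\tau_k+\tau_{-k}),\qquad D_k=\tfrac{1}{h}(\tau_k-\tau_{-k}),
\end{align*}
where $\tau_{\pm k}u(x)=u(x\pm\tfrac{h}{2}e_k)$. Since $\tau_{\pm k}$ and $\tau_{\pm j}$ are translations of the grid (in orthogonal coordinate directions when $k\neq j$, and along a common axis when $k=j$), they commute: $\tau_a\tau_b=\tau_b\tau_a$ for all $a,b\in\{\pm k,\pm j\}$. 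Hence any two polynomials in these shifts commute, and in particular $A_kA_j=A_jA_k$, $A_kD_j=D_jA_k$, and $D_kD_j=D_jD_k$ as operators $\mathcal C(\overline{\mathcal W}_h)\to\mathcal C(\mathcal W_{h,kj}^*)$. The only thing to confirm is that the composite domains and codomains match, which is immediate from $\mathcal W_{h,kj}^*=(\mathcal W_{h,k}^*)_j^*$ together with the symmetry $\mathcal W_{h,kj}^*=\mathcal W_{h,jk}^*$ noted in Section 2.1.

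For the two product rules I would abbreviate $u_{\pm}:=\tau_{\pm k}u(x)$ and $v_{\pm}:=\tau_{\pm k}v(x)$, using that each shift is multiplicative, i.e. $\tau_{\pm k}(uv)(x)=u_{\pm}v_{\pm}$. Then $D_k(uv)(x)=h^{-1}(u_+v_+-u_-v_-)$, while
\begin{align*}
(D_ku)(A_kv)+(A_ku)(D_kv)=\frac{(u_+-u_-)(v_++v_-)+(u_++u_-)(v_+-v_-)}{2h}.
\end{align*}
Expanding the numerator, the mixed terms $u_+v_-$ and $u_-v_+$ cancel, leaving $2(u_+v_+-u_-v_-)$, which gives the first identity. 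Likewise $A_k(uv)(x)=\tfrac{1}{2}(u_+v_++u_-v_-)$, while
\begin{align*}
(A_ku)(A_kv)+\frac{h^2}{4}(D_ku)(D_kv)=\frac{(u_++u_-)(v_++v_-)+(u_+-u_-)(v_+-v_-)}{4},
\end{align*}
where the factor $h^2/4$ precisely cancels the $h^{-2}$ produced by the two difference quotients; after expansion the mixed terms cancel and one is left with $\tfrac{1}{2}(u_+v_++u_-v_-)$, yielding the second identity.

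I do not expect any genuine obstacle here beyond careful bookkeeping. The one subtlety worth flagging is domain consistency: one must check that all the evaluation points $x\pm\tfrac{h}{2}e_k$ (and, for the commutation identities, the doubly shifted points) lie in $\overline{\mathcal W}_h$, so that every term is defined; this is exactly what the construction of the stencils $\mathcal W_{h,k}^*$ and $\overline{\mathcal W}_h$ in Section 2.1 guarantees. I would present the commutation identities first, since they are used implicitly to align the operator domains, and then the two product rules.
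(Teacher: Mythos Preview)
Your argument is correct: both the commutation identities and the two product rules follow by direct pointwise expansion in terms of the shifts $\tau_{\pm k}$, exactly as you outline, and the domain bookkeeping is handled by the definitions in Section~2.1. Note that the paper itself does not supply a proof of this lemma but simply cites \cite[Lemma~2.1]{ervedoza2011uniform}; your elementary verification is precisely the standard one and nothing more is required.
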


As a direct consequence of Lemma \ref{le21}, we have the following Corollary.
\begin{corollary}\label{co22}
	For $u\in\mathcal C(\overline{\mathcal W}_h)$,
	$$A_k(u^2)=(A_ku)^2+\frac{h^2}{4}(D_ku)^2, \quad D_k(u^2)=2D_kuA_ku, \quad {\rm on}~\mathcal W_{h,k}^*.$$
	In particular, for all $u\in\mathcal C(\overline{\mathcal W}_h)$,
	$$(A_ku)^2\leq A_k(u^2), \quad (D_ku)^2\leq \frac{4}{h^2}A_k(u^2), \quad {\rm on}~\mathcal W_{h,k}^*.$$
	\end{corollary}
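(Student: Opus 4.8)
The plan is to obtain every assertion by specializing the product rules of Lemma~\ref{le21} to the case $v=u$ and then discarding nonnegative terms; no genuinely new computation is required.

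First I would set $v=u$ in the identity $D_k(uv)=D_kuA_kv+A_kuD_kv$, valid on $\mathcal W_{h,k}^*$. Since $A_ku$ and $D_ku$ are single functions on $\mathcal W_{h,k}^*$, the two summands coincide and one gets $D_k(u^2)=2D_kuA_ku$ at once. Likewise, setting $v=u$ in $A_k(uv)=A_kuA_kv+\frac{h^2}{4}D_kuD_kv$ yields $A_k(u^2)=(A_ku)^2+\frac{h^2}{4}(D_ku)^2$ on $\mathcal W_{h,k}^*$. This disposes of the two stated identities.

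For the two pointwise inequalities I would simply read them off the second identity above. At each point of $\mathcal W_{h,k}^*$, both terms on its right-hand side are nonnegative, being squares (times the positive constant $h^2/4$ in the second case); hence dropping $\frac{h^2}{4}(D_ku)^2$ gives $(A_ku)^2\leq A_k(u^2)$, while dropping $(A_ku)^2$ and multiplying through by $4/h^2$ gives $(D_ku)^2\leq\frac{4}{h^2}A_k(u^2)$. There is no real obstacle here: the statement is a one-line consequence of Lemma~\ref{le21}, and the only point requiring mild care is the bookkeeping of which grid each operator is evaluated on ($\mathcal W_{h,k}^*$ versus $\overline{\mathcal W}_h$), which is already recorded in the mapping properties of Lemma~\ref{le21}.
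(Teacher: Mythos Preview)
Your proof is correct and matches the paper's approach: the paper simply states the corollary as a direct consequence of Lemma~\ref{le21}, and you have spelled out exactly that derivation by specializing $v=u$ in the product rules and then dropping nonnegative squares.
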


 Then we state  the discrete integration by parts for the difference and average operators.
 \begin{lemma}[{\cite[Lemma 2.2]{lecaros2023discrete}}]\label{le28}
 	Given a  mesh $\mathcal W_h $ defined as above, for any $v\in\mathcal C(\mathcal W_{h,k}^*)$, $u\in\mathcal C(\overline{\mathcal W}_{h,k})$, we have
 	\begin{align}
 		&\int_{\mathcal W_h}uD_kv=-\int_{\mathcal W_{h,k}^*}vD_ku+\int_{\partial_k\mathcal W_h}ut_r^k(v)n_k,\label{dk}\\
 		&\int_{\mathcal W_h}uA_kv=\int_{\mathcal W_{h,k}^*}vA_ku-\frac{h}{2}\int_{\partial_k\mathcal W_h}ut_r^k(v)\label{ak}.
 	\end{align}
 \end{lemma}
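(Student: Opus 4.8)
The plan is to prove both identities by \emph{discrete summation by parts} (Abel summation), reducing each to a one–dimensional sum along lines parallel to $e_k$ and then carefully collecting the uncancelled endpoint contributions. Since both $D_k$ and $A_k$ only shift in the $e_k$ direction, the sum defining $\int_{\mathcal W_h}$ decomposes into independent one–dimensional sums over the points of $\mathcal W_h$ lying on a common line parallel to $e_k$, the transverse variables merely contributing the weight $h^{d-1}$ that appears in the boundary integrals. Each such line meets $\mathcal W_h$ in one or more contiguous segments, and the points immediately outside the ends of a segment (along $e_k$) lie in $\overline{\mathcal W}_{h,k}\setminus\mathcal W_h=\partial_k\mathcal W_h$; it therefore suffices to establish the corresponding one–dimensional identity on each segment and then reassemble the endpoint terms into a sum over $\partial_k\mathcal W_h$.

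For (\ref{dk}) I would start from the definitions of $D_k$ and of the discrete integral, writing
\begin{align*}
\int_{\mathcal W_h}uD_kv=h^{d-1}\sum_{x\in\mathcal W_h}u(x)\bigl[\tau_kv(x)-\tau_{-k}v(x)\bigr].
\end{align*}
Reindexing the two sums by the substitutions $y=x+\tfrac h2 e_k$ and $y=x-\tfrac h2 e_k$ respectively moves the shifts from $v$ onto $u$, giving
\begin{align*}
\int_{\mathcal W_h}uD_kv=h^{d-1}\Bigl[\sum_{y\in\tau_k(\mathcal W_h)}\tau_{-k}u(y)\,v(y)-\sum_{y\in\tau_{-k}(\mathcal W_h)}\tau_{k}u(y)\,v(y)\Bigr].
\end{align*}
Since $\mathcal W_{h,k}^*=\tau_k(\mathcal W_h)\cup\tau_{-k}(\mathcal W_h)$, the half–points belonging to \emph{both} shifted copies carry both contributions, which combine to $-v(y)[\tau_ku(y)-\tau_{-k}u(y)]=-h\,v(y)D_ku(y)$ and, summed with the prefactor, produce $-\int_{\mathcal W_{h,k}^*}vD_ku$. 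The half–points belonging to only one copy are exactly the extreme half–points of each $e_k$–segment, each adjacent to a unique point of $\partial_k\mathcal W_h$, and each leaves a single uncancelled term. Matching these against the definitions of the outward normal $n_k$ and the trace $t_r^k$ — which select, at $x\in\partial_k\mathcal W_h$, the value $\tau_{-k}v$ with sign $+1$ when $n_k(x)=1$ and the value $\tau_kv$ with sign $-1$ when $n_k(x)=-1$ — identifies the boundary leftover with $\int_{\partial_k\mathcal W_h}u\,t_r^k(v)n_k$, which is (\ref{dk}).

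The identity (\ref{ak}) follows from the identical reindexing applied to $A_kv=\tfrac12(\tau_kv+\tau_{-k}v)$; the only changes are that the two reindexed sums now \emph{add} rather than subtract, so the interior half–points reconstruct $\int_{\mathcal W_{h,k}^*}vA_ku$ (with no $1/h$ factor), and that, because $A_k$ carries no $1/h$, each endpoint term retains the extra weight $\tfrac h2$ and no sign from $n_k$, producing exactly the $-\tfrac h2\int_{\partial_k\mathcal W_h}u\,t_r^k(v)$ boundary term. The one genuinely delicate step, which I would treat carefully, is the boundary bookkeeping: checking, case by case on $n_k(x)=\pm1$, that the extreme half–points of each $e_k$–segment correspond bijectively to the points of $\partial_k\mathcal W_h$ and that the surviving values of $u$ and $v$ there agree with $t_r^k(v)$ and the prescribed sign. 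Everything else reduces to the two elementary reindexings above.
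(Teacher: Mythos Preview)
Your argument is correct: the reindexing $y=x\pm\tfrac h2 e_k$ transfers the shift from $v$ to $u$, the common half-points in $\tau_k(\mathcal W_h)\cap\tau_{-k}(\mathcal W_h)$ reconstitute the interior integral over $\mathcal W_{h,k}^*$, and the half-points lying in only one of $\tau_{\pm k}(\mathcal W_h)$ are in bijection with the points of $\partial_k\mathcal W_h$ via $x=y\mp\tfrac h2 e_k$, with the surviving value matching $u(x)\,t_r^k(v)(x)\,n_k(x)$ (respectively $-\tfrac h2\,u(x)\,t_r^k(v)(x)$ for the average). This is exactly the standard Abel-summation proof; the paper does not give its own argument but cites \cite{lecaros2023discrete}, where the same computation is carried out.
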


Finally we give the discrete version of Green's identity similarly to \cite{ervedoza2011uniform}.
\begin{proposition}\label{le29}
	 For all $u,v\in\mathcal C(\overline\Omega_h)$, we have
	\begin{align}\label{e23}
		\int_{\Omega_h}(\Delta_hu)v=-\sum_{k=1}^d\int_{\Omega_{h,k}^*}\sigma^kD_kuD_kv+\int_{\partial\Omega_h}(\partial_nu)v.
	\end{align}

\end{proposition}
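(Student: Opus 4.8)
The plan is to unfold the definition of $\Delta_h$, apply the discrete integration by parts formula \eqref{dk} from Lemma \ref{le28} once for each coordinate direction, and then reassemble the boundary terms into the normal derivative $\partial_n u$ of Definition \ref{deno}. Concretely, since $\Delta_h u = \sum_{k=1}^d D_k(\sigma^k D_k u)$ in $\Omega_h$, we write
\[
\int_{\Omega_h}(\Delta_h u)\,v = \sum_{k=1}^d \int_{\Omega_h} v\, D_k(\sigma^k D_k u).
\]
For each fixed $k$, I would apply \eqref{dk} with the roles $u \rightsquigarrow v$ and $v \rightsquigarrow \sigma^k D_k u$; note that $\sigma^k D_k u \in \mathcal C((\overline\Omega_h)_k^*) = \mathcal C(\Omega_{h,k}^*)$ (after restriction) and $v \in \mathcal C(\overline\Omega_{h,k})$, so the hypotheses of Lemma \ref{le28} are met. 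This yields
\[
\int_{\Omega_h} v\, D_k(\sigma^k D_k u) = -\int_{\Omega_{h,k}^*} \sigma^k D_k u\, D_k v + \int_{\partial_k\Omega_h} v\, t_r^k(\sigma^k D_k u)\, n_k.
\]

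The interior terms, summed over $k$, give exactly $-\sum_{k=1}^d \int_{\Omega_{h,k}^*} \sigma^k D_k u\, D_k v$, which matches the first term on the right-hand side of \eqref{e23}. It remains to identify the boundary terms: summing over $k$ produces $\sum_{k=1}^d \int_{\partial_k\Omega_h} v\, t_r^k(\sigma^k D_k u)\, n_k$, and I would compare this with $\int_{\partial\Omega_h}(\partial_n u)\,v = \int_{\partial\Omega_h} v \sum_{k=1}^d t_r^k(\sigma^k D_k u)\, n_k$. The matching step is where one has to be careful about the geometry: the sets $\partial_k\Omega_h$ (boundary in the $e_k$ direction) and $\partial\Omega_h$ are defined differently, so one must check that extending each $k$-th summand from $\partial_k\Omega_h$ to all of $\partial\Omega_h$ by zero (i.e., declaring $t_r^k(\sigma^k D_k u)\, n_k = 0$ at points of $\partial\Omega_h \setminus \partial_k\Omega_h$, consistent with $n_k$ vanishing there) does not change the value of the discrete boundary integral, owing to the factor $h^{d-1}$ versus $h^{d-1}$ normalization being the same on each piece. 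Granting this bookkeeping, $\sum_{k=1}^d \int_{\partial_k\Omega_h} v\, t_r^k(\sigma^k D_k u)\, n_k = \int_{\partial\Omega_h} v\, \partial_n u$, completing the proof.

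The main obstacle is purely the boundary combinatorics: one must verify that for $x \in \partial\Omega_h$, the "mixed" or "corner" points contribute correctly and that the union $\bigcup_k \partial_k\Omega_h$, with the zero-extension convention for $n_k$, reproduces the integral over $\partial\Omega_h$ without double-counting or omission. This is the discrete analogue of the fact that in the continuum the normal derivative picks up one coordinate's contribution on each face. I expect no analytic difficulty — it is a matter of carefully tracking which shifted grid points lie in which of the sets $\Omega_{h,k}^*$, $\partial_k^\pm\Omega_h$, and using the definitions of $n_k$ and $t_r^k$ to see that the summand for direction $k$ is supported exactly on $\partial_k\Omega_h$ and agrees there with the $k$-th term of $\partial_n u$.
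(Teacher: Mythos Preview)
Your approach is correct and in fact more direct than the paper's. You apply Lemma~\ref{le28} on $\Omega_h$ itself, obtaining the boundary terms over each $\partial_k\Omega_h$ immediately, and then identify their sum with $\int_{\partial\Omega_h}(\partial_n u)v$ via the zero-extension of $n_k$ outside $\partial_k\Omega_h$. The paper instead embeds $\overline\Omega_h$ into a larger grid $Q_h$, introduces extension-by-zero operators $I_h:\mathcal C(\Omega_{h,k}^*)\to\mathcal C(Q_{h,k}^*)$ and $\widetilde I_h:\mathcal C(\overline\Omega_h)\to\mathcal C(\overline Q_h)$, performs the integration by parts on $Q_h$ (where all boundary terms vanish because $\widetilde I_h v=0$ on $\partial Q_h$), and then recovers the boundary contribution by splitting $\int_{\overline\Omega_h}=\int_{\Omega_h}+h\int_{\partial\Omega_h}$ together with the pointwise identity $-h\sum_k D_k(I_h(\sigma^k D_k u))=\partial_n u$ on $\partial\Omega_h$. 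Your route is shorter and conceptually closer to the continuous Green identity; the paper's detour has the minor advantage of making explicit the relation \eqref{e22} between the extended discrete divergence and the normal derivative, which is sometimes useful on its own, but is not needed for the Green formula itself. Your remaining ``bookkeeping'' concern is exactly right and is handled the same way in both proofs: with $n_k$ extended by zero off $\partial_k\Omega_h$ (as the paper does explicitly), the summand $v\,t_r^k(\sigma^k D_k u)\,n_k$ is supported on $\partial_k\Omega_h$, and the corner/edge points of $\partial\Omega_h$ lie in no $\partial_k\Omega_h$, so $\partial_n u$ vanishes there and the two boundary integrals agree.
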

\begin{proof}
Choose a set $\overline Q_h$ satisfying $\overline\Omega_h\subset Q_h\subset\overline Q_h\subset h\mathbb Z^d$,
	  then $\Omega_{h,k}^*\subset Q_{h,k}^*$ so that we can define an operator $I_h$ from $\mathcal C(\Omega_{h,k}^*)$ to $\mathcal C(Q_{h,k}^*)$ that extends functions by zero outside $\Omega_{h,k}^*$. Extending $n_k$ by 0 outside $\partial_k\Omega$, for all $u\in\mathcal C(\overline\Omega_h)$,
	  we have
	\begin{align}\label{e22}
		-h\sum_{k=1}^dD_k(I_h(\sigma^kD_ku)) =\sum_{k=1}^dt_r^k(\sigma^kD_ku)n_k=\partial_nu\quad {\rm on}~\partial\Omega_h.
	\end{align}
	  The function $D_k(I_h(\sigma^kD_ku))$ belongs to  $\mathcal C(\overline Q_{h,k})$. Due to the inclusions
	$$\partial_k\Omega_h\subset\overline\Omega_{h,k}\subset\overline Q_{h,k},$$
	it makes sense to look at the values of $D_k(I_h(\sigma^kD_ku))$ on $\partial_k\Omega_h$. Set $x\in\partial_k\Omega_h$, then either $n_k(x)=-1$ or $1$. Since the two cases are treated the same way, we may suppose that $n_k(x)=1$, that is $x-\frac{h}{2}e_k\in\Omega_{h,k}^*$ and
	$x+\frac{h}{2}e_k\notin\Omega_{h,k}^*$. In this case, $(I_h(\sigma^kD_ku))(x+\frac{h}{2}e_k)=0$ and
	$$D_k(I_h(\sigma^kD_ku))=-\frac{1}{h}(\sigma^kD_ku)(x-\frac{h}{2}e_k)=-\frac{1}{h}t_r^k(\sigma^kD_ku),$$
	then we get (\ref{e22}).
	
	We now turn our attention to proving Green's formula. Denote by $\widetilde I_h$ the operator from $\mathcal C(\overline\Omega_h)$ to $\mathcal C(\overline Q_h)$ that extend functions by zero outside $\overline\Omega_h$. We have
	$$D_k(\widetilde I_hv)=D_kv\quad {\rm on} ~\Omega_{h,k}^*, \quad{\rm and}\quad I_h(\sigma^kD_ku)=0\quad {\rm outside}~\Omega_{h,k}^*.$$
	Hence
	$$\sum_{k=1}^d\int_{\Omega_{h,k}^*}\sigma^kD_kuD_kv=\sum_{k=1}^d\int_{Q_{h,k}^*}I_h(\sigma^kD_ku)D_k(\widetilde I_hv).$$
	Since $\overline\Omega_h\in Q_h$, $\widetilde I_hv=0$ on $\partial Q_h$, and then, performing the integration by parts (\ref{dk}),
	\begin{align*}
		\sum_{k=1}^d\int_{\Omega_{h,k}^*}\sigma^kD_kuD_kv&=-\sum_{k=1}^d\int_{Q_{h}}\widetilde I_hvD_k(I_h(\sigma^kD_ku))\\
		&=-\sum_{k=1}^d\int_{\overline\Omega_h}vD_k(I_h(\sigma^kD_ku))\\
		&=-\sum_{k=1}^d\int_{\Omega_h}vD_k(\sigma^kD_ku)-\sum_{k=1}^dh\int_{\partial\Omega_h}vD_k(I_h(\sigma^kD_ku))\\
		&=-\int_{\Omega_h}v\Delta_hu+\int_{\partial\Omega_h}v\partial_nu.
	\end{align*}	
In the last equality, we have used (\ref{e22}). This completes the proof.

\end{proof}

\subsection{Calculus results related to the weight functions}

  Our Carleman weight function is defined as $e^{s\varphi}$ for $s\geq 1$, with $\varphi=e^{-\lambda\psi}$,   $\lambda\geq1$, where $\psi$ is a continuous function. In this paper, we shall use the same  notation for the sample of the continuous function on the discrete sets.
We suppose that the function $\psi$ satisfies the following properties.
\begin{assumption}\label{as31}
Let $\widetilde\Omega$ be an open and connected bounded neighborhood of $\overline\Omega$ in $\mathbb R^d$ with smooth boundary. The function $\psi$ is  in $C^\infty(\overline{\widetilde\Omega})$, i.e. infinitely differentiable function,   and satisfies
$$|\nabla\psi|>0~ {\rm and}~
	\psi>0 ~{\rm in}~ \widetilde\Omega.$$
\end{assumption}
\begin{remark}
	We enlarge the open set $\Omega$ to a larger open set $\widetilde\Omega$ as this will guarantee that after performing some discrete operations in $\Omega$, $\psi$ is  still well-defined.
\end{remark}

In the sequel, $C$ will denote a generic constant independent of $h$, whose value may change from line to line. As usual, we shall denote by $O(1)$ a bounded function and by $O_\lambda(1)$ a bounded function  once $\lambda$ is fixed. We denoted by $O_\lambda(sh)$ the functions that verify $\|O_\lambda(sh)\|_{L_h^\infty(\Omega_h)}\leq C_\lambda sh$ for some constant $C_\lambda$ depending on $\lambda$.

We say that $\alpha$ is a multi-index if $\alpha=(\alpha_1,\dots,\alpha_d)\in \mathbb N^d$ and for $y=(y_1,\dots,y_d)\in\mathbb R^d$ we write:
$$|\alpha|=\alpha_1+\cdots+\alpha_d,\quad y^\alpha=y_1^{\alpha_1}\cdots y_d^{\alpha_d}.$$
Moreover, we denote by $\partial$ the differential operator in the continuous case. For $f=f(x_1, \dots,x_d)\in C^{|\alpha|}(\mathbb R^d)$, we write
$$\quad\partial^\alpha f=\partial^{\alpha_1}_1\cdots\partial^{\alpha_d}_df, $$
where we abbreviate $\partial_{x_i}^{\alpha_i}$ as $\partial_i^{\alpha_i}$, $i=1,\dots,d$.

We now provide some technical lemmas related to discrete operations performed on the Carleman weight functions that is of the form $e^{s\varphi}$ with $\varphi=e^{-\lambda\psi}$, $\psi\in C^\infty$. For concision, we set $r=e^{s\varphi}$ and $\rho=r^{-1}$. The positive parameters $s$ and $h$ will be large and small respectively and we are particularly interested in the dependence on $s,h$ and $\lambda$ in the following basic estimates.

We assume $s\geq 1$ and $\lambda\geq1$.  We shall use multi-index of the form $\alpha=(\alpha_1,\dots, \alpha_d)$. The proofs can be found in \cite{boyer2010discrete1d}, \cite{boyer2010discrete} and \cite{boyer2014carleman}.
\begin{lemma}[{\cite[Lemma 2.7]{boyer2010discrete}}]
\label{le32}
	Let $\alpha$ and $\beta$ be multi-indices. We have
	\begin{align*}
		\partial^\beta(r\partial^\alpha\rho)=&|\alpha|^{|\beta|}(-s\varphi)^{|\alpha|}\lambda^{|\alpha+\beta|}(-\nabla\psi)^{\alpha+\beta}\\
		&+|\alpha||\beta|(s\varphi)^{|\alpha|}\lambda^{|\alpha+\beta|-1} O(1)+s^{|\alpha|-1}|\alpha|(|\alpha|-1)O_\lambda(1)\\
		=&O_\lambda(s^{|\alpha|}).
	\end{align*}
	The same expressions hold with $r$ and $\rho$ interchanged and with $s$ changed into $-s$.
\end{lemma}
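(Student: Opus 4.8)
The plan is to reduce the statement to two elementary ingredients: a closed expansion of the \emph{continuous} derivatives $\partial^{\gamma}\varphi$ of the weight $\varphi=e^{-\lambda\psi}$, and a first-order recursion relating $r\partial^{\alpha+e_{m}}\rho$ to $r\partial^{\alpha}\rho$. The only analytic input is Assumption~\ref{as31}: since $\psi\in C^{\infty}(\overline{\widetilde\Omega})$, the function $\psi$ and all its partial derivatives are bounded on the set where the estimate is used, and hence so is $\varphi=e^{-\lambda\psi}$; everything else is combinatorics, together with the systematic use of $s,\lambda\ge1$ to absorb lower powers of $s$ and $\lambda$ into $O(1)$'s.

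First I would record the expansion of the pure continuous derivatives of $\varphi$. Because $\partial_{i}\varphi=-\lambda(\partial_{i}\psi)\varphi$, an easy induction on $|\gamma|$ gives, for $|\gamma|\ge1$,
\[
\partial^{\gamma}\varphi=\varphi\,\lambda^{|\gamma|}(-\nabla\psi)^{\gamma}+\varphi\sum_{j=1}^{|\gamma|-1}\lambda^{j}P_{\gamma,j},
\]
where each $P_{\gamma,j}$ is a universal polynomial in the partial derivatives of $\psi$ of order $\le|\gamma|$, hence a $\lambda$-independent bounded function; using $\lambda\ge1$, the last sum is $\varphi\,\lambda^{|\gamma|-1}O(1)$. (This can also be read off from Fa\`a di Bruno's formula.)

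Next, set $w_{\alpha}:=r\partial^{\alpha}\rho$. Differentiating the identities $\partial^{\alpha}\rho=\rho\,w_{\alpha}$ and $\partial_{m}\rho=-s(\partial_{m}\varphi)\rho$ yields $w_{0}=1$ and $w_{\alpha+e_{m}}=-s(\partial_{m}\varphi)\,w_{\alpha}+\partial_{m}w_{\alpha}$. I would then prove the lemma by induction on $|\alpha|$, the inductive claim being the displayed three-term formula for $\partial^{\beta}w_{\alpha}$ \emph{for every multi-index} $\beta$. The base case $|\alpha|=0$ is $\partial^{\beta}1=\delta_{\beta,0}$, which matches the formula (with $0^{0}=1$). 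For the inductive step, apply $\partial^{\beta}$ to the recursion and expand by the Leibniz rule:
\[
\partial^{\beta}w_{\alpha+e_{m}}=-s\sum_{\beta'\le\beta}\binom{\beta}{\beta'}\bigl(\partial^{\beta'+e_{m}}\varphi\bigr)\,\partial^{\beta-\beta'}w_{\alpha}+\partial^{\beta+e_{m}}w_{\alpha}.
\]
Substituting the expansion of $\partial^{\beta'+e_{m}}\varphi$ and the inductive formula for $\partial^{\beta-\beta'}w_{\alpha}$, the contribution of top order $s^{|\alpha|+1}\lambda^{|\alpha+e_{m}+\beta|}$ comes out with combinatorial coefficient $\sum_{\beta'\le\beta}\binom{\beta}{\beta'}|\alpha|^{|\beta-\beta'|}=(1+|\alpha|)^{|\beta|}=|\alpha+e_{m}|^{|\beta|}$, reproducing exactly the first term $|\alpha+e_{m}|^{|\beta|}(-s\varphi)^{|\alpha+e_{m}|}\lambda^{|\alpha+e_{m}+\beta|}(-\nabla\psi)^{\alpha+e_{m}+\beta}$; the remaining $s^{|\alpha|+1}$-contributions lose at least one power of $\lambda$ and, by $\lambda\ge1$, are bounded by the second term $|\alpha+e_{m}|\,|\beta|\,(s\varphi)^{|\alpha+e_{m}|}\lambda^{|\alpha+e_{m}+\beta|-1}O(1)$ — the factor $|\beta|$ appearing because such contributions require either $\beta'\ne0$ or the sub-leading part of $w_{\alpha}$, hence vanish when $\beta=0$; and the last summand $\partial^{\beta+e_{m}}w_{\alpha}$, which carries only $s^{|\alpha|}$, together with the $s\cdot(\text{third term of }w_{\alpha})$ contribution, is (using $s\ge1$) of the form $s^{|\alpha+e_{m}|-1}|\alpha+e_{m}|(|\alpha+e_{m}|-1)\,O_{\lambda}(1)$, present only for $|\alpha+e_{m}|\ge2$, which is the third term. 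Since $\varphi$, $\nabla\psi$ and the $P_{\gamma,j}$ are $O(1)$ and $\lambda$ is fixed, each of the three terms is $O_{\lambda}(s^{|\alpha|})$, which is the last line. The variants with $r$ and $\rho$ interchanged, and with $s\mapsto-s$, follow from the identical computation upon using $\partial_{m}r=s(\partial_{m}\varphi)r$.

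The argument uses no idea beyond the Leibniz rule and the multi-index identity $\sum_{\beta'\le\beta}\binom{\beta}{\beta'}k^{|\beta-\beta'|}=(1+k)^{|\beta|}$; the one place where care is genuinely needed — and where a slip would most easily occur — is the bookkeeping in the inductive step, where one must simultaneously track the power of $s$, the power of $\lambda$ and the \emph{explicit} (non-asymptotic) factor $\varphi$, and in particular verify that at the dominant order $s^{|\alpha|}\lambda^{|\alpha+\beta|}$ the surviving combinatorial coefficient collapses to precisely $|\alpha|^{|\beta|}$ — not merely to $O(1)$ — so that the stated leading term is reproduced on the nose, while everything of strictly lower order in $s$ or in $\lambda$ gets absorbed (thanks to $s,\lambda\ge1$) into the second and third terms with exactly their stated vanishing factors $|\alpha||\beta|$ and $|\alpha|(|\alpha|-1)$.
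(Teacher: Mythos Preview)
The paper does not give its own proof of this lemma: it is quoted verbatim from \cite[Lemma~2.7]{boyer2010discrete}, and the paper simply states that ``The proofs can be found in \cite{boyer2010discrete1d}, \cite{boyer2010discrete} and \cite{boyer2014carleman}.'' Your proposal therefore does more than the paper itself, supplying a self-contained argument.

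Your argument is correct. The two ingredients --- the expansion $\partial^{\gamma}\varphi=\varphi\lambda^{|\gamma|}(-\nabla\psi)^{\gamma}+\varphi\lambda^{|\gamma|-1}O(1)$ and the recursion $w_{\alpha+e_m}=-s(\partial_m\varphi)w_\alpha+\partial_m w_\alpha$ for $w_\alpha=r\partial^\alpha\rho$ --- are exactly the right ones, and the induction on $|\alpha|$ with the Leibniz expansion and the binomial identity $\sum_{\beta'\le\beta}\binom{\beta}{\beta'}|\alpha|^{|\beta-\beta'|}=(1+|\alpha|)^{|\beta|}$ reproduces the leading term on the nose. Your check that the sub-leading $s^{|\alpha|+1}$-contributions vanish when $\beta=0$ (so that the factor $|\beta|$ in the second term is legitimate) and that all $s^{|\alpha|}$-contributions fit into the third term (vanishing when $|\alpha|\le1$) is also correct. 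This is in fact the same strategy that the cited reference \cite{boyer2010discrete} uses, so there is no genuine methodological difference to report.
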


\begin{lemma}[{\cite[Corollary 2.8]{boyer2010discrete}}]\label{le33}
	Let $\alpha, \beta$ and $\delta$ be multi-indices. We have
	\begin{align*}
		\partial^\delta(r^2(\partial^\alpha\rho)\partial^\beta\rho)=&|\alpha+\beta|^{|\delta|}(-s\varphi)^{|\alpha+\beta|}\lambda^{|\alpha+\beta+\delta|}(-\nabla\psi)^{\alpha+\beta+\delta}\\
		&+|\delta||\alpha+\beta|(s\varphi)^{|\alpha+\beta|}\lambda^{|\alpha+\beta+\delta|-1}O(1)\\&+s^{|\alpha+\beta|-1}(|\alpha|(|\alpha|-1)+|\beta|(|\beta|-1))O_\lambda(1)\\
		=&O_\lambda(s^{|\alpha+\beta|}).
	\end{align*}
\end{lemma}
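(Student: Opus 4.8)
The statement to be proved is Lemma~\ref{le33}, the differentiation formula for $\partial^\delta\bigl(r^2(\partial^\alpha\rho)\,\partial^\beta\rho\bigr)$. The natural approach is to reduce it to Lemma~\ref{le32}, which already handles a single factor $r\partial^\alpha\rho$. First I would write $r^2(\partial^\alpha\rho)\partial^\beta\rho = (r\partial^\alpha\rho)(r\partial^\beta\rho)$, so that the quantity of interest is a product of two factors to which Lemma~\ref{le32} applies with $\beta$ replaced by the empty multi-index. Then I would expand $\partial^\delta$ of this product by the Leibniz rule:
\begin{align*}
\partial^\delta\bigl((r\partial^\alpha\rho)(r\partial^\beta\rho)\bigr)=\sum_{\delta'+\delta''=\delta}\binom{\delta}{\delta'}\,\partial^{\delta'}(r\partial^\alpha\rho)\,\partial^{\delta''}(r\partial^\beta\rho).
\end{align*}

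\textbf{Main term extraction.} The leading term should come from the piece of the Leibniz sum where all derivatives land on the factor carrying the larger order, but to get the clean symmetric formula stated it is better to combine the two leading contributions. From Lemma~\ref{le32}, the principal part of $\partial^{\delta'}(r\partial^\alpha\rho)$ is $|\alpha|^{|\delta'|}(-s\varphi)^{|\alpha|}\lambda^{|\alpha+\delta'|}(-\nabla\psi)^{\alpha+\delta'}$, and similarly for the other factor. Multiplying the two principal parts and summing over $\delta'+\delta''=\delta$, I would use the binomial-type identity
\begin{align*}
\sum_{\delta'+\delta''=\delta}\binom{\delta}{\delta'}|\alpha|^{|\delta'|}|\beta|^{|\delta''|}=(|\alpha|+|\beta|)^{|\delta|}=|\alpha+\beta|^{|\delta|},
\end{align*}
(applied componentwise, since $(-\nabla\psi)^{\alpha+\delta'}(-\nabla\psi)^{\beta+\delta''}=(-\nabla\psi)^{\alpha+\beta+\delta}$ is independent of the split) to assemble the stated main term $|\alpha+\beta|^{|\delta|}(-s\varphi)^{|\alpha+\beta|}\lambda^{|\alpha+\beta+\delta|}(-\nabla\psi)^{\alpha+\beta+\delta}$. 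Here one must be slightly careful: each $\partial^{\delta'}$ acting on $\varphi$-powers can itself generate lower-order-in-$\lambda$ corrections, but those are exactly what gets absorbed into the second error term.

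\textbf{Error-term bookkeeping.} The remaining work is to show every other contribution fits into $|\delta||\alpha+\beta|(s\varphi)^{|\alpha+\beta|}\lambda^{|\alpha+\beta+\delta|-1}O(1)+s^{|\alpha+\beta|-1}\bigl(|\alpha|(|\alpha|-1)+|\beta|(|\beta|-1)\bigr)O_\lambda(1)$. These come from three sources: (i) cross terms in multiplying the ``$\lambda^{|\cdot|-1}O(1)$'' part of one Leibniz factor against the principal part of the other — these carry one fewer power of $\lambda$ and a combinatorial factor bounded by $|\delta||\alpha+\beta|$; (ii) the ``$s^{|\alpha|-1}|\alpha|(|\alpha|-1)O_\lambda(1)$'' type terms, which multiply against $O_\lambda(s^{|\beta|})$ to give $s^{|\alpha+\beta|-1}|\alpha|(|\alpha|-1)O_\lambda(1)$ (and symmetrically); (iii) products of two error pieces, which are of even lower order and are trivially dominated. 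One checks that the number of such terms in the finite Leibniz sum is controlled by constants depending only on $d$ and $|\delta|$, which may be absorbed into $O(1)$ and $O_\lambda(1)$. The final line $O_\lambda(s^{|\alpha+\beta|})$ is then immediate since each displayed term is at most of that size once $\lambda$ is fixed.

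\textbf{Expected obstacle.} There is no deep difficulty here — the lemma is a bookkeeping exercise — so the main nuisance is purely organizational: tracking the $\lambda$-degrees carefully through the Leibniz expansion so that the combinatorial coefficients genuinely collapse to the claimed $|\alpha+\beta|^{|\delta|}$ and $|\delta||\alpha+\beta|$ factors rather than merely to some unspecified constant, and making sure the $|\alpha|(|\alpha|-1)$ ``second-order in $s$'' defect is not accidentally inflated when the two factors are multiplied. Since the paper cites \cite{boyer2010discrete} for the proof, I would either reproduce this Leibniz computation in full or simply note that it follows verbatim from \cite[Corollary~2.8]{boyer2010discrete} applied with the substitution $r^2(\partial^\alpha\rho)\partial^\beta\rho=(r\partial^\alpha\rho)(r\partial^\beta\rho)$ together with Lemma~\ref{le32}.
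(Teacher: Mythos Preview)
The paper does not give its own proof of this lemma; it merely cites \cite[Corollary~2.8]{boyer2010discrete} and states that the proofs of Lemmas~\ref{le32}--\ref{le35} can be found in \cite{boyer2010discrete1d}, \cite{boyer2010discrete}, \cite{boyer2014carleman}. Your proposal is correct and is precisely the standard argument used in those references: factor $r^2(\partial^\alpha\rho)\partial^\beta\rho=(r\partial^\alpha\rho)(r\partial^\beta\rho)$, apply the Leibniz rule, invoke Lemma~\ref{le32} on each factor, and collapse the leading coefficients via the multi-index binomial identity you wrote down.
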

\begin{lemma}[{\cite[Proposition 2.9]{boyer2010discrete}}]\label{le34}
	Let $\alpha$ be a multi-index. Let $k,j=1,\dots,d$, provided $sh\leq \mathcal R$, we have
	\begin{align*}
rA_k^p\rho&=1+O_{\lambda, \mathcal R}((sh)^2)=O_{\lambda, \mathcal R}(1), \quad p=1,2,\\
		rA_k^pD_k\rho &=r\partial_k\rho+sO_{\lambda,\mathcal R}((sh)^2)=sO_{\lambda,\mathcal R}(1),\quad p=0,1,\\
		rD_k^{p_k}D_j^{p_j}\rho&=r\partial_k^{p_k}\partial_j^{p_j}\rho+s^2O_{\lambda,\mathcal R}((sh)^2)=s^2O_{\lambda,\mathcal R}(1), \quad p_k+p_j\leq2.
	\end{align*}
	The same estimates hold with $\rho$ and $r$ interchanged.
\end{lemma}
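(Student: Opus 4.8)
The plan is to treat each quantity as a finite, sign‑weighted sum of ratios $\rho(x+\text{shift})/\rho(x)$, to regard such a ratio as a $C^\infty$ function of the real shift parameter, and then to apply Taylor's formula with Lagrange remainder in that parameter. Parity of the relevant combination — even for averages and second differences, odd for first differences — is what produces the extra order of smallness responsible for the gain from $sh$ to $(sh)^2$, while the Lagrange remainder will be estimated using only $\varphi\in C^\infty(\overline{\widetilde\Omega})$ from Assumption~\ref{as31}; no analyticity is needed, and the enlargement $\widetilde\Omega\supset\overline\Omega$ guarantees that every shifted point stays in $\widetilde\Omega$ once $h$ is small, so all derivatives of $\varphi$ that appear are bounded.

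For the one‑dimensional operators I would fix $x$ in the relevant mesh and set $z(t):=-s[\varphi(x+te_k)-\varphi(x)]$, so that $\rho(x+te_k)/\rho(x)=e^{z(t)}$ by $r\rho\equiv 1$. On the range $|t|=O(h)$ that occurs one has $|z(t)|\le C_\lambda s|t|\le C_\lambda\mathcal R$, hence $e^{z(t)}=O_{\lambda,\mathcal R}(1)$; moreover $z^{(\ell)}(t)=-s\,\partial_k^\ell\varphi(x+te_k)=sO_\lambda(1)$, so $\partial_t^\ell\big(e^{z(t)}\big)=e^{z(t)}\times(\text{polynomial in }z',\dots,z^{(\ell)})=O_{\lambda,\mathcal R}(s^\ell)$, its leading contribution being $e^{z}(z')^\ell$. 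Then I would write $rA_k^p\rho$ as $\tfrac12 H(h/2)$ (for $p=1$) or $\tfrac14\big(2+H(h)\big)$ (for $p=2$) with $H(t):=e^{z(t)}+e^{z(-t)}$; since $H$ is even, $H(t)=2+\tfrac12 H''(\eta)t^2=2+O_{\lambda,\mathcal R}(s^2t^2)$, giving $rA_k^p\rho=1+O_{\lambda,\mathcal R}((sh)^2)$. I would write $rA_k^pD_k\rho$ as $\tfrac1h G(h/2)$ (for $p=0$) or $\tfrac1{2h}G(h)$ (for $p=1$) with $G(t):=e^{z(t)}-e^{z(-t)}$; since $G$ is odd, $G(t)=G'(0)t+\tfrac16 G'''(\eta)t^3$ with $G'(0)=2z'(0)=2r\partial_k\rho$ and $|G'''|=O_{\lambda,\mathcal R}(s^3)$, so $rA_k^pD_k\rho=r\partial_k\rho+O_{\lambda,\mathcal R}(s^3h^2)=r\partial_k\rho+sO_{\lambda,\mathcal R}((sh)^2)$. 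For $rD_k^{p_k}D_j^{p_j}\rho$ with $p_k+p_j=2$ and $j=k$ I would use $g(t):=e^{z(t)}+e^{z(-t)}-2$, even with $g(0)=g'(0)=g'''(0)=0$, $\tfrac12 g''(0)=z''(0)+z'(0)^2=r\partial_k^2\rho$ and $|g^{(4)}|=O_{\lambda,\mathcal R}(s^4)$, whence $\tfrac1{h^2}g(h)=r\partial_k^2\rho+O_{\lambda,\mathcal R}(s^4h^2)=r\partial_k^2\rho+s^2O_{\lambda,\mathcal R}((sh)^2)$; the mixed case $j\ne k$ is the two‑variable analogue with
\[
\Phi(t,t'):=e^{W(t,t')}-e^{W(t,-t')}-e^{W(-t,t')}+e^{W(-t,-t')},\qquad W(t,t'):=-s\big[\varphi(x+te_k+t'e_j)-\varphi(x)\big],
\]
which is odd in each of $t$ and $t'$, so its Taylor polynomial of order $\le 3$ reduces to $\partial_t\partial_{t'}\Phi(0,0)\,tt'=4\,r\partial_k\partial_j\rho\,tt'$ and the order‑$4$ remainder is $O_{\lambda,\mathcal R}(s^4h^4)$, giving $rD_kD_j\rho=r\partial_k\partial_j\rho+s^2O_{\lambda,\mathcal R}((sh)^2)$. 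In every line the last (coarser) bound then follows from $sh\le\mathcal R$ together with $r\partial_k^{p_k}\partial_j^{p_j}\rho=O_\lambda(s^{p_k+p_j})$ (Lemma~\ref{le32}), and the remaining cases $p_k+p_j\le 1$ are covered — with room to spare, since $s\ge 1$ — by the $D_k$ computation. The statement with $r$ and $\rho$ interchanged I would obtain verbatim by replacing $\varphi$ with $-\varphi$, equivalently $s$ with $-s$, which affects none of the absolute‑value estimates.

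The one genuinely delicate point is the control of the Lagrange remainders: one has to verify that $\partial_t^{\nu+2}\big(e^{z(t)}\big)$ (and its bivariate analogue), with $\nu\in\{0,1,2\}$ the order of the discrete operator, is $O_{\lambda,\mathcal R}(s^{\nu+2})$ uniformly on a $t$‑range of size $O(h)$, and that dividing by $h^\nu$ turns the remainder $s^{\nu+2}h^{\nu+2}$ into precisely $s^\nu(sh)^2$ — together with the identification of the relevant low‑order derivatives of $H,G,g,\Phi$ at $0$ with the continuous symbols $r\partial^\alpha\rho$. This bookkeeping is exactly the computation carried out in \cite{boyer2010discrete1d,boyer2010discrete,boyer2014carleman}, and I would relegate the detailed calculation to the appendix.
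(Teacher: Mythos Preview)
Your proposal is correct and follows essentially the same route as the cited references \cite{boyer2010discrete1d,boyer2010discrete,boyer2014carleman} to which the paper defers the proof: one writes each discrete expression as a finite combination of ratios $\rho(x+te_k)/\rho(x)=e^{z(t)}$, exploits the parity of the resulting combinations to kill the low odd (resp.\ even) Taylor coefficients, and controls the Lagrange remainder via $\partial_t^\ell e^{z(t)}=O_{\lambda,\mathcal R}(s^\ell)$. Your identification of the surviving Taylor coefficients with $r\partial^\alpha\rho$ and the bookkeeping $s^{\nu+2}h^{\nu+2}/h^\nu=s^\nu(sh)^2$ are exactly the mechanism used there, so nothing new is needed.
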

\begin{lemma}[{\cite[Proposition 2.12]{boyer2010discrete}}]\label{lexx}
	Let $p\in\mathbb N$, $k,j=1,\dots,d$, provided $sh\leq \mathcal R$, we have
	\begin{align*}
	D_k^{p}(rD_k^2\rho)&=s^2O_{\lambda,\mathcal R}(1),\\
		D_k^{p}(rA_k^2\rho)&=O_{\lambda,\mathcal R}((sh)^2).
	\end{align*}
	The same estimates hold with $r$ and $\rho$ interchanged.
\end{lemma}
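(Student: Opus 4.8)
The plan is to read off both estimates from a closed-form expression in which $rD_k^2\rho$ and $rA_k^2\rho$ appear as the restrictions to the grid of explicit $C^\infty$ functions of $x$, and then to replace discrete differences by continuous ones using the second-order consistency of centred finite differences. First, with $r=e^{s\varphi}$ and $\rho=e^{-s\varphi}$, a direct evaluation of the discrete second difference and second average yields, at every grid point $x$,
$$(rD_k^2\rho)(x)=\frac{e^{-a_+(x)}+e^{-a_-(x)}-2}{h^2},\qquad (rA_k^2\rho)(x)=1+\frac{e^{-a_+(x)}+e^{-a_-(x)}-2}{4},$$
where $a_\pm(x):=s\bigl(\varphi(x\pm he_k)-\varphi(x)\bigr)$. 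Since $\varphi=e^{-\lambda\psi}\in C^\infty(\overline{\widetilde\Omega})$ by Assumption~\ref{as31}, the right-hand sides are genuine $C^\infty$ functions of $x$ on a fixed neighbourhood $\Omega'$ of $\overline\Omega$ with $\overline{\Omega'}\subset\widetilde\Omega$ (this is precisely why $\Omega$ was enlarged to $\widetilde\Omega$). For $p=0$ the first identity is already Lemma~\ref{le34}, and the second estimate only holds for $p\ge1$ since $rA_k^2\rho=1+O((sh)^2)$; so I take $p\ge1$ in what follows.

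Next I would Taylor-expand the phase: $a_\pm(x)=sh\,b_\pm(x;h)$ with $b_\pm(x;h)=\pm\partial_k\varphi(x)+\tfrac h2\partial_k^2\varphi(x)\pm\cdots$, each $b_\pm(\cdot;h)$ bounded in $C^m(\Omega')$ uniformly in $h\in(0,\mathcal R]$ with constants depending only on $\lambda$ and $m$. The structural point is that the odd Taylor coefficients cancel in the sum, so $b_+^n+b_-^n$ carries an extra factor $h$ whenever $n$ is odd; in particular $b_++b_-=h\,c(x;h)$ with $c$ again uniformly $C^m$-bounded. Expanding the two exponentials then gives
$$rD_k^2\rho=\sum_{n\ge1}\frac{(-1)^n}{n!}\,s^2(sh)^{n-2}\bigl(b_+^n+b_-^n\bigr)=:\sum_{n\ge1}g_n,\qquad rA_k^2\rho=1+\frac14\sum_{n\ge1}\frac{(-1)^n}{n!}\,(sh)^n\bigl(b_+^n+b_-^n\bigr)=:1+\sum_{n\ge1}\tilde g_n,$$
each $g_n,\tilde g_n$ a $C^\infty$ function of $x$. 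Using $s\ge1$, $sh\le\mathcal R$ (hence $h\le\mathcal R$), and the extra factor of $h$ for odd $n$ — which is exactly what tames the $n=1$ term $g_1=-s\,c$ and the $n=3$ term — one checks, for every fixed $m$, bounds of the form $\|g_n\|_{C^m(\Omega')}\le K_{\lambda,m,\mathcal R}^n\,s^2/n!$ and $\|\tilde g_n\|_{C^m(\Omega')}\le K_{\lambda,m,\mathcal R}^n\,(sh)^2/n!$. Summing these absolutely convergent series shows that $rD_k^2\rho$, respectively $rA_k^2\rho-1$, is the restriction to the grid of a function whose $C^m(\Omega')$-norm is at most $C_{\lambda,m,\mathcal R}\,s^2$, respectively $C_{\lambda,m,\mathcal R}\,(sh)^2$, for every $m$; the $n=1,2$ terms moreover reproduce the leading part $s^2(\partial_k\varphi)^2-s\partial_k^2\varphi$ already visible through Lemma~\ref{le32}, consistently with Lemma~\ref{le34}.

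Finally I would invoke the elementary consistency estimate: for $f\in C^{p+2}$ on a neighbourhood of a grid point, the $p$-fold centred difference satisfies $D_k^pf=\partial_k^pf+O\bigl(h^2\|\partial_k^{p+2}f\|_{L^\infty}\bigr)$, hence $\|D_k^pf\|_{L_h^\infty}\le C_p\|f\|_{C^{p+2}}$ once $h\le1$. Applying this with $f=rD_k^2\rho$ gives $\|D_k^p(rD_k^2\rho)\|_{L_h^\infty}\le C_{\lambda,p,\mathcal R}\,s^2$, that is $D_k^p(rD_k^2\rho)=s^2O_{\lambda,\mathcal R}(1)$; applying it with $f=rA_k^2\rho-1$ and using $D_k^p(\text{const})=0$ gives $\|D_k^p(rA_k^2\rho)\|_{L_h^\infty}\le C_{\lambda,p,\mathcal R}\,(sh)^2$. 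These are the two asserted bounds, and the statement with $r$ and $\rho$ interchanged follows verbatim upon replacing $s$ by $-s$, which merely turns $a_\pm$ into $-a_\pm$. I expect the only genuine difficulty to be the uniform-in-$h$ bookkeeping of the power-series tails in the middle step, and specifically the observation that the apparent loss of a power of $s$ when $D_k$ meets the $h^{-2}$ prefactor is fictitious — it cancels because $b_+^n+b_-^n$ vanishes to the right order in $h$. A more computational alternative would be an induction on $p$ based on the Leibniz rule $D_k(uv)=(D_ku)(A_kv)+(A_ku)(D_kv)$ with $u=r$, $v=D_k^2\rho$; but that route forces one to exhibit by hand the cancellation of the two order-$s^3$ contributions in $(D_kr)(A_kD_k^2\rho)+(A_kr)(D_k^3\rho)$, whereas the closed-form expansion above makes this automatic.
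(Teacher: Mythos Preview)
The paper does not give its own proof of this lemma; it is quoted from \cite[Proposition~2.12]{boyer2010discrete}. Your strategy---recognising $rD_k^2\rho$ and $rA_k^2\rho-1$ as restrictions to the grid of explicit $C^\infty$ functions of $x$, bounding their $C^m$ norms uniformly in $h$, then invoking $\|D_k^p f\|_{L_h^\infty}\le \|\partial_k^p f\|_{L^\infty}$---is correct and is in the same spirit as the arguments in the cited reference.

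Two small comments. Your observation that the second estimate cannot hold for $p=0$ is right (Lemma~\ref{le34} gives $rA_k^2\rho=1+O((sh)^2)$); the statement should be read with $p\ge1$ in the second line, and that is how it is used downstream, e.g.\ in Lemma~\ref{leA5}. And the power-series bookkeeping you flag as the one delicate point can be bypassed entirely: writing $G(t)=e^{-s(\varphi(x+te_k)-\varphi(x))}$ and using the integral form of the second difference,
\[
(rD_k^2\rho)(x)=h^{-2}\!\int_{-h}^{h}(h-|t|)\,G''(t)\,dt,\qquad G''(t)=\bigl[s^2(\partial_k\varphi)^2-s\partial_k^2\varphi\bigr](x+te_k)\,G(t),
\]
one notes that each $x$-derivative of $G(t)$ produces a factor $-s\bigl(\partial_j\varphi(x+te_k)-\partial_j\varphi(x)\bigr)=O_\lambda(sh)=O_{\lambda,\mathcal R}(1)$, giving $\|\partial_x^\alpha G''(t)\|_{L^\infty}\le C_{\lambda,|\alpha|,\mathcal R}\,s^2$ for $|t|\le h$ directly, with no series tail to control. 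Since you already observe $rA_k^2\rho-1=\tfrac{h^2}{4}\,rD_k^2\rho$, the second estimate (for $p\ge1$) is then immediate from the first.
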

\begin{lemma}[{\cite[Proposition 2.13]{boyer2010discrete}}]\label{le35}
	Let $\alpha$ be a multi-index, $k,j=1,\dots,d$ and $p_k, p_k',p_j,p_j'\in \mathbb N$. For $p_k+p_j\leq2$, provided $sh\leq\mathcal R$ we have
	\begin{align*}
		&A_k^{p_k'}A_j^{p_j'}D_k^{p_k}D_j^{p_j}(r^2A_kD_k\rho D_j^2\rho)=\partial_k^{p_k}\partial_j^{p_j}(r^2(\partial_k\rho)\partial_j^2\rho)+s^3O_{\lambda,\mathcal R}((sh)^2)=s^3O_{\lambda,\mathcal R}(1),\\
		&A_k^{p_k'}A_j^{p_j'}D_k^{p_k}D_j^{p_j}(r^2A_jD_j\rho A_k^2\rho)=\partial_k^{p_k}\partial_j^{p_j}(r\partial_j\rho)+sO_{\lambda,\mathcal R}((sh)^2)=sO_{\lambda,\mathcal R}(1).
	\end{align*}
\end{lemma}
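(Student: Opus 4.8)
The plan is to deduce both identities from the elementary weight estimates of Lemmas~\ref{le32}--\ref{lexx} together with the discrete Leibniz rules and commutation relations of Lemma~\ref{le21}; this is the route taken in \cite{boyer2010discrete}, so I only outline it. The two identities are structurally the same: a compound operator $\mathcal D:=A_k^{p_k'}A_j^{p_j'}D_k^{p_k}D_j^{p_j}$ with $p_k+p_j\le 2$ is applied to the product of $r^2$ with a second-order factor in $\rho$ ($D_j^2\rho$, resp. $A_k^2\rho$) and a first-order factor ($A_kD_k\rho$, resp. $A_jD_j\rho$). I would treat the first identity in detail and then indicate why the second is even simpler.

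The mechanism behind all such estimates is that $r$ and $\rho=r^{-1}$, although exponentially large and small, always occur \emph{balanced}: since $r(x)\rho(x')=e^{s(\varphi(x)-\varphi(x'))}=1+O_\lambda(sh)$ whenever $|x-x'|=O(h)$, any product in which each factor of $\rho$ is matched by a factor of $r$ is, after discrete Taylor expansion, a bounded function times a power of $s$, with a remainder expandable in powers of $sh$ (and $sh\le\mathcal R$ is bounded). Concretely, I would first regroup $r^2A_kD_k\rho\, D_j^2\rho=(rA_kD_k\rho)(rD_j^2\rho)$. By Lemma~\ref{le34} the first factor is $r\partial_k\rho+sO_{\lambda,\mathcal R}((sh)^2)$ and the second is $r\partial_j^2\rho+s^2O_{\lambda,\mathcal R}((sh)^2)$, while $r\partial_k\rho=O_\lambda(s)$ and $r\partial_j^2\rho=O_\lambda(s^2)$ by Lemma~\ref{le32}; hence in the base case $p_k=p_j=p_k'=p_j'=0$ the product equals $r^2(\partial_k\rho)\partial_j^2\rho+s^3O_{\lambda,\mathcal R}((sh)^2)$, which is the claimed expansion.

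For general $\mathcal D$ I would apply it to the product and expand using $D_k(uv)=D_ku\,A_kv+A_ku\,D_kv$ and $A_k(uv)=A_ku\,A_kv+\tfrac{h^2}{4}D_ku\,D_kv$ together with the analogous identities in direction $j$, pushing averages through differences via $A_kD_j=D_jA_k$. This produces a finite linear combination with bounded integer coefficients of products of balanced atoms, each obtained from $r$, $A_kD_k\rho$ or $D_j^2\rho$ by applying a bounded number of average and difference operators. Each atom is governed by Lemmas~\ref{le34}, \ref{lexx} and their companions in \cite{boyer2010discrete}: it equals the corresponding continuous quantity plus an error of order $(sh)^2$ relative to its leading power of $s$, and the $\tfrac{h^2}{4}$-corrections coming from the second Leibniz rule only generate further errors of this type, never raising the $s$-power. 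Collecting the products of the continuous leading parts reassembles, by Lemma~\ref{le33} with $\alpha=e_k$, $\beta=2e_j$ and $\delta=p_ke_k+p_je_j$, into $\partial_k^{p_k}\partial_j^{p_j}(r^2(\partial_k\rho)\partial_j^2\rho)$, while the errors sum to $s^3O_{\lambda,\mathcal R}((sh)^2)$; since $r^2=O_\lambda(1)$ and the two $\rho$-factors contribute at most $s$ and $s^2$ respectively, the whole expression is $s^3O_{\lambda,\mathcal R}(1)$, which is the first line. For the second line one only replaces $D_j^2\rho$ by $A_k^2\rho$: by Lemma~\ref{le34} (and Lemma~\ref{lexx}) $rA_k^2\rho=1+O_{\lambda,\mathcal R}((sh)^2)$ contributes no power of $s$, so regrouping $r^2A_jD_j\rho\, A_k^2\rho=(rA_jD_j\rho)(rA_k^2\rho)$ and running the same expansion leaves $r\partial_j\rho=O_\lambda(s)$ as the leading object, reassembling into $\partial_k^{p_k}\partial_j^{p_j}(r\partial_j\rho)$ plus $sO_{\lambda,\mathcal R}((sh)^2)$, i.e.\ $sO_{\lambda,\mathcal R}(1)$.

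The only genuine difficulty is bookkeeping, not ideas: one must verify, term by term in the iterated Leibniz expansion, that (i) the power of $s$ attached to each term never exceeds the claimed one, (ii) every error is of relative order $(sh)^2$ so that it is absorbed into $O_{\lambda,\mathcal R}((sh)^2)$ once $sh\le\mathcal R$, and (iii) the $h^2$ factors entering through $A_k(uv)$ and through the discrete-versus-continuous Taylor remainders combine with the available powers of $s$ to give exactly $(sh)^2$ and nothing worse. This verification is precisely the content of the estimates quoted as Lemmas~\ref{le32}--\ref{lexx} and is carried out in \cite{boyer2010discrete}, which is why we invoke the result directly; an independent proof would consist in reproducing that bookkeeping in the present notation.
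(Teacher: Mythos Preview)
Your outline is correct and matches the approach of the cited reference. Note that the paper itself does not give a proof of this lemma: it is stated as \cite[Proposition~2.13]{boyer2010discrete} and the surrounding text explicitly says ``The proofs can be found in \cite{boyer2010discrete1d}, \cite{boyer2010discrete} and \cite{boyer2014carleman}.'' Your sketch---regrouping as $(rA_kD_k\rho)(rD_j^2\rho)$, handling the base case via Lemma~\ref{le34}, expanding with the discrete Leibniz rules of Lemma~\ref{le21}, and reassembling the continuous leading parts via Lemma~\ref{le33}---is precisely the bookkeeping carried out in that reference, so there is nothing to add.
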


\subsection{Discrete Carleman estimate}

We introduce the following notations related to the coefficients $\sigma^k,k=1,\dots,d$ for any function $f$,
$$\nabla_\sigma f=(\sqrt{\sigma^1}\partial_1f,\dots,\sqrt{\sigma^d}\partial_df)^T, \quad \Delta_\sigma f=\sum_{k=1}^d\sigma^k\partial^2_kf.$$

The Carleman weight function is of the form $r=e^{s\varphi}$ with $\varphi=e^{-\lambda\psi}$, where $\psi$ satisfies Assumption \ref{as31}. We recall that $\rho=r^{-1}$.

The enlarged neighborhood $\widetilde\Omega$ of $\Omega$ introduced in Assumption \ref{as31} allows us to apply multiple discrete operators such as $D_k$ and $A_k$ on the weight functions.

We are now in position to state and prove the following discrete Carleman estimate.

\begin{proposition}[Discrete Carleman estimate]\label{Carleman}Suppose that Assumption \ref{simg} is satisfied.
Let $\psi$ be a function verifying Assumption \ref{as31} and let $q\in \mathcal C(\Omega_h)$ satisfying $\|q\|_{L_h^\infty(\Omega_h)}\leq M$.  Then there exists a constant  $\lambda_0\geq 1$ such that for each $\lambda\geq \lambda_0$, there exist $s_0(\lambda)\geq 1$, $0<h_0<1$, $\varepsilon_0>0$  and $C = C(\varepsilon_0,s_0,\lambda,M) $ independent of $h > 0$ such that
\begin{align}\label{ce}\begin{split}
	s^3\|e^{s\varphi}u\|_{L^2_h(\Omega_h)}^2&+s\sum_{k=1}^d\int_{\Omega_{h,k}^*}e^{2s\varphi}|D_ku|^2\\
	&\leq C\left(\|e^{s\varphi}(-\Delta_h+q)u\|_{L^2_h(\Omega_h)}^2+s\sum_{k=1}^d\int_{\partial_k\Omega_h}t_r^k(e^{2s\varphi})|\partial_nu|^2\right),
	\end{split}
\end{align}
for all $s\geq s_0(\lambda)$, $0< h\leq h_0$, $sh\leq \varepsilon_0$ and $u\in\mathcal C(\overline\Omega)$ satisfying $u|_{\partial\Omega_h}=0$.
\end{proposition}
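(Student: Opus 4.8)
The plan is to mimic the classical conjugation-and-integration-by-parts strategy for Carleman estimates, adapted to the discrete calculus developed above. First I would set $v=e^{s\varphi}u=ru$, so that $u=\rho v$ with $v|_{\partial\Omega_h}=0$, and conjugate the operator: write $e^{s\varphi}(-\Delta_h+q)(\rho v)=\mathcal P_h v$. Using the product rules of Lemma~\ref{le21} and Corollary~\ref{co22} to expand $D_k(\sigma^k D_k(\rho v))$ in terms of $D_k v$, $A_k v$ and the quantities $r D_k^{p_k}\rho$, $r A_k^p\rho$ appearing in Lemmas~\ref{le34}--\ref{le35}, one obtains a discrete analogue of the conjugated operator $\mathcal P_h v = \mathcal P_{2,h}v + \mathcal P_{1,h}v + \mathcal R_h v$, where $\mathcal P_{2,h}$ collects the formally self-adjoint second-order part (morally $-\Delta_\sigma v - s^2\lambda^2|\nabla_\sigma\psi|^2\varphi^2 v$), $\mathcal P_{1,h}$ the formally skew-adjoint first-order part (morally $2s\lambda\varphi\,\nabla_\sigma\psi\cdot\nabla_\sigma v + (\text{l.o.t.})v$), and $\mathcal R_h$ gathers all the genuinely discrete remainder terms, which by Lemmas~\ref{le34}, \ref{lexx}, \ref{le35} are $O_{\lambda,\varepsilon_0}(sh)$ relative to the leading terms, hence absorbable once $sh\leq\varepsilon_0$ with $\varepsilon_0$ small and $s\geq s_0(\lambda)$.

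Next I would compute $\|\mathcal P_h v\|_{L^2_h(\Omega_h)}^2 \geq 2(\mathcal P_{2,h}v,\mathcal P_{1,h}v)_{\Omega_h} + (\text{absorbable terms})$ and expand the cross term $2(\mathcal P_{2,h}v,\mathcal P_{1,h}v)_{\Omega_h}$ using the discrete integration-by-parts formulas of Lemma~\ref{le28} and the discrete Green identity of Proposition~\ref{le29}. Each integration by parts produces a bulk term and a boundary term on $\partial_k\Omega_h$; since $v$ (hence $u$) vanishes on $\partial\Omega_h$, only the terms carrying a normal derivative $t_r^k(\sigma^k D_k v)$ survive, and these are precisely controlled by $s\sum_k\int_{\partial_k\Omega_h}t_r^k(e^{2s\varphi})|\partial_n u|^2$ after translating $D_k v$ back into $D_k u$ on the boundary (noting $r=e^{s\varphi}$ is smooth and bounded there, so $t_r^k(rD_k\rho)$ contributes lower-order factors). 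The positivity of $\psi$ ($\psi>0$ in $\widetilde\Omega$, so $\varphi$ is bounded below by a positive constant on $\overline\Omega$) together with $|\nabla\psi|>0$ gives, after choosing $\lambda$ large, the sub-ellipticity-type lower bound $2(\mathcal P_{2,h}v,\mathcal P_{1,h}v)_{\Omega_h}\geq c\,s^3\lambda^4\|e^{s\varphi}u\|_{L^2_h}^2 + c\,s\lambda^2\sum_k\int_{\Omega_{h,k}^*}e^{2s\varphi}|D_k v|^2 - C\,s\sum_k\int_{\partial_k\Omega_h}t_r^k(e^{2s\varphi})|\partial_n u|^2$; finally one replaces $D_k v$ by $e^{s\varphi}D_k u$ up to terms of order $s^2 h^2 e^{2s\varphi}|u|^2$ which are absorbed by the dominant $s^3$ term, and absorbs the potential term $qu$ using $\|q\|_{L^\infty_h}\leq M$ and again the $s^3$ term for $s$ large. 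The detailed verification of these steps is what the appendix is announced to contain.

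I expect the main obstacle to be the careful bookkeeping of the discrete remainder terms $\mathcal R_h$ and of the discrepancies between $D_k^{p_k}D_j^{p_j}\rho$ (or $A_k^p\rho$) and their continuous counterparts $\partial^\alpha\rho$: one must verify that every such discrepancy is genuinely $O_{\lambda,\varepsilon_0}((sh)^2)$ times the appropriate power of $s$, so that after multiplication by the other factors in each product it remains strictly lower order than the $s^3$ bulk term and the $s^1$ gradient term, and hence can be absorbed by taking $s_0(\lambda)$ large and $\varepsilon_0$ small. A secondary technical point is to handle the first-order operator $\mathcal P_{1,h}$ not being \emph{exactly} skew-adjoint in the discrete setting — the symmetrization error must itself be tracked and shown to be of the same absorbable order. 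The sign structure ($\psi>0$, $|\nabla\psi|\neq 0$, large $\lambda$) is exactly what makes the continuous cross term positive, and the discrete version inherits this once all remainders are controlled; boundary terms without normal derivatives vanish because $u|_{\partial\Omega_h}=0$, which is why the right-hand side of \eqref{ce} only needs the normal-derivative boundary observation.
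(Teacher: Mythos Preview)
Your proposal is correct and follows essentially the same route as the paper: conjugate by $r=e^{s\varphi}$ to set $v=ru$, decompose the conjugated operator into a (formally) self-adjoint piece $Av=A_1v+A_2v$ (your $\mathcal P_{2,h}$, combining the second-order term $rA_k^2\rho\,D_k(\sigma^kD_kv)$ with the $s^2$-zero-order term $\sigma^k rD_k^2\rho\,A_k^2v$) and a skew-adjoint piece $Bv=B_1v+B_2v$ (your $\mathcal P_{1,h}$), expand $\|Av+Bv\|^2$, estimate each cross term $(A_iv,B_jv)$ by discrete integration by parts, and then return to $u$. One concrete detail you left implicit but the paper makes explicit is the Fursikov--Imanuvilov correction: the lower-order term in $B$ is taken precisely as $B_2v=-2s(\Delta_\sigma\varphi)v$, which is added and subtracted so that the cross terms $(A_1v,B_2v)$ and $(A_2v,B_2v)$ produce exactly the extra $s\lambda^2\varphi|\nabla_\sigma\psi|^2$ factor needed to make the gradient bulk term nonnegative; this is the ``$(\text{l.o.t.})v$'' you anticipated.
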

\begin{remark}
	The parameter $s$ is limited by the condition $sh\leq\varepsilon_0$. This restriction on the range of the Carleman parameter appears in discrete Carleman estimates (see e.g., \cite{baudouin2013convergence},  \cite{baudouin2015stability},
	\cite{boyer2010discrete1d}, \cite{boyer2010discrete}, \cite{boyer2014carleman}, \cite{ervedoza2011uniform}). This is related to the fact that the conjugation of discrete operators with the exponential weight behaves as in the continuous case only for $sh$ small enough, since for instance
	$$e^{s\varphi}D_k(e^{-s\varphi})\simeq -s\partial_k\varphi\quad {\rm only~for}~sh~{\rm small~ enough}.$$

\end{remark}
\begin{remark}
Several recent works have been concerned with discrete and semi-discrete Carleman estimates for second-order differential operators  (see e.g.,  \cite{baudouin2013convergence},  \cite{baudouin2015stability}, \cite{boyer2010discrete1d}, \cite{boyer2010discrete}, \cite{boyer2014carleman}, \cite{ervedoza2011uniform}, \cite{lecaros2021stability},  \cite{lecaros2023discrete}). The main differences between our estimate and the existing results are  that  we choose a different weight function and a more general discretization for the Laplacian. At the same time, we consider boundary observation.
\end{remark}

\begin{proof}[Proof of Proposition \ref{Carleman}]
We make the change of variable $v=ru$. Our first task is to obtain an expression for $ \Delta_{h,\varphi} v:=r\Delta_h(\rho v)$ with the change of variable proposed. By using Lemma \ref{le21}, we have
	\begin{align}\label{e1}
	\begin{split}
		\Delta_{h,\varphi} v=\sum_{k=1}^dr(&D_k(\sigma^kD_k\rho)A_k^2v+A_k(	\sigma^kD_k\rho )D_kA_kv\\&+D_kA_k\rho A_k(\sigma^kD_kv)+A_k^2\rho D_k(\sigma^kD_kv)),
		\end{split}
	\end{align}
	and
	\begin{align*}
	&A_k(\sigma^kD_kv)=A_k\sigma^kA_kD_kv+\frac{h}{4}	D_k\sigma^k(\tau_kD_kv-\tau_{-k}D_kv),\\
	&A_k(\sigma^kD_k\rho)=A_k\sigma^kA_kD_k\rho+\frac{h^2}{4}D_k\sigma^kD_k^2\rho,\\
	&	D_k(\sigma^kD_k\rho)=D_k\sigma^kA_kD_k\rho+A_k\sigma^kD_k^2\rho.
	\end{align*}
	Recalling that, in each term, $\sigma^k$ is the sampling of the given continuous coefficient $\sigma^k$ on the discrete sets, so that we can deduce from Assumption \ref{simg} that $\|A_k\sigma^k-\sigma^k\|_\infty\leq Ch$. Then,
we set 		
\begin{align}\label{abg}
\begin{split}
	A_1v=&\sum_{k=1}^drA_k^2\rho D_k(\sigma^kD_kv),\quad\quad\quad
	A_2v=\sum_{k=1}^d\sigma^krD_k^2\rho A_k^2v,\\
	B_1v=&2\sum_{k=1}^d\sigma^krA_kD_k\rho A_kD_kv,\quad\quad
	B_2v=-2s(\Delta_\sigma\varphi) v
	\end{split}
\end{align}
and
\begin{align*}
\begin{split}
	g=&\Delta_{h,\varphi} v-\sum_{k=1}^d\frac{h}{4}rD_kA_k\rho D_k\sigma^k (\tau_kD_kv-\tau_{-k}D_kv)\\
	&-\sum_{k=1}^d\frac{h^2}{4}rD_k^2\rho D_k\sigma^k D_kA_kv-h\sum_{k=1}^dO(1)rA_kD_k\rho A_kD_kv\\
	&-\sum_{k=1}^d\left(rD_kA_k\rho D_k\sigma^k+hO(1)rD_k^2\rho \right)A_k^2v-2s(\Delta_\sigma\varphi) v.
	\end{split}
\end{align*}
	As in \cite{fursikov1996controllability}, we set
$$Av=A_1v+A_2v, \quad Bv=B_1v+B_2v.$$
 An explanation for the introduction of this additional term $B_2v$ is provided in \cite{le2009carleman}.
Thus, equation (\ref{e1})  reads $Av+Bv=g$ and we have
\begin{align}\label{e33}
	\|Av\|^2_{L^2_h(\Omega_h)}+\|Bv\|^2_{L^2_h(\Omega_h)}+2(Av,Bv)_{\Omega_h}=\|g\|^2_{L^2_h(\Omega_h)}.
\end{align}
We shall need the following estimation of 	$\|g\|^2_{L^2_h(\Omega_h)}$. The proof can be found in Appendix A.
	
\begin{lemma}\label{lerhs}
For $sh\leq \mathcal R$, we have
\begin{align}
	\|g\|^2_{L^2_h(\Omega_h)}\leq C_{\lambda, \mathcal R}\left(\|\Delta_{h,\varphi} v\|^2_{L^2_h(\Omega_h)}+s^2\|v\|^2_{L^2_h(\Omega_h)}+(sh)^2\|v\|_{\mathring{H}_h^1(\Omega_h)}^2\right).
\end{align}
	
\end{lemma}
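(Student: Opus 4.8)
The plan is to obtain the estimate for $\|g\|^2_{L^2_h(\Omega_h)}$ by controlling, term by term, each of the extra pieces appearing in the definition of $g$ relative to $\Delta_{h,\varphi}v$. Recall that
\[
g = \Delta_{h,\varphi}v - \sum_{k}\tfrac{h}{4}rD_kA_k\rho\,D_k\sigma^k(\tau_kD_kv-\tau_{-k}D_kv) - \sum_k\tfrac{h^2}{4}rD_k^2\rho\,D_k\sigma^k\,D_kA_kv - h\sum_k O(1)rA_kD_k\rho\,A_kD_kv - \sum_k\bigl(rD_kA_k\rho\,D_k\sigma^k + hO(1)rD_k^2\rho\bigr)A_k^2v - 2s(\Delta_\sigma\varphi)v,
\]
so by the triangle inequality it suffices to bound the $L^2_h(\Omega_h)$-norm of each of the six groups of terms. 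First I would invoke Lemma \ref{le34} and Lemma \ref{lexx} with $\mathcal R$ in the role of the threshold: under $sh\leq\mathcal R$ one has $rA_kD_k\rho = sO_{\lambda,\mathcal R}(1)$, $rD_k^2\rho = s^2O_{\lambda,\mathcal R}(1)$, $rA_kD_kA_k\rho = sO_{\lambda,\mathcal R}(1)$ and $rD_kA_k\rho = sO_{\lambda,\mathcal R}(1)$ (the last by combining $rA_kD_k\rho$ with the commutation $A_kD_k = D_kA_k$ from Lemma \ref{le21} and the boundedness of $A_k$). These give uniform-in-$h$ bounds on the weight factors up to powers of $s$.

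Next I would treat the $\sigma$-dependent factors using Assumption \ref{simg}: the quantities $\|D_k\sigma^k\|_{L^\infty_h}$ are bounded by $\epsilon_d<1$ hence by a constant, and $\|\sigma^k\|_{L^\infty_h}\le M_0$, so these contribute only harmless constants. It then remains to estimate the discrete-function factors. For the terms containing $A_k^2v$ I would use Corollary \ref{co22}, which gives $(A_kv)^2\le A_k(v^2)$ and hence, after summing over the grid and using that $\tau_{\pm k}$ and $A_k$ are essentially measure-preserving shifts (up to near-boundary adjustments controlled because $v$ vanishes on $\partial\Omega_h$), $\|A_k^2v\|_{L^2_h(\Omega_h)}\le C\|v\|_{L^2_h(\Omega_h)}$. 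For the terms containing $A_kD_kv$ or $D_kA_kv$, the same commutation $A_kD_k=D_kA_k$ plus Corollary \ref{co22} yields $\|A_kD_kv\|_{L^2_h}\le C\|D_kv\|_{L^2_h(\Omega_{h,k}^*)}\le C\|v\|_{\mathring H^1_h(\Omega_h)}$; the shifted differences $\tau_{\pm k}D_kv$ are estimated the same way. Collecting: the first group is $O_{\lambda,\mathcal R}(sh)\cdot\|v\|_{\mathring H^1_h}$, so its square is $\le C_{\lambda,\mathcal R}(sh)^2\|v\|_{\mathring H^1_h(\Omega_h)}^2$; the second group is $O_{\lambda,\mathcal R}(s^2h^2)\cdot\|v\|_{\mathring H^1_h}$, whose square is even smaller (a power $(sh)^2$ times $(sh)^2\|v\|^2_{\mathring H^1_h}$, absorbed into the stated right-hand side since $(sh)^2\le\mathcal R^2$); the third group is $O_{\lambda,\mathcal R}(sh)\cdot\|v\|_{\mathring H^1_h}$, square $\le C(sh)^2\|v\|^2_{\mathring H^1_h}$; the fourth group splits into $O_{\lambda,\mathcal R}(s)\|v\|_{L^2_h}$ (from $rD_kA_k\rho\,D_k\sigma^k$) and $O_{\lambda,\mathcal R}(s^2h)\|v\|_{L^2_h}$ (from $hrD_k^2\rho$), with squares $\le C s^2\|v\|^2_{L^2_h}$ after using $sh\le\mathcal R$; and the last group $2s(\Delta_\sigma\varphi)v$ is $O_\lambda(s)\|v\|_{L^2_h}$, square $\le Cs^2\|v\|^2_{L^2_h}$. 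Adding $\|\Delta_{h,\varphi}v\|^2_{L^2_h}$ from the first term of $g$ gives exactly
\[
\|g\|^2_{L^2_h(\Omega_h)}\le C_{\lambda,\mathcal R}\Bigl(\|\Delta_{h,\varphi}v\|^2_{L^2_h(\Omega_h)}+s^2\|v\|^2_{L^2_h(\Omega_h)}+(sh)^2\|v\|^2_{\mathring H^1_h(\Omega_h)}\Bigr).
\]

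I expect the main obstacle to be bookkeeping of the near-boundary terms: the shift operators $\tau_{\pm k}$ applied to $D_kv$ move the argument off $\Omega_{h,k}^*$, so the naive identity $\|\tau_{\pm k}w\|_{L^2_h}=\|w\|_{L^2_h}$ picks up or loses boundary layers of width $h$; one must check that these are either genuinely absent (because $v=ru$ with $u|_{\partial\Omega_h}=0$, so $v$ vanishes on $\partial\Omega_h$ and the relevant differences near the boundary only involve interior values) or controlled by $\|v\|_{\mathring H^1_h(\Omega_h)}$ and $\|v\|_{L^2_h(\Omega_h)}$ with constants independent of $h$. A careful but routine accounting, of the kind already carried out in \cite{boyer2010discrete} and \cite{ervedoza2011uniform}, closes this gap; this is the content deferred to Appendix A.
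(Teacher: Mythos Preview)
Your proposal is correct and follows essentially the same approach as the paper: both proceed term by term, invoke Lemma~\ref{le34} for the weight factors $rD_kA_k\rho$, $rD_k^2\rho$, $rA_kD_k\rho$, use Assumption~\ref{simg} for the $\sigma^k$-factors, and Corollary~\ref{co22} together with $v|_{\partial\Omega_h}=0$ to control $\|A_k^2v\|_{L^2_h}$ and $\|A_kD_kv\|_{L^2_h}$. The near-boundary shift issue you flag is handled in the paper by the one-line inclusion $\int_{\Omega_h}|\tau_{\pm k}D_kv|^2=\int_{\tau_{\pm k}(\Omega_h)}|D_kv|^2\leq\int_{\Omega_{h,k}^*}|D_kv|^2$, confirming your expectation that it is routine.
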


Now, we shall estimate the inner-product
\begin{align}\label{e36}
	(Av,Bv)_{\Omega_h}=\sum_{i,j=1}^2(A_iv,B_jv)_{\Omega_h}.
\end{align}
To estimate each term of (\ref{e36}), we obtain the following results Lemma \ref{le37}-Lemma \ref{le311}. To make the paper more concise, we put the proofs of Lemma \ref{le37}-Lemma \ref{le311} to Appendix A.

\begin{lemma}\label{le37}
For $sh\leq \mathcal R$, we have
	\begin{align*}
	(A_1v,B_1v)_{\Omega_h}\geq &-\sum_{k=1}^d\int_{\Omega_{h,k}^*}s\lambda^2\varphi\sigma^k|\nabla_\sigma\psi|^2 |D_kv|^2+Y_1+Z_1,
	\end{align*}
where
\begin{align*}
	Y_1
	=&\sum_{k=1}^d\int_{\partial_k\Omega_h}\gamma_1^k\beta_1^k\sigma^kA_k\sigma^kt_r^k(|D_kv|^2)n_k-h\sum_{k=1}^d\int_{\partial_k\Omega_h}t_r^k(D_k(\gamma_1^k\beta_1^k\sigma^kA_k\sigma^k)|D_kv|^2)
	\\& -\frac{h^2}{2}\sum_{k=1}^d\int_{\partial_k\Omega_h}D_k^2(\gamma_1^k\beta_1^k\sigma^kA_k\sigma^k)t_r^k(|D_kv|^2)n_k,
\end{align*}
with $\gamma_1^k=rA_k^2\rho$, $\beta_1^k=rA_kD_k\rho$ and
\begin{align*}
	Z_1=&-\sum_{k=1}^d\int_{\Omega_{h,k}^*}(s\lambda\varphi |O(1)|+s|O_{\lambda\mathcal, R}(sh)| )|D_kv|^2\\
	&-\sum_{k=1}^d\int_{\Omega_h}h^2(s\lambda\varphi |O(1)|+s|O_{\lambda\mathcal, R}(sh)|)|D_k^2v|^2\\
	&-\sum_{k=1}^d\int_{\Omega_h}(s\lambda\varphi |O(1)|+s|O_{\lambda\mathcal, R}(sh)| )|A_kD_kv|^2\\
	&-\sum_{j,k=1 \atop k\neq j}^d\int_{\Omega_{h,kj}^*}h^2(s\lambda\varphi |O(1)|+s|O_{\lambda,\mathcal R}(sh)|)|D_kD_jv|^2.
\end{align*}
\end{lemma}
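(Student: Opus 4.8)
The plan is to follow the classical Carleman machinery applied to the conjugated operator, treating the inner product $(A_1v,B_1v)_{\Omega_h}$ as a sum of a genuinely positive (Carleman) contribution, explicit boundary terms, and lower-order remainders that can be absorbed later. First I would substitute the definitions $A_1v=\sum_k rA_k^2\rho\, D_k(\sigma^kD_kv)$ and $B_1v=2\sum_k \sigma^k r A_kD_k\rho\, A_kD_kv$ and expand the product, keeping in mind that $\gamma_1^k=rA_k^2\rho$ and $\beta_1^k=rA_kD_k\rho$ are the weight factors whose discrete derivatives are controlled by Lemmas \ref{le34} and \ref{lexx}. The main computational device is the discrete integration by parts \eqref{dk} in direction $k$: I would move the outer $D_k$ off the factor $D_k(\sigma^kD_kv)$ onto the product $\gamma_1^k\beta_1^k\sigma^k A_k\sigma^k\,A_kD_kv$, which produces a bulk term $\int_{\Omega_{h,k}^*} D_k(\gamma_1^k\beta_1^k\sigma^k A_k\sigma^k)\,\sigma^k D_kv\,(\cdots)$ and a boundary term on $\partial_k\Omega_h$. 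Because $D_k$ and $A_k$ do not commute freely with multiplication and because the average operators shift the mesh, one further integration by parts (or Corollary \ref{co22}-type manipulations) is needed to rewrite $A_kD_kv\cdot D_kv$ as $\tfrac12 D_k\big((D_kv)^2\big)$ modulo $h^2(D_k^2v)^2$ corrections, which is exactly where the $Y_1$ boundary terms of the stated form (with the $h$ and $h^2$ corrections and the $t_r^k$, $n_k$ factors) and the $h^2\int|D_k^2v|^2$, $h^2\int|D_kD_jv|^2$ terms inside $Z_1$ arise.

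The heart of the positivity claim is the identification of the leading-order coefficient. Using Lemma \ref{le34}, $\gamma_1^k=rA_k^2\rho=1+O_{\lambda,\mathcal R}((sh)^2)$ and $\beta_1^k=rA_kD_k\rho = r\partial_k\rho + sO_{\lambda,\mathcal R}((sh)^2) = -s\partial_k\varphi+\dots$, while $D_k(\gamma_1^k\beta_1^k)$ contributes, to leading order, $-s\partial_k^2\varphi$-type terms together with $-s\lambda^2\varphi(\partial_k\psi)^2$-type terms coming from differentiating $\varphi=e^{-\lambda\psi}$ twice (each derivative of $\varphi$ costs a factor $\lambda$, and $\partial_k^2\varphi=\lambda^2\varphi(\partial_k\psi)^2-\lambda\varphi\partial_k^2\psi$). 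After multiplying by $\sigma^k A_k\sigma^k$ (which is $\sigma^k\cdot\sigma^k + O(h)$ by the $\|A_k\sigma^k-\sigma^k\|_\infty\le Ch$ bound noted in the proof of Proposition \ref{Carleman}) and summing over $k$, the dominant piece multiplying $|D_kv|^2$ is $-s\lambda^2\varphi\sigma^k|\nabla_\sigma\psi|^2$ plus terms of size $s\lambda\varphi\,O(1)$ and $s\,O_{\lambda,\mathcal R}(sh)$, which is precisely the decomposition asserted: the named leading term, then $Y_1$ (boundary), then $Z_1$ (everything of lower order in $\lambda$ or containing an extra $sh$ or $h^2(D^2v)^2$). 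I would be careful to keep the inequality direction: terms I cannot sign (the $O(1)$ and $O_{\lambda,\mathcal R}(sh)$ pieces, and the $h^2$-weighted second-difference terms) get bounded below by their negatives and thrown into $Z_1$, which is why $Z_1$ appears with an overall minus sign and absolute values on the symbols.

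The step I expect to be the main obstacle is the bookkeeping of the discrete commutator and averaging errors: unlike the continuous case, $A_kD_k(\sigma^kD_kv)$ cannot simply be treated as $\partial_k^2v$ up to lower order, and the non-commutativity of $A_j$, $D_k$ across directions forces the appearance of genuinely mixed terms $D_kD_jv$ (for $k\ne j$) weighted by $h^2$, which must be tracked explicitly and placed in $Z_1$ rather than silently discarded. Controlling these requires repeated use of Lemmas \ref{le34}–\ref{le35} to certify that every stray factor is of the claimed order in $s$, $h$, $\lambda$, and requires the hypothesis $sh\le\mathcal R$ at every application. A secondary difficulty is producing the boundary term $Y_1$ in exactly the stated shape: the discrete integration by parts generates boundary contributions at each of the (up to three) integrations by parts, and one must collect the $t_r^k$, $n_k$, and the $h$, $h^2$-weighted second-difference boundary pieces so that they match the formula for $Y_1$ verbatim; the details of this are the content of Appendix A, so here I would only indicate the structure and defer the grinding. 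Once Lemma \ref{le37} is in hand, the analogous estimates for $(A_1v,B_2v)$, $(A_2v,B_1v)$, $(A_2v,B_2v)$ (Lemmas \ref{le38}–\ref{le311}), combined with the right-hand side bound of Lemma \ref{lerhs} and a large-$\lambda$, then large-$s$ absorption argument, will yield the Carleman estimate \eqref{ce}.
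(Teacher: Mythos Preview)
Your proposal is essentially correct and follows the same approach as the paper: expand $(A_1v,B_1v)_{\Omega_h}=\sum_{j,k}I_{jk}$, integrate by parts in the $k$-direction, use Lemmas \ref{le34}--\ref{le35} to extract the leading coefficient $-s\lambda^2\varphi\sigma^k|\nabla_\sigma\psi|^2$, and collect boundary and remainder terms. The only organizational point you leave implicit is that the paper splits the double sum into diagonal terms $I_{kk}$ (where $2D_k^2v\,A_kD_kv=D_k(|D_kv|^2)$ gives the $(\sigma^k)^2(\partial_k\psi)^2$ piece and all of $Y_1$) and off-diagonal terms $I_{jk}$, $j\neq k$ (which supply the remaining $\sigma^j\sigma^k(\partial_j\psi)^2$ pieces needed to assemble $|\nabla_\sigma\psi|^2$, and which generate no boundary terms since $A_jD_jv=0$ on $\partial_k\Omega_h$); the positive ``perfect square'' $X_1=2\int s\lambda^2\varphi\bigl|\sum_k\sigma^k(\partial_k\psi)A_kD_kv\bigr|^2$ and the nonnegative $h^2$-weighted $X_2$ are then simply dropped to obtain the inequality.
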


\begin{lemma}\label{le39}
For $sh\leq \mathcal R$, we have
$$(A_2v,B_1v)_{\Omega_h}\geq 3\int_{\Omega_h} s^3\lambda^4\varphi^3|\nabla_\sigma\psi|^4|v|^2+Y_2+Z_2,$$
where
$$Y_2=\sum_{k=1}^d\int_{\partial_k\Omega_h}\gamma_2^k \beta_1^k(\sigma^k)^2t_r^k(|A_kv|^2)n_k,$$
with $\gamma_2^k=rD_k^2\rho$, $\beta_1^k=rA_kD_k\rho$ and
\begin{align*}
	Z_2=&-\int_{\Omega_h}\left((s\lambda\varphi)^3|O(1)|+s^2|O_{\lambda,\mathcal R}(1)|+s^3|O_{\lambda,\mathcal R}(sh)|\right)|v|^2\\
	&-\sum_{k=1}^d\int_{\Omega_h}s|O_{\lambda,\mathcal R}(sh)||D_kA_kv|^2-\sum_{k=1}^d\int_{\Omega_{h,k}^*}s|O_{\lambda,\mathcal R}((sh)^2)||D_kv|^2.
\end{align*}
	
\end{lemma}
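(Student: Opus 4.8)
\textbf{Proof proposal for Lemma \ref{le39}.} The plan is to compute the inner product $(A_2v, B_1v)_{\Omega_h}$ directly from the definitions in \eqref{abg}, tracking the leading term in powers of $s$ and $\lambda$ while controlling everything else as lower-order contributions or boundary terms. Recall $A_2v = \sum_k \sigma^k r D_k^2\rho\, A_k^2 v$ and $B_1v = 2\sum_j \sigma^j r A_j D_j\rho\, A_j D_j v$, so that $(A_2v, B_1v)_{\Omega_h}$ is a double sum over $k,j$ of terms of the form $2\int_{\Omega_h} \sigma^k\sigma^j (rD_k^2\rho)(A_k^2 v)(rA_jD_j\rho)(A_jD_jv)$. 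The first step is to replace the conjugated weight factors by their continuous counterparts using Lemmas \ref{le34} and \ref{lexx}: $rD_k^2\rho = r\partial_k^2\rho + s^2 O_{\lambda,\mathcal R}((sh)^2)$ and $rA_jD_j\rho = r\partial_j\rho + sO_{\lambda,\mathcal R}((sh)^2)$, and by Lemma \ref{le32} we have $r\partial_k^2\rho = s^2\lambda^2\varphi^2(\partial_k\psi)^2 + O_\lambda(s)$ and $r\partial_j\rho = s\lambda\varphi\,\partial_j\psi + O_\lambda(1)$. This produces a principal factor $s^3\lambda^3\varphi^3(\partial_k\psi)^2\partial_j\psi$ against $(A_k^2v)(A_jD_jv)$, with the $sh$-corrections and the $\lambda$-lower-order corrections collected into error terms of the type appearing in $Z_2$.

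The second step handles the principal term $2\int_{\Omega_h}\sum_{k,j}\sigma^k\sigma^j s^3\lambda^3\varphi^3(\partial_k\psi)^2(\partial_j\psi)(A_k^2v)(A_jD_jv)$. Here the key device is to recognize $(\partial_j\psi)(A_jD_jv)$ as essentially $\frac12\partial_j$ of something: using Corollary \ref{co22}, $D_j(A_j v)^2 = 2 (A_jD_jv)(A_j^2 v)$ up to the discrete product rule, so after an integration by parts in the $e_j$ direction via \eqref{dk} one transfers the $D_j$ onto the coefficient $s^3\lambda^3\varphi^3(\partial_j\psi)\sigma^k\sigma^j(\partial_k\psi)^2$. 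The dominant contribution of that differentiation falls on $\varphi^3$, yielding $D_j(\varphi^3) \approx 3\varphi^2 \partial_j\varphi = -3\lambda\varphi^3\partial_j\psi$, which brings out the extra factor $\lambda\varphi\,(\partial_j\psi)^2$ and, after summing over $j$ and $k$ and using $\sum_k\sigma^k(\partial_k\psi)^2 = |\nabla_\sigma\psi|^2$ (and likewise in $j$), produces the stated leading term $3\int_{\Omega_h}s^3\lambda^4\varphi^3|\nabla_\sigma\psi|^4|v|^2$. One also has to replace $A_j^2 v$ by $v$ and $(A_jv)^2$ by $v^2$ at the cost of $O((sh)^2)$-type and $h^2$-type errors, again using Corollary \ref{co22} and the bound $(D_jv)^2\le \frac{4}{h^2}A_j(v^2)$; these go into $Z_2$. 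The integration by parts generates the boundary term $Y_2 = \sum_k\int_{\partial_k\Omega_h}\gamma_2^k\beta_1^k(\sigma^k)^2 t_r^k(|A_kv|^2)n_k$ — note that because $v=ru$ with $u|_{\partial\Omega_h}=0$ one expects $Y_2$ to ultimately drop out, but it must be carried along here since that cancellation is exploited later.

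The third step is to assemble the error terms. Everything not in the leading term or in $Y_2$ must be shown to be bounded, in absolute value, by an expression of the form appearing in $Z_2$: namely $\int_{\Omega_h}((s\lambda\varphi)^3|O(1)| + s^2|O_{\lambda,\mathcal R}(1)| + s^3|O_{\lambda,\mathcal R}(sh)|)|v|^2$, plus terms $\sum_k\int_{\Omega_h}s|O_{\lambda,\mathcal R}(sh)||D_kA_kv|^2$ and $\sum_k\int_{\Omega_{h,k}^*}s|O_{\lambda,\mathcal R}((sh)^2)||D_kv|^2$. The $(s\lambda\varphi)^3|O(1)|$ term absorbs the subleading piece of $D_j(\varphi^3\partial_j\psi\cdots)$ where $D_j$ hits $\psi$-derivatives or $\sigma$ rather than $\varphi$, which loses one power of $\lambda$; the $s^2$ term comes from the $O_\lambda(s)$ correction in $r\partial_k^2\rho$; and the $s^3O_{\lambda,\mathcal R}(sh)$ terms come from the weight-conjugation corrections in Lemmas \ref{le34} and \ref{lexx}. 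The main obstacle, as usual in these Carleman computations, is bookkeeping discipline: one must apply the discrete Leibniz rules from Lemma \ref{le21} carefully so that every $h^2/4$ cross term and every commutator between $A_j$, $D_j$ is either absorbed into $Z_2$ or cancelled, and one must be careful that the integration-by-parts boundary terms are exactly $Y_2$ and not something with extra lower-order pieces — the cleanest route is to perform the discrete integration by parts on the already-symmetrized quantity $\sum_j D_j\big(\text{coeff}_j \cdot (A_j v)^2\big)$ and read off both $Y_2$ and the bulk term simultaneously. Since the calculation is lengthy and entirely mechanical, I would relegate the full verification to Appendix A, as the authors do, and here only indicate the two places where a genuine idea enters: the symmetrization-plus-integration-by-parts that converts $(A_2v,B_1v)$ into a positive bulk term, and the observation that the $D_j$ falling on $\varphi^3$ is what upgrades $\lambda^3$ to $\lambda^4$, which is precisely why $\lambda$ must be taken large.
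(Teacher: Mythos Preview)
Your heuristic is right --- integrate by parts so that a derivative lands on the weight coefficient, and the hit on $\varphi^3$ is what upgrades $\lambda^3$ to $\lambda^4$ --- but the order of operations you propose does not close, and your off-diagonal treatment is incomplete.

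The paper does \emph{not} approximate the weight factors first. It keeps the exact discrete coefficients $\gamma_2^j=rD_j^2\rho$, $\beta_1^k=rA_kD_k\rho$, writes $(A_2v,B_1v)_{\Omega_h}=\sum_{j,k}J_{jk}$ with $J_{jk}=2\int_{\Omega_h}\gamma_2^j\beta_1^k\sigma^j\sigma^k\,A_j^2v\,A_kD_kv$, and performs all the discrete integrations by parts \emph{before} any expansion. Only at the very end does it invoke the first equality in Lemma~\ref{le35} together with Lemma~\ref{le33} to evaluate quantities such as $A_kD_k(\gamma_2^k\beta_1^k(\sigma^k)^2)$ and $D_kA_kA_j^2(\gamma_2^j\beta_1^k\sigma^j\sigma^k)$, which give $-3s^3\lambda^4\varphi^3\sigma^j\sigma^k(\partial_k\psi)^2(\partial_j\psi)^2$ plus the lower-order pieces that form $Z_2$. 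The point is that Lemma~\ref{le35} controls the \emph{discrete derivatives} of the exact weight products directly; if you approximate first as you propose, the residual term $\int_{\Omega_h} s^3\,O_{\lambda,\mathcal R}((sh)^2)\,(A_k^2v)(A_jD_jv)$ cannot be absorbed into the stated $Z_2$ by Young's inequality alone (try it: any splitting leaves either an $s^4(sh)^2|v|^2$ or an $s^4(sh)^2|D_jA_jv|^2$ that is too large), because you have thrown away the information about the derivative of the error. Your own remark that the boundary term $Y_2$ carries the \emph{exact} factors $\gamma_2^k\beta_1^k$ is already a hint that the approximation cannot have come first.

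Second, the identity $D_j(A_jv)^2=2(A_jD_jv)(A_j^2v)$ that you quote handles only the diagonal $J_{kk}$; for $k\neq j$ the integrand is $(A_j^2v)(A_kD_kv)$, which is not a total difference. The paper treats these off-diagonal terms by a chain of integrations by parts: first undo one $A_j$ via \eqref{ak} (using $A_kD_kv=0$ on $\partial_j\Omega_h$), apply the product rule, then undo an $A_k$, recognise $2(A_kA_jv)(A_jD_kv)=D_k(|A_jv|^2)$, and finally integrate by parts with $D_k$ via \eqref{dk}. This is what produces the bulk term $-\int_{\Omega_h}D_kA_kA_j^2(\gamma_2^j\beta_1^k\sigma^j\sigma^k)|v|^2$ with no boundary contribution (so $Y_2$ really does come only from the diagonal), and it is this specific sequence that your sketch is missing.
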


\begin{lemma}\label{le310}
	For $sh\leq \mathcal R$, we have
\begin{align*}
	(A_1v,B_2v)_{\Omega_h}\geq 2\sum_{k=1}^d\int_{\Omega_{h,k}^*}s\lambda^2\varphi\sigma^k|\nabla_\sigma\psi|^2|D_kv|^2+Z_3,
\end{align*}
where $$Z_3=-\int_{\Omega_h}s^2|O_{\lambda,\mathcal R}(1)||v|^2-\sum_{k=1}^d\int_{\Omega_{h,k}^*}(s\lambda\varphi |O(1)|+s|O_{\lambda,\mathcal R}(sh)|)|D_kv|^2.$$
\end{lemma}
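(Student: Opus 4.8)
\textbf{Proof plan for Lemma \ref{le310} (the estimate for $(A_1v,B_2v)_{\Omega_h}$).}
The plan is to expand the inner product by definition and reduce it to a sum of a good positive term and controllable remainders. Recall from (\ref{abg}) that $A_1v=\sum_{k=1}^d rA_k^2\rho\, D_k(\sigma^kD_kv)$ and $B_2v=-2s(\Delta_\sigma\varphi)v$, so
\begin{align*}
(A_1v,B_2v)_{\Omega_h}=-2s\sum_{k=1}^d\int_{\Omega_h}(rA_k^2\rho)\,D_k(\sigma^kD_kv)\,(\Delta_\sigma\varphi)v.
\end{align*}
First I would move the outer difference operator $D_k$ off of $\sigma^kD_kv$ using the discrete integration by parts (\ref{dk}) from Lemma \ref{le28}; since $u|_{\partial\Omega_h}=0$ and hence $v=ru$ also vanishes on $\partial\Omega_h$, the boundary term produced carries a factor $v$ evaluated (via $t_r^k$) on $\partial_k\Omega_h$, which vanishes. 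This leaves
\begin{align*}
(A_1v,B_2v)_{\Omega_h}=2s\sum_{k=1}^d\int_{\Omega_{h,k}^*}\sigma^k(D_kv)\,D_k\!\big((rA_k^2\rho)(\Delta_\sigma\varphi)v\big).
\end{align*}

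Next I would apply the discrete Leibniz rule for $D_k$ from Lemma \ref{le21} to expand $D_k\big((rA_k^2\rho)(\Delta_\sigma\varphi)v\big)$ into a sum of terms, each containing either $D_kv$ or $A_kv$ together with discrete derivatives/averages of the smooth functions $rA_k^2\rho$ and $\Delta_\sigma\varphi$. The key point is the asymptotic identities from Lemma \ref{le34} and Lemma \ref{lexx}: $rA_k^2\rho=1+O_{\lambda,\mathcal R}((sh)^2)$, $rA_k^pD_k\rho=r\partial_k\rho+sO_{\lambda,\mathcal R}((sh)^2)$, so that $A_k(rA_k^2\rho)=1+O_{\lambda,\mathcal R}((sh)^2)$ and $D_k(rA_k^2\rho)=O_{\lambda,\mathcal R}((sh)^2)/h$-type terms are in fact $sO_{\lambda,\mathcal R}(sh)$ after the calculus lemmas are invoked. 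The dominant term is the one where $D_k$ hits $v$ and every other factor is replaced by its leading order: $2s\sum_k\int_{\Omega_{h,k}^*}\sigma^k A_k\big((\Delta_\sigma\varphi)v\big)(D_kv)\cdot 1$, and using $A_k(fv)=A_kf\,A_kv+\tfrac{h^2}{4}D_kf\,D_kv$ together with the identity relating $D_k(v^2)=2D_kv\,A_kv$ (Corollary \ref{co22}) will, after one more integration by parts to move a derivative onto $\Delta_\sigma\varphi$, produce the main positive contribution $2\sum_k\int_{\Omega_{h,k}^*}s\lambda^2\varphi\,\sigma^k|\nabla_\sigma\psi|^2|D_kv|^2$. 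Here one uses $\Delta_\sigma\varphi=\sum_j\sigma^j\partial_j^2\varphi$ and $\partial_j^2\varphi=\lambda^2\varphi(\partial_j\psi)^2+O_\lambda(\varphi)$, so the leading term of $\Delta_\sigma\varphi$ is $\lambda^2\varphi|\nabla_\sigma\psi|^2$ up to $O_\lambda(1)$ lower order in $\lambda$; the coefficient $2$ arises from pairing this with the $D_kv$ term through the sum over $k$.

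All remaining terms get absorbed into $Z_3$. Specifically: (i) terms where a discrete derivative falls on $rA_k^2\rho$ carry a factor $sO_{\lambda,\mathcal R}(sh)$ and multiply $|D_kv|^2$-type quantities, giving the $s|O_{\lambda,\mathcal R}(sh)||D_kv|^2$ contribution; (ii) the $\tfrac{h^2}{4}D_kf\,D_kv$ pieces of the average expansion combine to further $O_{\lambda,\mathcal R}(sh)$-size corrections; (iii) the lower-order-in-$\lambda$ part $O_\lambda(\varphi)$ of $\Delta_\sigma\varphi$ and the terms where the derivative lands on $\Delta_\sigma\varphi$ pair with $|A_kv|^2$ or with $|v|^2$; after a discrete Cauchy–Schwarz / Young's inequality and using $|A_kv|^2\le A_k(v^2)$ (Corollary \ref{co22}) together with $\int_{\Omega_{h,k}^*}A_k(v^2)\le C\|v\|_{L_h^2(\overline\Omega_h)}^2$, these are bounded by $s^2|O_{\lambda,\mathcal R}(1)||v|^2$, giving the first term of $Z_3$. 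I would keep careful track that no uncontrolled $\tfrac{1}{h}$ survives; this is guaranteed because each bare $D_k$ acting on a smooth weight factor is, by Lemma \ref{le34}, of size $sO_{\lambda,\mathcal R}(1)$ (not $O(1/h)$), and each $h^2D_k^2$ acting on a weight is $O_{\lambda,\mathcal R}((sh)^2)$.

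\textbf{Main obstacle.} The delicate point is bookkeeping the interplay of powers of $s$, $\lambda$, $h$ and $\varphi$: one must verify that the "cross" terms produced by the two successive integrations by parts and the Leibniz expansions genuinely carry an extra factor $sh$ (so they are negligible once $sh\le\varepsilon_0$) or an extra power of $s$ matched by $|v|^2$ rather than $|D_kv|^2$, rather than contaminating the good gradient term with the wrong sign or a coefficient that cannot be beaten later. The structural reason this works — and the reason the weight is taken of the form $\varphi=e^{-\lambda\psi}$ — is the usual convexification identity $\partial_j^2\varphi=\lambda^2\varphi(\partial_j\psi)^2+\lambda\varphi\cdot O(1)$, which isolates the dominant $\lambda^2\varphi|\nabla_\sigma\psi|^2$ factor; since at this stage $\lambda$ is not yet fixed, the $O_\lambda(1)$ remainders are retained symbolically and only absorbed when $\lambda$ is chosen large in the final assembly of (\ref{ce}). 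The discrete corrections, being $O_{\lambda,\mathcal R}(sh)$ relative to the continuous terms, are then swallowed by choosing $\varepsilon_0$ small.
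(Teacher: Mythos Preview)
Your plan is essentially the paper's own argument: one discrete integration by parts moving $D_k$ off $\sigma^kD_kv$ onto the weight-times-$v$ factor, a Leibniz split, identification of the leading coefficient via $\Delta_\sigma\varphi=\lambda^2\varphi|\nabla_\sigma\psi|^2+\lambda\varphi\,O(1)$, and absorption of the cross term into $Z_3$.

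Two points where the paper is cleaner than your outline. First, the paper groups $\omega_1^k:=rA_k^2\rho\cdot(\Delta_\sigma\varphi)$ as a \emph{single} smooth symbol and applies Leibniz only once, $D_k(\omega_1^k v)=A_k(\omega_1^k)D_kv+D_k(\omega_1^k)A_kv$; the asymptotics $A_k(\omega_1^k)=\lambda^2\varphi|\nabla_\sigma\psi|^2+\lambda\varphi\,O(1)+O_{\lambda,\mathcal R}(h+(sh)^2)$ and $D_k(\omega_1^k)=O_{\lambda,\mathcal R}(1)$ then give the main term and the remainder directly. Your displayed ``dominant term'' $\sigma^k A_k((\Delta_\sigma\varphi)v)D_kv$ has a stray $v$ inside the average; it should read $\sigma^k A_k(\omega_1^k)|D_kv|^2$. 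Second, there is \emph{no} second integration by parts: the cross term $2s\,D_k(\omega_1^k)\,\sigma^k A_kv\,D_kv$ is handled by Young's inequality (splitting into $s^2|O_{\lambda,\mathcal R}(1)||v|^2$ and $|O_{\lambda,\mathcal R}(1)||D_kv|^2$, the latter absorbed by $s\lambda\varphi|O(1)|$ for $s$ large). Your proposed route through $D_k(v^2)=2A_kv\,D_kv$ and another integration by parts would also work, but it is extra labor for no gain.
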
	
\begin{lemma}\label{le311}
	For $sh\leq \mathcal R$, we have
	\begin{align*}
	(A_2v,B_2v)_{\Omega_h}\geq -2\int_{\Omega_h} s^3\lambda^4\varphi^3|\nabla_\sigma\psi|^4|v|^2+Z_4,
\end{align*}
where
\begin{align*}
	Z_4=-\int_{\Omega_h}\left((s\lambda\varphi)^3|O(1)|+s^2|O_\lambda(1)|+s^3|O_{\lambda,\mathcal R}((sh)^2)|\right)|v|^2-\sum_{k=1}^d\int_{\Omega_{h,k}^*}s|O_{\lambda,\mathcal R}(sh)||D_kv|^2.
\end{align*}
\end{lemma}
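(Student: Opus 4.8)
The plan is to compute the inner product $(A_2v,B_2v)_{\Omega_h}$ directly from the definitions in \eqref{abg}, namely $A_2v=\sum_{k=1}^d\sigma^k r D_k^2\rho\,A_k^2 v$ and $B_2v=-2s(\Delta_\sigma\varphi)v$, and to isolate the claimed leading term by passing from the discrete weight-derivatives to their continuous counterparts. Because $B_2v$ is a pure multiplication operator applied to $v$, no derivative of $v$ is differentiated in the product except through $A_k^2$; consequently every integration by parts I perform below will group the undifferentiated factor $v$ with a coefficient, and since $v=0$ on $\partial_k\Omega_h$ the resulting boundary term vanishes. This is why the statement of Lemma \ref{le311} carries no $Y$-term.

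First I would replace the discrete second difference of $\rho$ by the continuous one: by Lemma \ref{le34} (third line, $p_k=2$, $p_j=0$) one has $rD_k^2\rho=r\partial_k^2\rho+s^2O_{\lambda,\mathcal R}((sh)^2)$, while the elementary identity (equivalently Lemma \ref{le32} with $\alpha=2e_k$, $\beta=0$) gives $r\partial_k^2\rho=s^2(\partial_k\varphi)^2-s\partial_k^2\varphi$. Summing against $\sigma^k$ and using $\partial_k\varphi=-\lambda(\partial_k\psi)\varphi$ yields $\sum_k\sigma^k r\partial_k^2\rho=s^2\lambda^2\varphi^2|\nabla_\sigma\psi|^2-s\Delta_\sigma\varphi$, and similarly $\Delta_\sigma\varphi=\lambda^2\varphi|\nabla_\sigma\psi|^2-\lambda\varphi\Delta_\sigma\psi$, so that $B_2v=\bigl(-2s\lambda^2\varphi|\nabla_\sigma\psi|^2+2s\lambda\varphi\Delta_\sigma\psi\bigr)v$. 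I would then substitute $A_k^2=I+\tfrac{h^2}{4}D_k^2$, splitting $A_2v$ into a ``$v$-part'' and a ``$D_k^2v$-part''. Multiplying the $v$-parts, the product of the two leading factors is exactly $s^2\lambda^2\varphi^2|\nabla_\sigma\psi|^2\cdot(-2s\lambda^2\varphi|\nabla_\sigma\psi|^2)=-2s^3\lambda^4\varphi^3|\nabla_\sigma\psi|^4$, which integrated against $|v|^2$ produces the main term $-2\int_{\Omega_h}s^3\lambda^4\varphi^3|\nabla_\sigma\psi|^4|v|^2$.

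The remaining products of the $v$-parts are of order $(s\lambda\varphi)^3O(1)$ (the cross term with $2s\lambda\varphi\Delta_\sigma\psi$) and $s^2O_\lambda(1)$ (the term $(-s\Delta_\sigma\varphi)(-2s\Delta_\sigma\varphi)=2s^2(\Delta_\sigma\varphi)^2$) times $|v|^2$, and the $s^2O_{\lambda,\mathcal R}((sh)^2)$ correction from $rD_k^2\rho$ paired with the $v$-part contributes $s^3O_{\lambda,\mathcal R}((sh)^2)|v|^2$; these three land precisely in the $|v|^2$ part of $Z_4$, each with the correct sign once one bounds below by the negative of its absolute value.

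The delicate point is the $D_k^2v$-part of $A_2v$. Each contribution has the form $\int_{\Omega_h}c_k(D_k^2v)v$ with $c_k=\sigma^k(rD_k^2\rho)\tfrac{h^2}{4}(-2s\Delta_\sigma\varphi)=s^3h^2O_{\lambda,\mathcal R}(1)=sO_{\lambda,\mathcal R}((sh)^2)$ by Lemma \ref{le34}. Writing $D_k^2v=D_k(D_kv)$ and integrating by parts through \eqref{dk} with $u=c_kv$, the boundary term vanishes as noted, leaving $-\int_{\Omega_{h,k}^*}(D_kv)D_k(c_kv)$. Expanding $D_k(c_kv)=(D_kc_k)A_kv+(A_kc_k)D_kv$, using $A_kc_k=sO_{\lambda,\mathcal R}((sh)^2)$ and $D_kc_k=sO_{\lambda,\mathcal R}((sh)^2)$ (via Lemma \ref{lexx} for the difference of $rD_k^2\rho$), and invoking $(sh)^2\le\mathcal R\,sh$ turns the genuine $|D_kv|^2$ term into $sO_{\lambda,\mathcal R}(sh)|D_kv|^2$, matching the last term of $Z_4$; the mixed term is controlled by Young's inequality together with $(A_kv)^2\le A_k(v^2)$ from Corollary \ref{co22}, distributing into the $|D_kv|^2$ and $|v|^2$ parts of $Z_4$ (the latter absorbed since $sO_{\lambda,\mathcal R}((sh)^2)\le s^3O_{\lambda,\mathcal R}((sh)^2)$). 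The main obstacle is exactly this bookkeeping: one must verify that after a single integration by parts every residual term has the precise order in $s$, $\lambda$, and $sh$ prescribed by $Z_4$, and in particular that the apparently dangerous factors $h^2D_k^2v$ are tamed into genuine $O(sh)$-small contributions once $sh\le\mathcal R$ is exploited.
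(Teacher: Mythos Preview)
Your proposal is correct and follows essentially the same route as the paper: both split $A_k^2v=v+\tfrac{h^2}{4}D_k^2v$, expand $\omega_2^k:=\sigma^k rD_k^2\rho\,\Delta_\sigma\varphi$ to extract the leading term $-2s^3\lambda^4\varphi^3|\nabla_\sigma\psi|^4|v|^2$ from $Q_{1k}=-2s\int\omega_2^k|v|^2$, and then integrate by parts in the $D_k^2v$-piece using $v|_{\partial_k\Omega_h}=0$. The only difference is in the treatment of the cross term $\int_{\Omega_{h,k}^*}(D_kc_k)(A_kv)(D_kv)$ after the first integration by parts: the paper rewrites $A_kv\,D_kv=\tfrac12 D_k(|v|^2)$ and integrates by parts once more to obtain the pure $|v|^2$ contribution $-\tfrac{sh^2}{4}\int_{\Omega_h}D_k^2(\omega_2^k)|v|^2$, whereas you split it with Young's inequality and $(A_kv)^2\le A_k(v^2)$; both routes land in $Z_4$ with the required orders.
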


 Combining the estimates in Lemma \ref{lerhs}-Lemma \ref{le311} with (\ref{e33}), for $sh\leq \mathcal R$, we have
\begin{align}\label{e37}
\begin{split}
	&\int_{\Omega_h} 2s^3\lambda^4\varphi^3|\nabla_\sigma\psi|^4|v|^2+\sum_{k=1}^d\int_{\Omega_{h,k}^*}2s\lambda^2\varphi\sigma^k|\nabla_\sigma\psi|^2|D_kv|^2+2(Y_1+Y_2+Z)\\
	\leq &C_{\lambda, \mathcal R}\left(\|\Delta_{h,\varphi} v\|^2_{L^2_h(\Omega_h)}+s^2\|v\|^2_{L^2_h(\Omega_h)}+(sh)^2\|v\|_{\mathring{H}_h^1(\Omega_h)}^2\right),
	\end{split}
\end{align}
where $Z=Z_1+Z_2+Z_3+Z_4$.
\begin{lemma}\label{le313}
	For $sh\leq \mathcal R$, we have
	\begin{align*}
\sum_{k=1}^d\int_{\Omega_{h,k}^*}s\lambda^2\varphi\sigma^k|\nabla_\sigma\psi|^2|D_kv|^2
	\geq& \frac{h^2}{4d}\sum_{j,k=1 \atop j\neq k}^d\int_{\Omega_{h,kj}^*}s\lambda^2\varphi\sigma^k|\nabla_\sigma\psi|^2|D_jD_kv|^2\\
	&+\frac{1}{d}\sum_{k=1}^d\int_{\Omega_h}s\lambda^2\varphi\sigma^k|\nabla_\sigma\psi|^2|A_kD_kv|^2\\
	&+\frac{h^2}{4d}\sum_{k=1}^d\int_{\Omega_h}s\lambda^2\varphi\sigma^k|\nabla_\sigma\psi|^2|D_k^2v|^2+Y_3+Z_5,
\end{align*}	
where $$Y_3=	\frac{h^2}{4}\sum_{k=1}^d\int_{\partial_k\Omega_h}D_k(\varphi\sigma^k|\nabla_\sigma\psi|^2)t_r^k(|D_kv|^2)n_k,$$	
and
\begin{align*}
	Z_5=&-\sum_{j,k=1 \atop j\neq k}^d\int_{\Omega_{h,kj}^*}h^2|O_{\lambda,\mathcal R}(sh)| |D_jD_kv|^2-\sum_{k=1}^d\int_{\Omega_h}|O_{\lambda,\mathcal R}(sh)||A_kD_kv|^2\\
	&-\sum_{k=1}^d\int_{\Omega_h}h^2|O_{\lambda,\mathcal R}(sh)||D_k^2v|^2-\sum_{k=1}^d\int_{\Omega_{h,k}^*}|O_{\lambda,\mathcal R}(sh)||D_kv|^2.
\end{align*}	
\end{lemma}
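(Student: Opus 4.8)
The plan is to pass from the sum over the "staggered" grids $\Omega_{h,k}^*$ on the left to the interior sums over $\Omega_h$ and over $\Omega_{h,kj}^*$ on the right, using the discrete splitting identities from Corollary \ref{co22} applied to $D_k v$. First I would fix $k$ and apply the pointwise identity $A_j(w^2)=(A_jw)^2+\frac{h^2}{4}(D_jw)^2$ with $w=D_kv$, which gives, for each $j\ne k$, a way to rewrite an integral of $A_j(|D_kv|^2)$ over $\Omega_{h,k}^*$ as the sum of an $|A_jD_kv|^2$-term and an $\frac{h^2}{4}|D_jD_kv|^2$-term over $\Omega_{h,kj}^*$; summing over the $d-1$ indices $j\ne k$ and over $k$, and then using the case $j=k$ (i.e. $A_k(|D_kv|^2)=(A_kD_kv)^2+\frac{h^2}{4}(D_k^2v)^2$), accounts for all of the explicit positive terms on the right-hand side, after dividing by $d$ to distribute the weight. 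The discrete integration-by-parts formula \eqref{ak} is what converts an integral over $\Omega_{h,k}^*$ against $A_k$ into an integral over $\Omega_h$ against the original mesh, and it is precisely this step that produces the boundary term $Y_3$, carrying the factor $\frac{h^2}{4}D_k(\varphi\sigma^k|\nabla_\sigma\psi|^2)$ and the trace $t_r^k(|D_kv|^2)n_k$.

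The slightly delicate point is that the weight $s\lambda^2\varphi\sigma^k|\nabla_\sigma\psi|^2$ is not constant on the staggered grid, so when I move it through the averaging operators and through \eqref{ak}, I pick up commutator terms of the form (difference of the weight)$\times(|D_kv|^2,\ |A_kD_kv|^2,\ |D_jD_kv|^2,\ |D_k^2v|^2)$. Since $\varphi=e^{-\lambda\psi}$ and $\sigma^k,\psi\in C^\infty(\overline{\widetilde\Omega})$ with $\psi$ extended to the enlarged neighborhood $\widetilde\Omega$, every such difference is $O_\lambda(h)$, and after multiplying by the ambient factor $s$ it becomes $O_{\lambda,\mathcal R}(sh)$; these are exactly the four error terms collected into $Z_5$ (with an extra $h^2$ in front of the second-order pieces $|D_jD_kv|^2$ and $|D_k^2v|^2$, matching the $h^2$ in their defining norms). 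I would bound all these commutators uniformly using $sh\le\mathcal R$ together with $\|\varphi\|_{L^\infty}$, $\|\nabla\psi\|_{L^\infty}$ and the bounds on $\sigma^k$ coming from Assumption \ref{simg}.

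Concretely, the order of steps is: (i) expand $|D_kv|^2$ in the interior sum using $\sum_{j\ne k}A_j$-identities plus the $j=k$ identity, tracking the $h^2/4$ coefficients; (ii) reassemble so that the weight $s\lambda^2\varphi\sigma^k|\nabla_\sigma\psi|^2$ multiplies each squared-difference term, moving it through $A_j$ at the cost of $O_{\lambda,\mathcal R}(sh)$ commutators; (iii) apply \eqref{ak} in the $e_k$ direction to relocate the remaining $A_k$-average onto $\Omega_h$, extracting the boundary term $Y_3$ and one more $O_{\lambda,\mathcal R}(sh)$ commutator; (iv) discard the nonnegative commutators on the favourable side and collect the rest into $Z_5$, dividing the whole thing by $d$ to obtain the stated coefficients. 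The main obstacle is purely bookkeeping: keeping the powers of $h$ attached to each second-order term consistent (so that $Z_5$ has the claimed $h^2$-weighted form) and making sure none of the $O_{\lambda,\mathcal R}(sh)$ remainders secretly hides a power of $s$ that would spoil the later absorption in \eqref{e37}; the discrete calculus lemmas (Lemma \ref{le32}--Lemma \ref{le35}) give exactly the size estimates needed to rule that out.
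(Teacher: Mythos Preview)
Your proposal is correct and follows essentially the same route as the paper: for each $k$, sum $d$ copies of $\int_{\Omega_{h,k}^*}\omega_3^k|D_kv|^2$ (one per direction $j$), apply discrete integration by parts in direction $e_j$ together with the product rule from Lemma~\ref{le21} and the splitting $A_j(|D_kv|^2)=|A_jD_kv|^2+\frac{h^2}{4}|D_jD_kv|^2$ from Corollary~\ref{co22}, then use $A_j(\omega_3^k)=\omega_3^k+hO_{\lambda,\mathcal R}(1)$ to peel off the commutators into $Z_5$ and divide by $d$. One small correction: the boundary term $Y_3$ does not come from \eqref{ak} itself (that formula produces only the nonnegative term $\frac{h}{2}\int_{\partial_k\Omega_h}t_r^k(\omega_3^k|D_kv|^2)$, which is dropped) but from applying \eqref{dk} to the cross term $\frac{h^2}{4}\int_{\Omega_h}D_k(\omega_3^k)D_k(|D_kv|^2)$ that arises in the $j=k$ case after the product-rule expansion; this is what accounts for the factor $n_k$ in $Y_3$.
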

The proof of Lemma \ref{le313} can be found in Appendix A.

Combining (\ref{e37}) with Lemma \ref{le313},  there exists a constant $\lambda_0\geq1$ sufficiently large, for $\lambda\geq\lambda_0$, there exist $s_0(\lambda)\geq 1$ sufficiently large, $\varepsilon_0,0<h_0<1$ sufficiently small and  $\widehat{C}_{\lambda,\varepsilon_0,s_0}, \widetilde{C}_{\lambda,\varepsilon_0,s_0}>0$,  such that for  $s\geq s_0(\lambda)$, $0<h\leq h_0$  and $sh\leq \varepsilon_0$, we have
\begin{align}\label{e7}
\begin{split}
	\widehat{C}_{\lambda,\varepsilon_0,s_0}\left(s^3\|v\|^2_{L^2_h(\Omega_h)}+s\|v\|_{\mathring{H}_h^1(\Omega_h)}^2\right)
		\leq \widetilde{C}_{\lambda,\varepsilon_0,s_0}\|\Delta_{h,\varphi} v\|^2_{L^2_h(\Omega_h)}-2(Y_1+Y_2)-Y_3.
	\end{split}
\end{align}
Now we  estimate the boundary terms.  Since $v=0$ on $\partial_k\Omega_h$, we have $t_r^k(A_kv)=-\frac{h}{2}t_r^k(D_kv)n_k$ for all $x\in \partial_k\Omega_h$, which leads to $t_r^k(|A_kv|^2)=\frac{h^2}{4}t_r^k(|D_kv|^2)$ by virtue of $t_r^k(|A_kv|^2)=|t_r^k(A_kv)|^2$. Thus, we have
\begin{align*}
	-2(Y_1+Y_2)-Y_3=\sum_{k=1}^d\int_{\partial_k\Omega_h}&\left(-2\gamma_1^k\beta_1^k\sigma^kA_k\sigma^kt_r^k(|D_kv|^2)n_k+ht_r^k(D_k(\gamma_1^k\beta_1^k\sigma^kA_k\sigma^k))t_r^k(|D_kv|^2)\right.\\
	& \left.-\frac{h^2}{4}D_k(\varphi\sigma^k|\nabla_\sigma\psi|^2)t_r^k(|D_kv|^2)n_k-\frac{h^2}{2}\gamma_2^k \beta_1^k(\sigma^k)^2t_r^k(|D_kv|^2)n_k\right.\\
	&\left.+h^2D_k^2(\gamma_1^k\beta_1^k\sigma^kA_k\sigma^k)t_r^k(|D_kv|^2)n_k\right),
\end{align*}
where $\gamma_1^k=rA_k^2\rho$,$\gamma_2^k=rD_k^2\rho$, $\beta_1^k=rA_kD_k\rho$.
Similar to Lemma \ref{leA1}, we have following results.
\begin{align*}
&\gamma_1^k\beta_1^k\sigma^kA_k\sigma^k=s\lambda\varphi(\sigma^k)^2\partial_k\psi+sO_{\lambda,\varepsilon_0}(sh),\quad\\
&D_k^2(\gamma_1^k\beta_1^k\sigma^kA_k\sigma^k)=sO_{\lambda,\varepsilon_0}(1),\quad t_r^k(D_k(\gamma_1^k\beta_1^k\sigma^kA_k\sigma^k))=sO_{\lambda,\varepsilon_0}(1),\\
&D_k(\varphi\sigma^k|\nabla_\sigma\psi|^2)=O_{\lambda}(1),\quad \quad\quad\gamma_2^k \beta_1^k(\sigma^k)^2=s^3O_{\lambda,\varepsilon_0}(1).
\end{align*}
Thus, we have
\begin{align}\label{boun}
	\begin{split}
		-2(Y_1+Y_2)-Y_3\leq\sum_{k=1}^d\int_{\partial_k\Omega_h}&\left(2s\lambda\varphi(\sigma^k)^2|\partial_k\psi| +s|O_{\lambda,\varepsilon_0}(sh)|\right.\\
	& \left.+h^2|O_{\lambda}(1)|
	+h|O_{\lambda,\varepsilon_0}(sh)|\right)t_r^k(|D_kv|^2).
	\end{split}
\end{align}
Combining (\ref{e7}) with (\ref{boun}), there exists constant $C>0$ such that
\begin{align}\label{e39}
	s^3\|v\|^2_{L^2_h(\Omega_h)}+s\|v\|_{\mathring{H}_h^1(\Omega_h)}^2
		\leq C\left(\|\Delta_{h,\varphi} v\|^2_{L^2_h(\Omega_h)}+s\sum_{k=1}^d\int_{\partial_k\Omega_h}t_r^k(|D_kv|^2)\right),
\end{align}
for  $s\geq s_0(\lambda)$, $0<h\leq h_0$      and $sh\leq \varepsilon_0$,
Finally, we need to express all the terms in the estimate above in terms of the original function $u$.  To this end, we need the following Lemma.
\begin{lemma}\label{le314}
	For $sh\leq \mathcal R$, we have
	\begin{align}
		\sum_{k=1}^d\int_{\Omega_{h,k}^*}r^2|D_ku|^2&\leq C_{\lambda,\mathcal R}\left(s^2\|v\|^2_{L^2_h(\Omega_h)}+\|v\|_{\mathring{H}_h^1(\Omega_h)}^2\right),\label{e310}\\
		\sum_{k=1}^d\int_{\partial_k\Omega_h}t_r^k(|D_kv|^2)&\leq  C_{\lambda,\mathcal R}\sum_{k=1}^d\int_{\partial_k\Omega_h}t_r^k(r^2)|\partial_nu|^2. \label{e311}
	\end{align}
\end{lemma}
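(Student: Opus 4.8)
The plan is to push the discrete product rule of Lemma~\ref{le21} through the change of variables $u=\rho v$ (with $r=e^{s\varphi}$, $\rho=r^{-1}$, $v=ru$), and then to read off the sizes of the resulting conjugation factors from Lemma~\ref{le34}; these two bounds are precisely what transports the interior/boundary estimate (\ref{e39}) back to the original unknown $u$, and hence produces the Carleman inequality (\ref{ce}). For (\ref{e310}) I would first write, on $\Omega_{h,k}^*$,
\[ D_ku=D_k(\rho v)=(D_k\rho)(A_kv)+(A_k\rho)(D_kv), \]
so that $r^2|D_ku|^2\le 2\,|rD_k\rho|^2\,|A_kv|^2+2\,|rA_k\rho|^2\,|D_kv|^2$. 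By Lemma~\ref{le34}, on $\Omega_{h,k}^*$ and for $sh\le\mathcal R$ one has $|rD_k\rho|\le C_{\lambda,\mathcal R}\,s$ (the case $p=0$) and $|rA_k\rho|=1+O_{\lambda,\mathcal R}((sh)^2)\le C_{\lambda,\mathcal R}$ (the case $p=1$), whence $r^2|D_ku|^2\le C_{\lambda,\mathcal R}\big(s^2|A_kv|^2+|D_kv|^2\big)$ there. Integrating over $\Omega_{h,k}^*$ and summing in $k$: the $|D_kv|^2$ term is $\|v\|_{\mathring{H}_h^1(\Omega_h)}^2$ by definition, and for the averaged term I would use $(A_kv)^2\le A_k(v^2)$ from Corollary~\ref{co22} together with the facts that $v=ru$ vanishes on $\partial\Omega_h\supset\partial_k\Omega_h$ and that each point of $\overline\Omega_{h,k}=\Omega_h\cup\partial_k\Omega_h$ enters the averaging sum with total weight at most one, which gives $\int_{\Omega_{h,k}^*}A_k(v^2)\le\int_{\overline\Omega_{h,k}}v^2=\|v\|_{L^2_h(\Omega_h)}^2$. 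This yields (\ref{e310}).

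For (\ref{e311}) the key geometric remark is that, since $\Omega_h$ consists of the strictly interior grid points of the cube, each $\partial_k\Omega_h$ consists of the grid points whose $k$-th coordinate lies in $\{0,1\}$ and whose other coordinates are strictly interior; hence the $\partial_k\Omega_h$ are pairwise disjoint and none meets an edge of the cube, so on $\partial_k\Omega_h$ the normal derivative reduces to $\partial_nu=t_r^k(\sigma^kD_ku)\,n_k=t_r^k(\sigma^k)\,t_r^k(D_ku)\,n_k$. Using the uniform positive lower bound on the sampled coefficient $\sigma^k$ (from Assumption~\ref{simg}), this gives $|t_r^k(D_ku)|^2\le C\,|\partial_nu|^2$ pointwise on $\partial_k\Omega_h$. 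Next, for $x\in\partial_k\Omega_h$ set $y_x:=x-h\,n_k(x)\,e_k\in\Omega_h$, the interior grid neighbour of $x$; a direct computation of $t_r^k(D_ku)(x)$ and $t_r^k(D_kv)(x)$ from the definitions, using $u(x)=v(x)=0$ and $v(y_x)=r(y_x)\,u(y_x)$, yields $t_r^k(D_kv)(x)=r(y_x)\,t_r^k(D_ku)(x)$, so $t_r^k(|D_kv|^2)(x)=r(y_x)^2\,|t_r^k(D_ku)(x)|^2$. Finally $t_r^k(r^2)(x)$ is the value of $r^2=e^{2s\varphi}$ at the adjacent half grid point $x-\tfrac h2 n_k(x)e_k$, and since $\varphi=e^{-\lambda\psi}$ is Lipschitz with constant depending only on $\lambda$, one has $r(y_x)^2=e^{O_\lambda(sh)}\,t_r^k(r^2)(x)\le C_{\lambda,\mathcal R}\,t_r^k(r^2)(x)$ whenever $sh\le\mathcal R$. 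Combining the three displays gives $t_r^k(|D_kv|^2)\le C_{\lambda,\mathcal R}\,t_r^k(r^2)\,|\partial_nu|^2$ on $\partial_k\Omega_h$, and summing the boundary integrals in $k$ produces (\ref{e311}).

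The analytic content is light: the conjugation bounds on $rD_k\rho$ and $rA_k\rho$ are immediate consequences of Lemma~\ref{le34}. The step I expect to require the most care is the discrete boundary bookkeeping — verifying that on each face $\partial_k\Omega_h$ only the $k$-th summand of $\partial_nu$ survives (so that the lower bound on $\sigma^k$ may be invoked to control $t_r^k(D_ku)$), tracking how the vanishing of $u$ on $\partial\Omega_h$ turns $t_r^k(D_kv)$ into $r(y_x)\,t_r^k(D_ku)$, and controlling the $e^{O(sh)}$ discrepancy between $r$ evaluated at the adjacent whole and half grid points so that $t_r^k(r^2)$ may be used with constants uniform in $h$.
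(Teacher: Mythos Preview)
Your proof of (\ref{e310}) matches the paper's almost line for line: same product rule expansion $D_ku=(D_k\rho)A_kv+(A_k\rho)D_kv$, same pointwise bounds on $rD_k\rho$ and $rA_k\rho$ from Lemma~\ref{le34}, and the same passage $\int_{\Omega_{h,k}^*}|A_kv|^2\le\int_{\Omega_{h,k}^*}A_k(v^2)=\|v\|_{L^2_h(\Omega_h)}^2$ via Corollary~\ref{co22} and the vanishing of $v$ on $\partial_k\Omega_h$.

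For (\ref{e311}) you take a slightly different but equally valid route. The paper writes $D_kv=(A_kr)D_ku+(D_kr)A_ku$, bounds $\rho A_kr$ and $\rho D_kr$ through Lemma~\ref{le34} to get $|\rho D_kv|^2\le C_{\lambda,\mathcal R}(|D_ku|^2+s^2|A_ku|^2)$, and then uses the boundary identity $t_r^k(|A_ku|^2)=\tfrac{h^2}{4}t_r^k(|D_ku|^2)$ together with $sh\le\mathcal R$ to absorb the $A_ku$ contribution before writing $t_r^k(|D_kv|^2)=t_r^k(r^2)\,t_r^k(|\rho D_kv|^2)$. You instead compute $t_r^k(D_kv)$ directly from the vanishing of $u$ and $v$ at the boundary point, obtaining the exact relation $t_r^k(D_kv)(x)=r(y_x)\,t_r^k(D_ku)(x)$, and then only need a Lipschitz bound on $\varphi$ to compare $r(y_x)^2$ with $t_r^k(r^2)(x)$. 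Your version is marginally more direct for this boundary step since it bypasses Lemma~\ref{le34} and the $A_ku$ identity, while the paper's keeps the argument uniform with the interior conjugation machinery; the final reduction to $|\partial_nu|^2$ via the lower bound on $t_r^k(\sigma^k)$ is identical in both.
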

The proof of Lemma \ref{le314} can be found in Appendix A.

Recalling that
$q\in \mathcal C(\Omega_h)$ satisfying $\|q\|_{L_h^\infty(\Omega_h)}\leq M$
and combining (\ref{e39}) with Lemma \ref{le314},  we obtain
\begin{align*}
	s^3\|ru\|^2_{L^2_h(\Omega_h)}+s\sum_{k=1}^d&\int_{\Omega_{h,k}^*}r^2|D_ku|^2\\
	\leq &C_{
\lambda,\varepsilon_0,s_0, M}\left(\|r(-\Delta_{h}u +qu)\|^2_{L^2_h(\Omega_h)}+s\sum_{k=1}^d\int_{\partial_k\Omega_h}t_r^k(r^2)|\partial_nu|^2\right),
\end{align*}
for
 $s\geq s_0(\lambda)$, $0<h\leq h_0$      and $sh\leq \varepsilon_0$.

Thus the proof of Proposition \ref{Carleman} is complete.
\end{proof}

\section{A unique continuation estimate}
In this section, we prove a unique continuation result for the discrete Schr\"odinger equation with homogeneous boundary condition which will be a crucial step in the proof of the main result. Our argument inspired by the idea  of \cite{robbiano1995fonction}, see also \cite{bellassoued2004global} and \cite{bellassoued2006logarithmic}.
By applying the discrete Carleman estimate with full boundary observation, we show a unique continuation result  from  boundary data on an arbitrary small part $\Gamma_h\subset \partial_i^\pm\Omega_h $, i=1,\dots,d.

Before presenting the  result, we introduce some notations. Let $\mathcal O\subset\Omega$ be an
arbitrary neighborhood of $\partial\Omega$. We choose $\varrho, \varrho_1,\varrho_2>0$ such that
\begin{align}
	\omega(8\varrho):=\{x\in \Omega, {\rm dist}(x,\partial\Omega)<8\varrho\}\subset\mathcal O
\end{align}
and
\begin{align}
	\omega(\varrho_1, \varrho_2):=\{x\in \Omega,\varrho_1<{\rm dist}(x,\partial\Omega)<\varrho_2\}\subset\mathcal O,\quad \varrho_1<\varrho_2<8\varrho.
\end{align}	
We then denote	 $\mathcal O_h:=\mathcal O\cap\Omega_h$,  $\omega_h(8\varrho):=\omega(8\varrho)\cap\Omega_h$, $\omega_h(\varrho_1, \varrho_2):=\omega(\varrho_1, \varrho_2)\cap\Omega_h$ (note that these sets are always non-empty for $h$ small enough).

Let $q\in \mathcal C(\Omega_h)$ satisfying $\|q\|_{L_h^\infty(\Omega_h)}\leq M$, $f\in \mathcal C(\Omega_h)$ satisfying $f=0$ in $\mathcal O_h$ and let $u\in\mathcal C(\overline\Omega_h)$ be such that
\begin{align}\label{uniq}
\begin{split}
	(-\Delta_h+q)u&=f, \quad {\rm in}~\Omega_h,\\
	u&=0, \quad  {\rm on}~\partial\Omega_h.
	\end{split}
\end{align}
We are now ready to show the result.
\begin{proposition}[Unique continuation estimate]\label{prou}
 Let $\Gamma$ be a non-empty open subset of $\partial \Omega$  satisfying  $\Gamma_h:=(\Gamma\cap\partial\Omega_h)\subset\partial_i^\pm\Omega_h$, $1\leq i\leq d$.
Then there exist constants  $0<h_0<1$, $\widetilde\varepsilon>0$, $C>0$, $\alpha_1>0$ and  $\alpha_2>0$ such that for all $0<h\leq h_0$,  $0<h\tau\leq\widetilde\varepsilon$ and all $u\in \mathcal C(\overline\Omega_h)$  satisfying (\ref{uniq}), we have
	\begin{align}\label{ee}
		\|u\|_{H^1_h(\omega_h(\varrho, 3\varrho))}\leq C\left(e^{-\alpha_1\tau}\|u\|_{H^1_h(\Omega_h)}+e^{\alpha_2\tau}|\partial_n u|_{L_h^2(\Gamma_h)}\right).
	\end{align}	
\end{proposition}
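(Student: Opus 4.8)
The plan is to follow the Robbiano-type strategy: combine the discrete Carleman estimate of Proposition \ref{Carleman} with a suitable choice of weight function $\psi$ and then optimize in the Carleman parameter $s$. First I would fix a weight function $\psi$ satisfying Assumption \ref{as31} whose level sets near $\partial\Omega$ separate the ``observation region'' (a collar $\omega_h(7\varrho,8\varrho)$ adjacent to $\partial\Omega$, contained in $\mathcal{O}_h$ where $f$ vanishes) from the target region $\omega_h(\varrho,3\varrho)$; concretely one wants $\psi$ larger on the target region than on the outer collar and on $\partial\Omega$, so that $\varphi=e^{-\lambda\psi}$ is correspondingly smaller there. Then I would introduce a cutoff function $\chi\in C^\infty(\overline\Omega)$, sampled on $\Omega_h$, with $\chi\equiv 1$ on $\omega_h(\varrho,3\varrho)$ and $\chi\equiv 0$ near $\partial\Omega$ (say on $\omega_h(8\varrho)\setminus\omega_h(\varrho_1,\varrho_2)$ for appropriate $\varrho_1<\varrho<3\varrho<\varrho_2$), and apply Proposition \ref{Carleman} to $w:=\chi u$, which satisfies $w|_{\partial\Omega_h}=0$ and
\begin{align*}
(-\Delta_h+q)(\chi u)=\chi f+[\Delta_h,\chi]u=:F,
\end{align*}
where the commutator $[\Delta_h,\chi]u$ is supported in the region where $\nabla\chi\neq 0$, i.e.\ in two collars $\omega_h(\varrho_1,\varrho)\cup\omega_h(3\varrho,\varrho_2)$, and is bounded, via the discrete product rule (Lemma \ref{le21}), by $C(\|u\|_{L_h^2}+\|u\|_{\mathring H^1_h})$ on those collars. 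Since $f=0$ on $\mathcal{O}_h\supset\omega_h(8\varrho)$, the term $\chi f$ is supported in $\Omega_h\setminus\omega_h(8\varrho)$, i.e.\ away from the outer collar.

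The second step is to feed $F$ into the right-hand side of (\ref{ce}). Because $w\equiv u$ on $\omega_h(\varrho,3\varrho)$ and $w|_{\partial\Omega_h}=0$, and because the normal derivative $\partial_n w$ on $\partial\Omega_h$ equals $\chi\,\partial_n u$ (hence vanishes off $\Gamma$'s complement only where $\chi=0$; more to the point, choosing $\chi$ to vanish in a neighborhood of all of $\partial\Omega$ makes the boundary term in (\ref{ce}) disappear entirely — but then I lose the data, so instead I keep $\chi=1$ near $\Gamma_h$ and $0$ elsewhere on $\partial\Omega$), the boundary term contributes only $s\int_{\Gamma_h}t_r^i(e^{2s\varphi})|\partial_n u|^2$. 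The Carleman estimate then gives, schematically,
\begin{align*}
s^3\int_{\omega_h(\varrho,3\varrho)}e^{2s\varphi}|u|^2 \;\le\; C\Big(\int_{\mathrm{supp}\,\nabla\chi}e^{2s\varphi}\big(|u|^2+|\nabla_h u|^2\big)+\int_{\Omega_h\setminus\omega_h(8\varrho)}e^{2s\varphi}|f|^2+s\int_{\Gamma_h}e^{2s\varphi}|\partial_n u|^2\Big).
\end{align*}
Now the weight separation pays off: let $\mu_0:=\min_{\omega_h(\varrho,3\varrho)}\varphi$, let $\mu_1<\mu_0$ bound $\varphi$ from above on the outer part of $\mathrm{supp}\,\nabla\chi$ (the collar near $\partial\Omega$, where $\psi$ is larger) and on $\partial\Omega$, and absorb the inner collar $\omega_h(3\varrho,\varrho_2)$ plus the region $\Omega_h\setminus\omega_h(8\varrho)$ into $\|u\|^2_{H^1_h(\Omega_h)}$ with a crude bound $\varphi\le\mu_2$. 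After dividing by $e^{2s\mu_0}$ one obtains
\begin{align*}
\|u\|^2_{H^1_h(\omega_h(\varrho,3\varrho))}\;\le\; C\Big(e^{2s(\mu_2-\mu_0)}\|u\|^2_{H^1_h(\Omega_h)}+e^{2s(\mu_1-\mu_0)}\|u\|^2_{H^1_h(\Omega_h)}+e^{2s(\mu_1-\mu_0)}|\partial_n u|^2_{L_h^2(\Gamma_h)}\Big);
\end{align*}
here I would be slightly more careful to arrange, by a standard two-sphere/three-region geometric choice of $\psi$, that the ``bad'' exponent $\mu_2-\mu_0$ is actually positive while $\mu_1-\mu_0<0$, which forces splitting the estimate into a piece with a large-norm factor $e^{+c s}$ multiplying $\|u\|_{H^1_h(\Omega_h)}$ and a small-norm factor $e^{-c' s}$. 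Writing $\tau=cs$ (legitimate since the constraint $sh\le\varepsilon_0$ becomes $\tau h\le\widetilde\varepsilon$) and taking square roots yields exactly (\ref{ee}) with $\alpha_1,\alpha_2>0$ depending only on the geometry, $M$, $\lambda$, $s_0$.

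The main obstacle is the geometric construction of $\psi$: in the cube $\Omega$ the boundary has edges and corners (the ``cusps'' mentioned in the introduction), so one cannot simply use $\psi=\mathrm{dist}(\cdot,\partial\Omega)$ or a single quadratic bump, and one must build $\psi\in C^\infty(\overline{\widetilde\Omega})$ with $|\nabla\psi|>0$ everywhere on the enlarged domain, whose level sets foliate a neighborhood of $\partial\Omega$ so that moving inward from $\partial\Omega$ the value $\varphi$ first decreases (across the outer collar $\omega_h(\varrho_1,\varrho)$, good sign) then must be arranged so the target collar sits at a strictly smaller $\varphi$-value than both the data region on $\partial\Omega$ and the outer cutoff collar. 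A clean way is to take $\psi$ depending only on a smoothed version of the distance to the particular face $\partial_i\Omega\supset\Gamma$, increasing as one enters $\Omega$; then $\Gamma_h$ and the outer collar both sit at small $\psi$ (large $\varphi$) while $\omega_h(\varrho,3\varrho)$, being further from that face, sits at larger $\psi$ (smaller $\varphi$) — but one must verify $|\nabla\psi|>0$ holds globally, which is why the enlargement to $\widetilde\Omega$ and a careful smoothing are needed. A secondary technical point is controlling the discrete commutator $[\Delta_h,\chi]u$ and the sampling errors $\|A_k\sigma^k-\sigma^k\|_\infty\le Ch$ uniformly in $h$; these are handled exactly as in the proof of Proposition \ref{Carleman} using Lemmas \ref{le21}–\ref{le28} and the fact that $F$ is supported away from the region where the Carleman weight is largest, so that all error terms are absorbed into $\|u\|_{H^1_h(\Omega_h)}$ with an acceptable exponential factor.
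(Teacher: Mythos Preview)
Your plan tries to obtain \eqref{ee} from a \emph{single} application of the Carleman estimate with one global weight and one cutoff, and this cannot work as written. The region $\omega_h(\varrho,3\varrho)$ is an annular collar surrounding the \emph{entire} boundary of the cube, not just the part near $\Gamma$. With your cutoff $\chi$ supported in $\omega_h(\varrho_1,\varrho_2)$, the commutator lives on two collars, one on each side of the target; for the estimate to close you would need $\min_{\omega_h(\varrho,3\varrho)}\varphi$ to exceed $\max\varphi$ on \emph{both} collars simultaneously, which is impossible for any smooth $\varphi$ since the target is sandwiched between them. Your own bookkeeping reflects this: you end up with a term $e^{2s(\mu_2-\mu_0)}\|u\|_{H^1_h(\Omega_h)}^2$ with $\mu_2-\mu_0>0$, i.e.\ a factor that \emph{grows} in $s$ in front of the a~priori norm, which is precisely the wrong sign to produce the $e^{-\alpha_1\tau}\|u\|_{H^1_h(\Omega_h)}$ piece in \eqref{ee}. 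The attempted fix of taking $\psi$ to depend on the distance to the single face $\partial_i^\pm\Omega$ does not help either: points of $\omega_h(\varrho,3\varrho)$ lying near the \emph{other} faces are far from $\partial_i^\pm\Omega$, so $\varphi$ is small there, and the minimum $\mu_0$ is then below the value of $\varphi$ on the outer collar near $\Gamma$ and on $\partial\Omega_h\setminus\Gamma_h$.

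The paper instead uses Robbiano's genuine iterative scheme, not a single-shot estimate. First (Lemma~\ref{le42}) one applies Proposition~\ref{Carleman} with the quadratic weight $\psi_0(x)=|x-x^{(0)}|^2$ centered at a point $x^{(0)}\notin\overline\Omega$ close to $\Gamma$, and a radial cutoff $\chi(\psi_0/\delta^2)$; because $x^{(0)}$ is exterior, the inner part of the cutoff annulus lies outside $\Omega_h$ and produces no commutator, so one obtains a three-term estimate controlling $u$ on a small ball $B_h^1\subset\Omega_h$ near $\Gamma$ by $|\partial_n u|_{L^2_h(\Gamma_h)}$ and $\|u\|_{H^1_h(\Omega_h)}$, after optimizing in $s$ over $[s_0,\varepsilon_0/h]$. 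Second (Lemma~\ref{le43}) one repeats the Carleman estimate with weights $\psi_j(x)=|x-x^{(j)}|^2$ centered at interior points $x^{(j)}$ with ${\rm dist}(x^{(j)},\partial\Omega)\ge 4r$; now the cutoff vanishes near $\partial\Omega_h$, so there is no boundary term at all, and one gets an interpolation-type bound of $\|u\|_{H^1_h(B_h^{j+1})}$ by $\|u\|_{H^1_h(B_h^j)}$ and $\|u\|_{H^1_h(\Omega_h)}$. Iterating along a chain of overlapping balls covering $\omega_h(\varrho,3\varrho)$ (Lemma~\ref{le44}) and combining via Young's inequality, with $\tau$ introduced at the end, yields \eqref{ee}. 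The restriction $h\tau\le\widetilde\varepsilon$ arises because in each optimization step the extra discrete error term $e^{-C\varepsilon_0/h}\|u\|_{H^1_h(\Omega_h)}$ (coming from the upper bound $s\le\varepsilon_0/h$) has to be absorbed into $e^{-\alpha_1\tau}\|u\|_{H^1_h(\Omega_h)}$.
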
	
In order to prove Proposition \ref{prou},  we first introduce a cut-off function $\chi$ satisfying $0\leq\chi\leq1$, $\chi\in C^\infty(\mathbb R)$ and
\begin{align}\label{chi}
		\chi(p)=
	\begin{cases}
		0,\quad p\leq\frac{1}{2},~p\geq8,\\
		1,\quad \frac{3}{4}\leq p\leq 7.
	\end{cases}
\end{align}	
In addition, for $x^{(1)}\in \Omega$, we set $B^1:=B(x^{(1)},r)=\{x\in\mathbb R^d,|x-x^{(1)}|<r\}$ and $B_h^1:=B^1\cap\Omega_h$. We shall begin to estimate $u$ in $B_h^1$ near the 	boundary part $\Gamma_h$.
\begin{lemma}\label{le42}
	Let $u$ be a solution to (\ref{uniq}). Then there exist $B^1_h=B(x^{(1)},r)\cap\Omega_h$ and $m_0\in(0,1)$ such that the following estimate holds:
	\begin{align}\label{buch7}
		\|u\|_{H_h^1(B_h^1)}\leq C\left(e^{-\frac{C_2\varepsilon_0}{m_0h}}\|u\|_{H_h^1(\Omega_h)}+
		|\partial_nu|_{L_h^2(\Gamma_h)}\right)^{m_0}\|u\|_{H_h^1(\Omega_h)}^{1-m_0},
	\end{align}
	for some positive constants $C$ and $C_2$. Here $\varepsilon_0$ is the constant given in Proposition \ref{Carleman}.
\end{lemma}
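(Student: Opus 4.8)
The plan is to apply the discrete Carleman estimate of Proposition \ref{Carleman} to a spatially localized version of $u$, and then to optimize the Carleman parameter $s$ — which is constrained by $sh\le\varepsilon_0$ — over its admissible range: the largest admissible value $s\sim\varepsilon_0/h$ will generate the factor $e^{-C_2\varepsilon_0/(m_0h)}$, while intermediate values generate the interpolation exponents $m_0$ and $1-m_0$.

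\emph{Geometry and weight.} Since $\Gamma\subset\partial_i^{\pm}\Omega$ lies on a single face of the cube, I fix a point $x^{(0)}$ just outside $\overline{\widetilde\Omega}$, placed directly "above" a point $x^{(1)}\in\Gamma$ lying well inside that face, and set $\psi(x):=|x-x^{(0)}|^2$ (adding a constant if needed so that Assumption \ref{as31} holds; it does, since $\psi>0$ and $\nabla\psi=2(x-x^{(0)})\ne0$ on $\widetilde\Omega$). Then $\varphi=e^{-\lambda\psi}$ is largest near $\Gamma$ and decreases strictly along each ray issuing from $x^{(0)}$. I take $B^1=B(x^{(1)},r)$ with $r$ small, so $B_h^1$ is a small patch of $\Omega_h$ adjacent to $\Gamma_h$, and I choose $\chi_0\in C^\infty(\mathbb R^d)$, $0\le\chi_0\le1$, with: (i) $\chi_0\equiv1$ on a neighborhood of $\overline{B^1}$ together with a collar of width $\delta$ around $x^{(1)}$ along the face; (ii) $\mathrm{supp}\,\chi_0\subset\mathcal O$, and $\mathrm{supp}\,\chi_0$ meets $\partial\Omega$ only inside $\Gamma$ and away from the edges of the face; (iii) $\delta$ chosen moderately large, $\delta^2>2r\,\mathrm{dist}(x^{(0)},\Gamma)+r^2$, but still $\delta<\mathrm{dist}(x^{(1)},\partial\Gamma)$, which forces the whole transition set $\mathrm{supp}(\nabla\chi_0)$ to lie strictly farther from $x^{(0)}$ than $\overline{B^1}$. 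Consequently $a_2:=\sup_{\mathrm{supp}(\nabla\chi_0)}\varphi<a_1:=\min_{\overline{B^1}}\varphi<b_\Gamma$, where $b_\Gamma$ is the supremum of $\varphi$ over the inner shifted points adjacent to $\mathrm{supp}\,\chi_0\cap\partial\Omega$.

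\emph{Applying the Carleman estimate to $w=\chi_0u$.} The function $w$ vanishes on $\partial\Omega_h$, so Proposition \ref{Carleman} applies. Expanding with the discrete Leibniz rules of Lemma \ref{le21}, $(-\Delta_h+q)w=\chi_0f+[\text{discrete commutator}]$; as $\mathrm{supp}\,\chi_0\subset\mathcal O$ and $f=0$ there, $\chi_0f=0$, and the commutator is supported on a discrete fattening of $\mathrm{supp}(\nabla\chi_0)$, where $\varphi\le a_2$ for $h$ small. Each commutator term is a bounded coefficient times an average/difference of $u$ of order $\le1$, except one carrying $D_k^2u$ with an $O(h^2)$ coefficient (since $A_k^2\chi_0-\chi_0=\frac{h^2}{4}D_k^2\chi_0$ and $D_k\sigma^k=O(1)$ by Assumption \ref{simg}); and since $\Delta_hu=-qu$ on a neighborhood $K$ of $\mathrm{supp}(\nabla\chi_0)$ (again because $f=0$ there), a discrete interior / flat-Dirichlet $H_h^2$-estimate gives $\|u\|_{\mathring H_h^2(K)}\le C\|u\|_{H_h^1(\Omega_h)}$, so this term contributes only $O(h^4)e^{2sa_2}\|u\|_{H_h^1(\Omega_h)}^2$. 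Hence the right-hand side of (\ref{ce}) is $\le Ce^{2sa_2}\|u\|_{H_h^1(\Omega_h)}^2+se^{2sb_\Gamma}|\partial_nu|_{L_h^2(\Gamma_h)}^2$, using that on $\partial\Omega_h$, $\partial_nw$ is supported in $\Gamma_h$ where it equals $\chi_0(\cdot\mp he_i)\partial_nu$ with $|\chi_0|\le1$. The left-hand side is $\ge cse^{2sa_1}\|u\|_{H_h^1(B_h^1)}^2$ since $\chi_0\equiv1$ near $\overline{B^1}$ and $\varphi\ge a_1$ there. Dividing, for all $s_0(\lambda)\le s\le\varepsilon_0/h$,
\begin{equation*}
\|u\|_{H_h^1(B_h^1)}^2\le C\left(e^{-2s(a_1-a_2)}\|u\|_{H_h^1(\Omega_h)}^2+e^{2s(b_\Gamma-a_1)}|\partial_nu|_{L_h^2(\Gamma_h)}^2\right).
\end{equation*}

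\emph{Optimization.} Writing $G=\|u\|_{H_h^1(\Omega_h)}$ and $E=|\partial_nu|_{L_h^2(\Gamma_h)}$: when $(b_\Gamma-a_2)^{-1}\ln(G/E)\in[s_0,\varepsilon_0/h]$ I plug in this $s$, the two terms balance, and I obtain $\|u\|_{H_h^1(B_h^1)}\le CG^{1-m_0}E^{m_0}$ with $m_0:=(a_1-a_2)/(b_\Gamma-a_2)\in(0,1)$; when it exceeds $\varepsilon_0/h$ (i.e.\ $E$ is extremely small) I take $s=\varepsilon_0/h$, the $E$-term is then dominated by $e^{-2(\varepsilon_0/h)(a_1-a_2)}G^2$, and $\|u\|_{H_h^1(B_h^1)}\le Ce^{-C_2\varepsilon_0/h}G$ with $C_2:=a_1-a_2$; when it falls below $s_0$ one has $E\ge cG$ and the trivial bound $\|u\|_{H_h^1(B_h^1)}\le G$ suffices. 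Combining the three cases yields precisely (\ref{buch7}), with this $m_0\in(0,1)$, $C_2>0$ and $h_0=\varepsilon_0/s_0(\lambda)$. I expect the main obstacle to lie in the geometric step: arranging the weight $\psi$ and the cutoff $\chi_0$ so that, in spite of the cube's flat faces and edges, $\mathrm{supp}(\nabla\chi_0)$ sits at a strictly lower weight level than $B^1$ while the boundary contribution stays confined to $\Gamma_h$ — together with the bookkeeping for the genuinely discrete commutator, especially absorbing the $D_k^2u$ term through the discrete elliptic estimate near the flat Dirichlet boundary.
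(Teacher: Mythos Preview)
Your proposal is correct and follows the same strategy as the paper: apply Proposition~\ref{Carleman} to a cutoff of $u$ localized near $\Gamma$, compare the weight levels on $B_h^1$ versus the commutator support versus the boundary, and optimize over $s$ in the three ranges determined by $[s_0,\varepsilon_0/h]$.

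Two points where the paper is more economical. First, instead of a generic cutoff $\chi_0$ with the somewhat delicate geometric conditions (i)--(iii), the paper takes the \emph{radial} cutoff $w=\chi(\psi_0/\delta^2)u$ with $\psi_0(x)=|x-x^{(0)}|^2$ and $\chi$ as in (\ref{chi}). The transition region is then automatically the annulus $\{7\delta^2\le\psi_0\le 8\delta^2\}$ (so $\varphi_0\le e^{-6\lambda_0}$ there), the ``good'' region $E_h^1=\{\delta^2\le\psi_0\le 4\delta^2\}$ has $\varphi_0\ge e^{-4\lambda_0}$, and on $\Gamma_h$ one has $\varphi_0\le e^{-\lambda_0}$; no separate argument about ``collars'' or the flatness of the face is needed. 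The paper also places $x^{(1)}$ in the \emph{interior} of $\Omega$ (with $\mathrm{dist}(x^{(1)},\Gamma_h)\ge 4r$), which is what the subsequent chain-of-balls Lemma~\ref{le43} requires; your choice $x^{(1)}\in\Gamma$ would have to be shifted inward before that step.

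Second, your concern about the $D_k^2u$ commutator term and a ``discrete interior/flat-Dirichlet $H_h^2$-estimate'' is unnecessary. The remainder actually reads $\frac{h^2}{4}\sum_k(D_k^2\chi_0)D_k(\sigma^kD_ku)$, and since it carries an $h^2$ prefactor, the inverse inequality $(D_kg)^2\le\frac{4}{h^2}A_k(g^2)$ from Corollary~\ref{co22} with $g=\sigma^kD_ku$ already gives $h^2\|D_k(\sigma^kD_ku)\|_{L_h^2}^2\le C\|D_ku\|_{L_h^2}^2$, so this term is controlled by $\|u\|_{H_h^1(\Omega_h)}$ with no elliptic regularity needed.
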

\begin{remark}
Compared with the continuous case, there is one extra term $e^{-\frac{C_2\varepsilon_0}{m_0h}}\|u\|_{H_h^1(\Omega_h)}$  due to the restriction $sh\leq\varepsilon_0$ of the discrete Carleman estimate in Proposition \ref{Carleman}.

\end{remark}
\begin{proof}[Proof of Lemma \ref{le42}]
	Let us choose $\delta>0$ and $x^{(0)}\in\mathbb R^d\backslash\overline\Omega$ such that
\begin{align}\label{e47}
	\delta<\frac{\varrho}{4},\quad \overline{B(x^{(0)},\delta)}\cap\overline\Omega=\emptyset,\quad B(x^{(0)},2\delta)\cap\Omega\neq\emptyset,\quad B(x^{(0)},4\delta)\cap\partial\Omega\subset\Gamma.
\end{align}	
That is, $x^{(0)}$ is an outer point of $\overline\Omega$ and is near $\Gamma$. We define the functions $\psi_0(x)$ and $\varphi_0(x)$ by:
\begin{align*}
	\psi_0(x)=|x-x^{(0)}|^2,\quad \varphi_0(x)=e^{-\frac{\lambda_0}{\delta^2}\psi_0(x)},
\end{align*}	
where $\lambda_0$ is the constant given in Proposition \ref{Carleman}.
Denote $w=\chi(\frac{\psi_0}{\delta^2})u$. Taking into account $u=0$ on $\partial\Omega_h$ and applying the discrete Carleman estimate (\ref{ce}), we obtain
\begin{align}\label{e48}
\begin{split}
	&s^3\|e^{s\varphi_0}w\|_{L^2_h(\Omega_h)}^2+s\sum_{k=1}^d\int_{\Omega_{h,k}^*}e^{2s\varphi_0}|D_kw|^2\\
	\leq &C\left(\|e^{s\varphi_0}(-\Delta_h+q)w\|_{L^2_h(\Omega_h)}^2+s\sum_{k=1}^d\int_{\partial_k\Omega_h}t_r^k(e^{2s\varphi_0})|\partial_nw|^2\right),
	\end{split}
\end{align}	
for $s_0\leq s\leq\frac{\varepsilon_0}{h}$.	 By (\ref{chi}) and (\ref{e47}), we have
\begin{align}\label{e49}
	s^3\|e^{s\varphi_0}w\|_{L^2_h(\Omega_h)}^2+s\sum_{k=1}^d\int_{\Omega_{h,k}^*}e^{2s\varphi_0}|D_kw|^2\geq & s^3\|e^{s\varphi_0}w\|_{L^2_h(E^1_h)}^2+s\sum_{k=1}^d\int_{(E^1_h)_k^*}e^{2s\varphi_0}|D_kw|^2\nonumber\\
	\geq &se^{2se^{-4\lambda_0}}\left(s^2\|u\|^2_{L^2_h(E^1_h)}+\|u\|^2_{\mathring H^1_h(E^1_h)}\right),
\end{align}
where $E^1_h:=\{x\in\Omega_h,\delta^2\leq\psi_0(x)\leq4\delta^2\}$.
In view of  $A_k^2\chi=\chi+\frac{h^2}{4}D_k^2\chi$ and
(\ref{uniq}), we get
\begin{align}\label{e410}
	(-\Delta_h+q)w=&-\sum_{k=1}^d\left[D_k(\sigma^kD_k\chi)A_k^2u+(D_kA_ku)A_k(\sigma^kD_k\chi)+(D_kA_k\chi) A_k(\sigma^kD_ku)\right]\nonumber\\
	&-\sum_{k=1}^d(\chi+\frac{h^2}{4}D_k^2\chi)D_k(\sigma^kD_ku)
	+q\chi u\nonumber\\
	\begin{split}
	=&-\sum_{k=1}^d\left[D_k(\sigma^kD_k\chi)A_k^2u+(D_kA_ku)A_k(\sigma^kD_k\chi)+(D_kA_k\chi) A_k(\sigma^kD_ku)\right]\\
	&-\frac{h^2}{4}\sum_{k=1}^d (D_k^2\chi) D_k(\sigma^kD_ku),
	\end{split}
\end{align}
for all $x\in\Omega_h$, where we have used
 $f\chi =0$ in $\Omega_h$ by noting  $f=0$ in $\mathcal O_h$ and (\ref{chi}).
Taking (\ref{e47}) into account, we see that $|x-x^{(0)}|>\delta$ for all $x\in \overline\Omega_h$, so that we obtain from (\ref{chi}) and (\ref{e410}) that
\begin{align*}
	\|e^{s\varphi_0}(-\Delta_h+q)w\|_{L^2_h(\Omega_h)}^2=&\|-e^{s\varphi_0}\sum_{k=1}^d\left[D_k(\sigma^kD_k\chi)A_k^2u+(D_kA_ku)A_k(\sigma^kD_k\chi)\right.
\\
&\left.+(D_kA_k\chi) A_k(\sigma^kD_ku)\right]-\frac{h^2}{4}e^{s\varphi_0}\sum_{k=1}^d (D_k^2\chi) D_k(\sigma^kD_ku)\|_{L^2_h(E^2_h)}^2\\
\leq &Ce^{2se^{-6\lambda_0}}\sum_{k=1}^d\left(\|A_k^2u\|_{L^2_h(\Omega_h)}^2+\|D_kA_ku\|_{L^2_h(\Omega_h)}^2+\|A_k(\sigma^kD_ku)\|_{L^2_h(\Omega_h)}^2\right)\\
&+Ce^{2se^{-6\lambda_0}}\|\sum_{k=1}^dD_k(\sigma^kD_ku)\|_{L^2_h(E^2_h)}^2,
\end{align*}
where $E^2_h:=	\{x\in\Omega_h,6\delta^2\leq\psi_0(x)\leq9\delta^2\}$. Recalling that
$f=0$ in $\mathcal O_h\supset\omega_h(8\varrho)\supset E^2_h$, we get
$$\|\sum_{k=1}^dD_k(\sigma^kD_ku)\|_{L^2_h(E^2_h)}^2=\|qu\|_{L^2_h(E^2_h)}^2\leq C\|u\|_{L^2_h(\Omega_h)}^2.$$
Furthermore, using Corollary \ref{co22} and (\ref{ak}), we have
\begin{align*}
	\|A_k^2u\|_{L^2_h(\Omega_h)}^2&\leq\int_{\Omega_h}A_k(|A_ku|^2)=\int_{\Omega_{h,k}^*}|A_ku|^2-\frac{h}{2}\int_{\partial_k\Omega_h}t_r^k(|A_ku|^2)
	\\&\leq \int_{\Omega_{h,k}^*}|A_ku|^2\leq\int_{\Omega_{h,k}^*}A_k(|u|^2)=\int_{\Omega_h}|u|^2.
\end{align*}
In the last equality, we have  used (\ref{ak}) by virtue of
 $u=0$ on $\partial_k\Omega_h$. Similarly, we have
$$\|D_kA_ku\|_{L^2_h(\Omega_h)}^2\leq\int_{\Omega_{h,k}^*}|D_ku|^2,\quad \|A_k(\sigma^kD_ku)\| _{L^2_h(\Omega_h)}^2\leq C\int_{\Omega_{h,k}^*}|D_ku|^2.$$
Then, we get
\begin{align}\label{e411}
	\|e^{s\varphi_0}(-\Delta_h+q)w\|_{L^2_h(\Omega_h)}^2\leq Ce^{2se^{-6\lambda_0}}\|u\|_{H^1_h(\Omega_h)}^2.
\end{align}
Taking (\ref{e47}) into account, we see that  $|x-x^{(0)}|>4\delta$ for all $x\in \partial\Omega_h\backslash\Gamma_h$ and $|x-x^{(0)}|>\delta$ for all $x\in \Gamma_h$, so that we obtain
\begin{align}\label{e412}
	s\sum_{k=1}^d\int_{\partial_k\Omega_h}t_r^k(e^{2s\varphi_0})|\partial_nw|^2&= s\int_{\Gamma_h}t_r^k(e^{2s\varphi_0})|\partial_nw|^2\nonumber\\
	&\leq se^{2se^{-\lambda_0}}\int_{\Gamma_h}|t_r^i(\sigma^iD_i(\chi u))n_i|^2\nonumber\\
	&\leq Cse^{2se^{-\lambda_0}}\int_{\Gamma_h}\left(|t_r^i(\sigma^iD_i\chi A_i u)|^2+|t_r^i(\sigma^iA_i\chi D_i u)|^2\right)\nonumber\\
	&\leq Cse^{2se^{-\lambda_0}}\int_{\Gamma_h}\left(|t_r^i(\sigma^i)t_r^i(A_i u)|^2+|t_r^i(\sigma^i D_i u)|^2\right)\nonumber\\
	&\leq Cse^{2se^{-\lambda_0}} |\partial_nu|^2_{L_h^2(\Gamma_h)},
\end{align}
since $\Gamma_h\subset\partial^\pm_i\Omega_h$ and $t_r^i(A_iv)=-\frac{h}{2}t_r^i(D_iv)n_i$  when $v=0$ on $\Gamma_h$.

Combining (\ref{e48}) with (\ref{e49}), (\ref{e411}) and (\ref{e412}), we obtain
\begin{align}\label{buch1}
	se^{2se^{-4\lambda_0}}\left(s^2\|u\|^2_{L^2_h(E^1_h)}+\|u\|^2_{\mathring H^1_h(E^1_h)}\right)\leq C\left(se^{2se^{-\lambda_0}} |\partial_nu|^2_{L_h^2(\Gamma_h)}+e^{2se^{-6\lambda_0}}\|u\|_{H^1_h(\Omega_h)}^2\right),
\end{align}
where $E^1_h:=\{x\in\Omega_h,\delta^2\leq\psi_0(x)\leq4\delta^2\}$.	

We can select $r>0$ and $x^{(1)}\in\Omega$ such that
\begin{align}\label{e413}
	{\rm dist}(x^{(1)},\Gamma_h)\geq4r, \quad  B^1_h:=B(x^{(1)},r)\cap \Omega_h\subset E_h^1.
\end{align}
This is possible because the second condition in (\ref{e47}) implies the existence of 	 $x^{(1)}\in\Omega$ such that $|x^{(1)}-x^{(0)}|<2\delta$. Therefore, for sufficiently small $r>0$, condition (\ref{e413}) is satisfied.  From (\ref{buch1}), we can obtain that    for $s_0\leq s\leq\frac{\varepsilon_0}{h}$, there exist constants $C_1,C_2>0$ such that
\begin{align}\label{buch2}
	\|u\|_{H^1_h(B^1_h)}^2\leq C\left(e^{C_1s}|\partial_nu|^2_{L_h^2(\Gamma_h)}+e^{-C_2s}\|u\|_{H^1_h(\Omega_h)}^2 \right).
\end{align}
If $\frac{1}{C_1+C_2}\log{\frac{\|u\|_{H^1_h(\Omega_h)}^2}{|\partial_nu|^2_{L_h^2(\Gamma_h)}}}\in [s_0,\frac{\varepsilon_0}{h}]$, then take $s=\frac{1}{C_1+C_2}\log{\frac{\|u\|_{H^1_h(\Omega_h)}^2}{|\partial_nu|^2_{L_h^2(\Gamma_h)}}}$ and we obtain
\begin{align}\label{buch3}
	\|u\|_{H^1_h(B^1_h)}^2\leq C|\partial_nu|^{2m_0}_{L_h^2(\Gamma_h)}\|u\|_{H^1_h(\Omega_h)}^{2(1-m_0)} ,\quad m_0=\frac{C_2}{C_1+C_2}.
\end{align}	
If $\frac{1}{C_1+C_2}\log{\frac{\|u\|_{H^1_h(\Omega_h)}^2}{|\partial_nu|^2_{L_h^2(\Gamma_h)}}}\leq s_0$, then
\begin{align*}
	\|u\|_{H^1_h(\Omega_h)}^2\leq C|\partial_nu|^{2}_{L_h^2(\Gamma_h)}.
\end{align*}	
Therefore,
\begin{align}\label{buch4}
	\|u\|_{H^1_h(B^1_h)}^2\leq\|u\|_{H^1_h(\Omega_h)}^2=\|u\|_{H^1_h(\Omega_h)}^{2m_0}\|u\|_{H^1_h(\Omega_h)}^{2(1-m_0)}\leq C|\partial_nu|^{2m_0}_{L_h^2(\Gamma_h)}\|u\|_{H_h^1(\Omega_h)}^{2(1-m_0)}.
\end{align}	
If $\frac{1}{C_1+C_2}\log{\frac{\|u\|_{H^1_h(\Omega_h)}^2}{|\partial_nu|^2_{L_h^2(\Gamma_h)}}}\geq \frac{\varepsilon_0}{h}$, then
\begin{align*}
	|\partial_nu|^{2}_{L_h^2(\Gamma_h)}\leq e^{-\frac{\varepsilon_0}{h}(C_1+C_2)}\|u\|_{H^1_h(\Omega_h)}^2 .
\end{align*}	
 In view of (\ref{buch2}), we have
\begin{align}\label{buch5}
	\|u\|_{H^1_h(B^1_h)}^2&\leq C\left(e^{C_1\frac{\varepsilon_0}{h}-\frac{\varepsilon_0}{h}(C_1+C_2)}\|u\|_{H^1_h(\Omega_h)}^2+e^{-C_2\frac{\varepsilon_0}{h}}\|u\|_{H^1_h(\Omega_h)}^2 \right)\nonumber\\
	&\leq Ce^{-C_2\frac{\varepsilon_0}{h}}\|u\|_{H^1_h(\Omega_h)}^{2m_0} \|u\|_{H^1_h(\Omega_h)}^{2(1-m_0)}.
\end{align}
 From (\ref{buch3})--(\ref{buch5}), we get (\ref{buch7}).
Thus, we complete the proof of Lemma \ref{le42}.
	
\end{proof}	
	
Next we extend the estimate from $B_h^1$ to $	\omega_h(\varrho, 3\varrho)$. To accomplish this, we adapt the techniques developed in \cite{robbiano1995fonction} to our discrete case. Let $B(x^{(j)},r), 2\leq j\leq N_0$, be a finite covering of $	\omega(\varrho, 3\varrho)$, such that $B_h(x^{(j)},r)=B(x^{(j)},r)\cap\Omega_h, 2\leq j\leq N_0$ is a finite covering of $\omega_h(\varrho, 3\varrho)=\omega(\varrho, 3\varrho)\cap\Omega_h$. We can assume that $x^{(j)}$ satisfies dist$(x^{(j)},\partial O)\geq 4r$. In the sequel, we assume without loss of generality that
\begin{align}
	B(x^{(j+1)},r)\subset B(x^{(j)},2r),
\end{align}
and we set $B^j=B(x^{(j)},r),	B^j_h=B(x^{(j)},r)\cap\Omega_h$, $2\leq j\leq N_0$.
\begin{lemma}\label{le43}
Let $u$ be a solution to (\ref{uniq}). Then there exist  $m\in(0,1)$ and $C, C_4>0$ such that the following estimate holds:
	\begin{align}\label{e415}
		\|u\|_{H_h^1(B_h^{j+1})}\leq C\left(e^{-\frac{C_4\varepsilon_0}{m h}}\|u\|_{H_h^1(\Omega_h)}+\|u\|_{H_h^1(B_h^{j})}\right)^{m}\|u\|_{H_h^1(\Omega_h)}^{1-m},\quad 1\leq j\leq N_0-1.
	\end{align}
	 Here $\varepsilon_0$ is the constant given in Proposition \ref{Carleman}.
\end{lemma}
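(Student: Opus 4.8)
The plan is to mimic the proof of Lemma \ref{le42}, but now use a weight function centered at an interior point so that we can propagate the estimate across the chain of balls $B^j_h$. Fix $j$ with $1\leq j\leq N_0-1$. The key geometric observation is that, by the assumption $B(x^{(j+1)},r)\subset B(x^{(j)},2r)$ and $\mathrm{dist}(x^{(j)},\partial\mathcal O)\geq 4r$, we may choose an interior point $\widetilde x^{(j)}$ (close to $x^{(j)}$, but slightly pushed so that the sphere $\{\psi_j=\delta^2\}$ separates the ``known'' region $B^j_h$ from the ``unknown'' region containing $B^{j+1}_h$) together with $\delta>0$ small so that, with
\begin{align*}
	\psi_j(x)=|x-\widetilde x^{(j)}|^2,\qquad \varphi_j(x)=e^{-\frac{\lambda_0}{\delta^2}\psi_j(x)},
\end{align*}
the level sets of $\varphi_j$ are arranged exactly as in Lemma \ref{le42}: on the shell where $\psi_j$ is largest (where we lose control) the function $u$ and its derivatives are already bounded by $\|u\|_{H^1_h(B^j_h)}$, on an intermediate shell we recover $\|u\|_{H^1_h(B^{j+1}_h)}$, and the cut-off $\chi(\psi_j/\delta^2)$ kills the forcing term because $f=0$ on $\mathcal O_h$ and the support of $\nabla\chi$ stays inside $\mathcal O_h$. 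First I would record this choice of $\widetilde x^{(j)}$, $\delta$, $r$ precisely, observing that only finitely many $j$ occur so a uniform $\delta$ and uniform constants can be chosen.

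Next I would set $w=\chi(\psi_j/\delta^2)u$ and apply the discrete Carleman estimate (\ref{ce}) with weight $e^{s\varphi_j}$, for $s_0\leq s\leq \varepsilon_0/h$. Since $w$ vanishes near $\partial\Omega$ (the support of $\chi(\psi_j/\delta^2)$ is a compact subset of $\mathcal O\Subset\Omega$, so in particular $w=0$ on $\partial\Omega_h$ and, crucially, the boundary term on $\partial_k\Omega_h$ vanishes entirely), the right-hand side of (\ref{ce}) reduces to $\|e^{s\varphi_j}(-\Delta_h+q)w\|^2_{L^2_h(\Omega_h)}$. As in (\ref{e410}), expanding $(-\Delta_h+q)w$ via Lemma \ref{le21} produces only commutator terms supported where $\nabla\chi\neq 0$; that support is contained in the ``outer'' shell where $\psi_j$ is comparable to its maximal value, so these terms are bounded (after using Corollary \ref{co22} and (\ref{ak}) to pass from $A_k,D_kA_k$ applied to $u$ back to $\|u\|_{H^1_h}$ norms on that shell) by $C e^{2s\varphi_{\min}}\|u\|^2_{H^1_h(B^j_h)}$ for the appropriate minimal value $\varphi_{\min}$ of $\varphi_j$ on that shell. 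On the left-hand side I would restrict the integral to the intermediate shell containing $B^{j+1}_h$, where $\varphi_j\geq \varphi_{\mathrm{mid}}>\varphi_{\min}$, to get a lower bound $c\, s\, e^{2s\varphi_{\mathrm{mid}}}\|u\|^2_{H^1_h(B^{j+1}_h)}$. This yields, for suitable constants $C_3,C_4>0$,
\begin{align*}
	\|u\|^2_{H^1_h(B^{j+1}_h)}\leq C\left(e^{C_3 s}\|u\|^2_{H^1_h(B^j_h)}+e^{-C_4 s}\|u\|^2_{H^1_h(\Omega_h)}\right),\qquad s_0\leq s\leq \tfrac{\varepsilon_0}{h}.
\end{align*}

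Finally I would optimize in $s$ exactly as in the passage from (\ref{buch2}) to (\ref{buch7}): if $\tfrac{1}{C_3+C_4}\log\!\big(\|u\|^2_{H^1_h(\Omega_h)}/\|u\|^2_{H^1_h(B^j_h)}\big)$ lies in $[s_0,\varepsilon_0/h]$, choose it as $s$ to obtain the interpolation inequality with $m=C_4/(C_3+C_4)$; if it is $\leq s_0$ the estimate is trivial after bounding $\|u\|_{H^1_h(B^j_h)}$ below by a constant multiple of $\|u\|_{H^1_h(\Omega_h)}$; and if it is $\geq \varepsilon_0/h$, take $s=\varepsilon_0/h$ in the displayed inequality, which produces the extra term $e^{-C_4\varepsilon_0/(mh)}\|u\|_{H^1_h(\Omega_h)}$ appearing in (\ref{e415}). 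Collecting the three cases gives (\ref{e415}) with $m\in(0,1)$ and constants independent of $h$ and of $j$. The main obstacle is the careful geometric bookkeeping of the level sets of $\varphi_j$: one must verify that the ``bad'' shell (where Carleman control degenerates) can be placed over $B^j_h$ while the ``good'' shell covers $B^{j+1}_h$, uniformly over the finitely many indices $j$, which is precisely where the containment hypothesis $B(x^{(j+1)},r)\subset B(x^{(j)},2r)$ and the choice of the shifted center $\widetilde x^{(j)}$ are used.
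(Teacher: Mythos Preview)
Your overall plan --- apply the Carleman estimate to $w=\chi(\psi_j/\delta^2)u$ with a weight centered at an interior point, so that the boundary term vanishes, then optimize in $s$ exactly as in Lemma~\ref{le42} --- is the same as the paper's, and your treatment of the optimization step (the three cases for $s$) is correct. But the geometric bookkeeping you describe is inverted, and as written your displayed inequality does not follow from the estimates you wrote down.

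The weight $\varphi_j=e^{-\lambda_0\psi_j/\delta^2}$ is \emph{largest} where $\psi_j$ is \emph{smallest}, i.e.\ on the inner shell near the center. The cutoff $\chi$ has derivatives on \emph{both} the inner shell ($\psi_j\sim \tfrac12\delta^2$) and the outer shell ($\psi_j\sim 8\delta^2$), so the commutator $(-\Delta_h+q)w$ lives on two pieces, not one. The inner piece carries the \emph{large} weight and is the one that must sit inside $B^j_h$; it produces the $e^{C_3 s}\|u\|^2_{H^1_h(B^j_h)}$ term. The outer piece carries the small weight and is bounded by the global norm, producing $e^{-C_4 s}\|u\|^2_{H^1_h(\Omega_h)}$. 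You have written the outer shell (where $\psi_j$ is largest, $\varphi_j=\varphi_{\min}$) as the one controlled by $\|u\|_{H^1_h(B^j_h)}$; with that assignment, and with only one shell accounted for, what you actually derive is $\|u\|^2_{H^1_h(B^{j+1}_h)}\leq C e^{-2s(\varphi_{\mathrm{mid}}-\varphi_{\min})}\|u\|^2_{H^1_h(B^j_h)}$, which is not the inequality you then display and is in fact false in general.

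Two further points. First, no shifted center $\widetilde x^{(j)}$ is needed: the paper takes $\psi_j(x)=|x-x^{(j)}|^2$ and $\varphi_j=e^{-\lambda_0\psi_j/r^2}$, so that the inner shell $\{\tfrac14 r^2\le\psi_j\le r^2\}$ is automatically contained in $B^j_h$. Second, the intermediate region on which the Carleman left side is bounded below is an \emph{annulus} $\{r^2\le\psi_j\le 5r^2\}$, which need not contain $B^{j+1}_h\subset B(x^{(j)},2r)$ in full; the paper handles this by adding $\|u\|^2_{H^1_h(\{\psi_j\le r^2\})}$ to both sides (it is dominated by the inner-shell term on the right) to upgrade the annulus to the full ball $\{\psi_j\le 5r^2\}\supset B^{j+1}_h$. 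With the shells assigned correctly and this last step inserted, your argument goes through.
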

\begin{proof}
	In order to prove (\ref{e415}),
we define the functions $\psi_j(x)$ and $\varphi_j(x)$ by:
\begin{align*}
	\psi_j(x)=|x-x^{(j)}|^2,\quad \varphi_j(x)=e^{-\frac{\lambda_0}{r^2}\psi_j(x)},
\end{align*}		
 where $\lambda_0$ is the constant given in Proposition \ref{Carleman}.
Moreover, we set $w=\chi(\frac{\psi_j}{r^2})u$. Taking  (\ref{chi}) and dist$(x^{(j)},\partial\Omega)\geq 4r$ into account, we have $w=\partial_nw=0$ on $\partial\Omega_h$. By applying the discrete Carleman estimate (\ref{ce}), we obtain, for $1\leq s_0\leq s\leq\frac{\varepsilon_0}{h}$,
\begin{align}
\begin{split}\label{e421a}
	s^3\|e^{s\varphi_j}w\|_{L^2_h(\Omega_h)}^2+s\sum_{k=1}^d\int_{\Omega_{h,k}^*}e^{2s\varphi_j}|D_kw|^2
	\leq C\|e^{s\varphi_j}(-\Delta_h+q)w\|_{L^2_h(\Omega_h)}^2.
	\end{split}
\end{align}
	In a similar way as the proof of (\ref{e48})  by (\ref{buch1}) in  Lemma \ref{le42}, we can prove
\begin{align*}
	se^{2se^{-5 \lambda_0}}\left(\|u\|^2_{L^2_h(E_h^4	)}+\|u\|^2_{\mathring H^1_h(E_h^4)}\right)\leq C\left(e^{2se^{-\frac{\lambda_0}{4}}}\|u\|_{H^1_h(E_h^5)}^2+e^{2se^{-6  \lambda_0}}\|u\|_{H^1_h(E_h^6)}^2\right),
\end{align*}		
where $E_h^4:=\{x\in\Omega_h,r^2\leq\psi_j(x)\leq5r^2\}$ , $E_h^5:=\{x\in\Omega_h, \frac{1}{4}r^2\leq\psi_j(x)\leq r^2\}$ and $E_h^6:=\{x\in\Omega_h, 6r^2\leq\psi_j(x)\leq 9 r^2\}$.	Let $E_h^7=\{x\in\Omega_h, \psi_j(x)\leq 5r^2\}$ and $E_h^8=\{x\in\Omega_h, \psi_j(x)\leq  r^2\}$. Hence, by (\ref{e421a}) and noting $E_h^7=E_h^8\cup E_h^4$, $E_h^5\subset E_h^8$, we obtain
\begin{align*}
	se^{2se^{-5\lambda_0}}\|u\|^2_{H^1_h(E_h^7)} &\leq se^{2se^{-5 \lambda_0}}\left(\|u\|^2_{H^1_h(E_h^8)}+\|u\|^2_{H^1_h(E_h^4)}\right)\\
&\leq C\left(se^{2se^{-5\lambda_0}}\|u\|^2_{H^1_h(E_h^8)}+e^{2se^{-\frac{\lambda_0}{4}}}\|u\|_{H^1_h(E_h^5)}^2+e^{2se^{-6\lambda_0}}\|u\|_{H^1_h(E_h^6)}^2\right)\\
	&\leq C\left(e^{2se^{-\frac{\lambda_0}{4}}}\|u\|_{H^1_h(E_h^8)}^2+e^{2se^{-6\lambda_0}}\|u\|_{H^1_h(E_h^6)}^2\right).
\end{align*}

Thus we obtain
\begin{align*}
	\|u\|^2_{H^1_h(E_h^7)}\leq C\left(e^{C_3s}\|u\|_{H^1_h(E_h^8)}^2+e^{-C_4s}\|u\|_{H^1_h(\Omega_h)}^2\right),
\end{align*}	
for $s_0\leq s\leq\frac{\varepsilon_0}{h}$ and  some constants $C_3,C_4>0$.
Similarly, minimizing the right-hand side with respect to $s$, for some $m\in(0,1)$, we have
\begin{align*}
	\|u\|_{H^1_h(E_h^7)}\leq C\left(e^{-\frac{C_4\varepsilon_0}{m h}}\|u\|_{H_h^1(\Omega_h)}+\|u\|_{H^1_h(E_h^8)}\right)^{m}\|u\|_{H^1_h(\Omega_h)}^{(1-m)}.
\end{align*}	
Since $$B^{j+1}_h\subset E_h^7,\quad \quad E_h^8\subset B^j_h,$$
we obtain (\ref{e415}). This completes the proof of  Lemma \ref{le43}.

\end{proof}

\begin{lemma}\label{le44}
Let $u$ be a solution to (\ref{uniq}). Then there exists  constant $C>0$  such that
	\begin{align}
		\|u\|_{H_h^1(B_h^{j})}\leq C\left(e^{-\frac{C_4\varepsilon_0}{m h}}\|u\|_{H_h^1(\Omega_h)}+\|u\|_{H_h^1(B_h^{1})}\right)^{m^j}\|u\|_{H_h^1(\Omega_h)}^{1-m^j},\quad 2\leq j\leq N_0.
	\end{align}
	Here $m\in(0,1)$ is the constant given in Lemma \ref{le43}.
\end{lemma}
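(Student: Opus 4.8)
The plan is to derive Lemma~\ref{le44} by iterating the one-step interpolation inequality of Lemma~\ref{le43} along the chain of balls $B_h^1,B_h^2,\dots,B_h^{N_0}$. To lighten notation, set $M:=\|u\|_{H_h^1(\Omega_h)}$, $A_j:=\|u\|_{H_h^1(B_h^j)}$ and $E:=e^{-\frac{C_4\varepsilon_0}{mh}}M$, where $m\in(0,1)$ and $C,C_4$ are the constants furnished by Lemma~\ref{le43}. Two elementary observations will be used throughout: first, since $B_h^1\subset\Omega_h$ and the exponential prefactor is at most $1$, one has $A_1\le M$ and $E\le M$, hence $E+A_1\le 2M$; second, again because $E\le M$, for every $t\in[0,1]$ we have $E=E^tE^{1-t}\le E^tM^{1-t}\le (E+A_1)^tM^{1-t}$, and it is this last inequality that lets the ``discrete defect'' term $E$ be carried through the iteration without spoiling the interpolation structure.

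Concretely, I would establish by induction on $j$, for $2\le j\le N_0$, the bound
\begin{align}\label{indu}
	A_j\le C_j\,(E+A_1)^{m^j}\,M^{1-m^j},
\end{align}
with $C_j$ depending only on $j$, $m$ and the constant $C$ of Lemma~\ref{le43}. For the base case $j=2$, Lemma~\ref{le43} applied with $j=1$ gives $A_2\le C(E+A_1)^mM^{1-m}$; splitting $(E+A_1)^m=(E+A_1)^{m^2}(E+A_1)^{m-m^2}$ and bounding the last factor by $(2M)^{m-m^2}$ via $E+A_1\le 2M$ turns this into (\ref{indu}) for $j=2$. For the inductive step, assuming (\ref{indu}) at level $j$, the second observation with $t=m^j$ yields $E\le(E+A_1)^{m^j}M^{1-m^j}$ and therefore $E+A_j\le(1+C_j)(E+A_1)^{m^j}M^{1-m^j}$; inserting this into Lemma~\ref{le43} applied at index $j$ gives
\begin{align*}
	A_{j+1}\le C(E+A_j)^mM^{1-m}\le C(1+C_j)^m(E+A_1)^{m^{j+1}}M^{(1-m^j)m+1-m},
\end{align*}
and since $(1-m^j)m+1-m=1-m^{j+1}$ this is precisely (\ref{indu}) at level $j+1$, with $C_{j+1}=C(1+C_j)^m$.

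Since the recursion runs only over the finitely many indices $j=2,\dots,N_0$, the constants $C_j$ remain bounded by a single constant $C$ depending on $N_0$, $m$ and the constant of Lemma~\ref{le43}, which gives the asserted estimate. The argument is essentially bookkeeping: the only points that require a little care are the exponent identity $(1-m^j)m+(1-m)=1-m^{j+1}$ and the elementary inequality $E\le(E+A_1)^{m^j}M^{1-m^j}$; I do not expect a genuine obstacle here, since the substantive analysis has already been done in Lemmas~\ref{le42} and~\ref{le43}.
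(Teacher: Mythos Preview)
Your proposal is correct and follows essentially the same approach as the paper: both iterate the one-step inequality of Lemma~\ref{le43} along the chain of balls. The only difference is that the paper packages this iteration by citing Lemma~4 of \cite{lebeau1994controle} (which yields the slightly stronger statement $\alpha_j\le 2^{1/(1-m)}B^{1-\mu}(\alpha_1+A)^{\mu}$ for all $\mu\in(0,m^j]$), whereas you unroll the induction by hand and track the constants $C_j$ explicitly; your version is self-contained and avoids the external reference, at the cost of proving only the exponent $m^j$ actually stated in the lemma.
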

\begin{proof}	
Put
$$\alpha_j=	\|u\|_{H_h^1(B_h^{j})},\quad A=e^{-\frac{C_4\varepsilon_0}{m h}}\|u\|_{H_h^1(\Omega_h)}, \quad B=C^\frac{1}{1-m}\|u\|_{H_h^1(\Omega_h)},$$
by (\ref{e415}) we have
$$\alpha_{j+1}\leq B^{1-m}(\alpha_j+A)^m,\quad 1\leq j\leq N_0-1.$$
Then,  applying Lemma 4 in \cite{lebeau1994controle}, we obtain, for all $\mu\in(0,m^j]$,
$$\alpha_j\leq 2^{\frac{1}{1-m}}B^{1-\mu}(\alpha_1+A)^{\mu},\quad 2\leq j\leq N_0.$$
This completes the proof of the lemma.
\end{proof}

\begin{proof}[Proof of Proposition \ref{prou}]
By Lemma \ref{le44} and  Young's inequality, we easily obtain
$$\|u\|_{H_h^1(B_h^{j})}\leq C\left(\varepsilon^p\|u\|_{H_h^1(\Omega_h)}+\varepsilon^{-p'}	\left(e^{-\frac{C_4\varepsilon_0}{m h}}\|u\|_{H_h^1(\Omega_h)}+\|u\|_{H_h^1(B_h^{1})}\right)\right),$$
for all $\varepsilon>0, 2\leq j\leq N_0$. Here
$$p=\frac{1}{1-m^j},\quad p'=\frac{1}{m^j}.$$
Selecting $\varepsilon=e^{-\frac{C_5}{p}\tau}$, $C_5>0,\tau>0$, we have
\begin{align*}
\|u\|_{H_h^1(B_h^{j})}\leq C\left(e^{-C_5\tau}\|u\|_{H_h^1(\Omega_h)}+e^{C_6(j)\tau-\frac{C_4\varepsilon_0}{m h}}\|u\|_{H_h^1(\Omega_h)}+e^{C_6(j)\tau}	\|u\|_{H_h^1(B_h^{1})}\right),
\end{align*}	
where $C_6(j):=\frac{C_5p'}{p}$. Then, for $h\tau\leq\frac{C_4\varepsilon_0}{(C_5+C_6(j))m}$ such that $C_6(j)\tau-\frac{C_4\varepsilon_0}{m h}\leq -C_5\tau$, we get
\begin{align}\label{e418}
	\|u\|_{H_h^1(B_h^{j})}\leq C\left(e^{-C_5\tau}\|u\|_{H_h^1(\Omega_h)}+e^{C_6(j)\tau}	\|u\|_{H_h^1(B_h^{1})}\right).
\end{align}

Similarly, we obtain from Lemma	\ref{le42} and Young's inequality
\begin{align*}
	\|u\|_{H_h^1(B_h^1)}\leq C\left(\varepsilon^{p_0}\|u\|_{H_h^1(\Omega_h)}+\varepsilon^{-p_0'} \left(e^{-\frac{C_2\varepsilon_0}{m_0h}}\|u\|_{H_h^1(\Omega_h)}+
		|\partial_nu|_{L_h^2(\Gamma_h)}\right)\right),
\end{align*}
for all $\varepsilon>0$. Here
$$p_0=\frac{1}{1-m_0},\quad p_0'=\frac{1}{m_0}.$$	
Selecting $\varepsilon=e^{-\frac{C_5+C_6(j)}{p_0}\tau}$ and setting $C_7(j)=\frac{(C_5+C_6(j))p_0'}{p_0}$, 	we obtain
\begin{align*}
\|u\|_{H_h^1(B_h^1)}\leq C\left(e^{-(C_5+C_6(j))\tau}\|u\|_{H_h^1(\Omega_h)}+e^{C_7(j)\tau-\frac{C_2\varepsilon_0}{m_0h}}\|u\|_{H_h^1(\Omega_h)}+e^{C_7(j)\tau} |\partial_nu|_{L_h^2(\Gamma_h)}\right).	
\end{align*}
Choosing $h\tau\leq\frac{C_2\varepsilon_0}{(C_5+C_6(j)+C_7(j))m_0}$ such that $C_7(j)\tau-\frac{C_2\varepsilon_0}{m_0h}\leq -(C_5+C_6(j))\tau$, we have
\begin{align}\label{e419}
	\|u\|_{H_h^1(B_h^1)}\leq C\left(e^{-(C_5+C_6(j))\tau}\|u\|_{H_h^1(\Omega_h)}+e^{C_7(j)\tau} |\partial_nu|_{L_h^2(\Gamma_h)}\right).
\end{align}	
By (\ref{e418}) and (\ref{e419}), we have
\begin{align}\label{e420}
	\|u\|_{H_h^1(B_h^j)}\leq Ce^{-C_5\tau}\|u\|_{H_h^1(\Omega_h)}+e^{C(j)\tau} |\partial_nu|_{L_h^2(\Gamma_h)},
\end{align}	
for some positive constant $C(j)$ and $\tau>0$ satisfying $h\tau\leq\min\{\frac{C_4\varepsilon_0}{(C_5+C_6(j))m},\frac{C_2\varepsilon_0}{(C_5+C_6(j)+C_7(j))m_0}\}:=\varepsilon_j$. Setting $\widetilde\varepsilon:=\min_{j\in\{2,\dots,N_0\}}\varepsilon_j$,
addition of inequalities (\ref{e420}) for $j\in\{2,\dots,N_0\}$ yields the final inequality (\ref{ee}). 	
Thus, the proof of Proposition \ref{prou} is complete.

\end{proof}

\section{Stability estimate for the discrete Carder\'on problem}
 In this section, we focus on the proof of Theorem \ref{theo}.

Similar to the continuous case, the proof of the stability estimate is based on the existence of CGO solutions. The following results state their existence.

\begin{lemma}[{\cite[Theorem 4.4]{ervedoza2011uniform}}]\label{CGO}
Let $M>0$. For all $q\in C(\Omega_h)$ satisfying $\|q\|_{L_h^\infty(\Omega_h)}\leq M$ and  Assumption \ref{assq}, there exist constants $a_0>0$ and $c>0$ that depend on $M$ such that  $\forall\eta\in\mathbb C^d$ with $\eta\cdot\eta=0$, if $a:=|{\rm Im}\eta|$ verifies $a_0\leq a\leq c\min\{\epsilon_d^{-1},h^{-\frac{2}{3}}\}$, there exists  $u\in\mathcal C(\overline\Omega_h)$ a solution of
$$-\Delta_hu+qu=0, \quad {\rm on}~\Omega_h,$$
that satisfies
\begin{align}\label{e51}
u(x)=e^{{\rm i}\eta\cdot x}(1+r(x)),\quad {\rm on}~\overline\Omega_h,
\end{align}
with
\begin{align}\label{reme}
\|r\|_{\mathring H_h^1(\Omega_h)}+a\|r\|_{L_h^2(\overline\Omega_h)}\leq C(1+a^2\epsilon_a+a^4h^2).
\end{align}
Moreover, the solution $u$ satisfies
\begin{align}\label{e52}
	\|u\|_{H^1_h(\Omega_h)}\leq Ce^aa^2,
\end{align}
where $C$ only depends on $m, a_0$ and $c$.
\end{lemma}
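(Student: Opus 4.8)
The plan is to follow the Sylvester--Uhlmann construction of complex geometric optics solutions, carried out in the discrete framework. I substitute the ansatz $u(x)=e^{{\rm i}\eta\cdot x}(1+r(x))$ into $-\Delta_h u+qu=0$ and conjugate by the exponential weight. Writing $L_\eta w:=e^{-{\rm i}\eta\cdot x}\Delta_h\!\left(e^{{\rm i}\eta\cdot x}w\right)$, the equation becomes the perturbed constant--coefficient problem for the remainder
\begin{align*}
-L_\eta r+q\,r=L_\eta\mathbf{1}-q\quad{\rm in}~\Omega_h.
\end{align*}
Thus the whole statement reduces to solving this equation for $r$ with a small source and then reading off the norm bounds.

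First I would compute $L_\eta$ explicitly by means of the product rules in Lemma \ref{le21}, splitting it as $L_\eta=L_\eta^0+R_\eta$, where $L_\eta^0$ is the part with frozen coefficients $\sigma^k\equiv1$ and $R_\eta$ gathers the contributions of $\sigma^k-1$ and of $D_j\sigma^k$. Conjugating the constant--coefficient difference operator acts as a complex frequency shift, and a direct evaluation of $D_kD_k(e^{{\rm i}\eta\cdot x}w)$ shows that the symbol of $L_\eta^0$ is $-\sum_k\frac{4}{h^2}\sin^2\!\left(\frac{\eta_kh}{2}\right)$. Because $\eta\cdot\eta=0$, expanding the sine gives $L_\eta^0\mathbf{1}=-\eta\cdot\eta+O(h^2|\eta|^4)=O(a^4h^2)$, which is the origin of the $a^4h^2$ term in (\ref{reme}); the coefficient perturbation $R_\eta\mathbf{1}$ produces terms of size $a^2\epsilon_a$ and $a\epsilon_d$.

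The analytic heart of the argument is a discrete solvability estimate for $L_\eta^0$: using the discrete Fourier transform on a periodic extension of the grid, one checks that the symbol of $L_\eta^0$ does not vanish on $\hat{\mathcal K}_h$ once ${\rm Im}\,\eta$ is nonzero, and one obtains a right inverse $G_\eta$ with the weighted bound $\|G_\eta f\|_{\mathring H^1_h}+a\|G_\eta f\|_{L^2_h}\le C\|f\|_{L^2_h}$, the gain $a^{-1}$ being the discrete analogue of the H\"ahner estimate. It is exactly in controlling the discrete symbol error that the restriction $a\le ch^{-2/3}$ appears: it guarantees that the $O(a^{3}h^2)$-type defects stay bounded and do not destroy the non-vanishing. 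With $G_\eta$ in hand I recast the equation as the fixed--point problem
\begin{align*}
r=G_\eta\big((R_\eta-q)r+(L_\eta^0\mathbf{1}+R_\eta\mathbf{1}-q)\big),
\end{align*}
and show it is a contraction on the space equipped with the weighted norm $\|\cdot\|_{\mathring H^1_h}+a\|\cdot\|_{L^2_h}$. The contraction uses $\|q\|_{L^\infty_h}\le M$ together with the two standing restrictions: $a\le c\epsilon_d^{-1}$ makes the drift--type part of $R_\eta$ small (and lets $a\epsilon_d$ be absorbed into the constant $1$ of (\ref{reme})), while $a\le ch^{-2/3}$ controls the remaining discrete defects. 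Banach's fixed--point theorem then yields a unique $r$, and tracking the source through the weighted bound gives precisely (\ref{reme}).

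Finally, (\ref{e52}) follows by estimating $u=e^{{\rm i}\eta\cdot x}(1+r)$ directly: on $\overline\Omega_h\subset[0,1]^d$ one has $|e^{{\rm i}\eta\cdot x}|\le e^{|{\rm Im}\,\eta|}=e^{a}$, each discrete derivative $D_k$ falling on the exponential brings down a factor $O(a)$, and combining this with (\ref{reme}) and $a\le c\min\{\epsilon_d^{-1},h^{-2/3}\}$ (so that $a^2\epsilon_a,a^4h^2=O(a^2)$, whence $\|r\|_{\mathring H^1_h}=O(a^2)$ and $\|r\|_{L^2_h}=O(a)$) produces the bound $\|u\|_{H^1_h(\Omega_h)}\le Ce^aa^2$. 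I expect the main obstacle to be the discrete solvability estimate for $L_\eta^0$ with the sharp $a^{-1}$ gain and the accompanying symbol analysis that fixes the admissible range $a_0\le a\le ch^{-2/3}$; once that Fourier--analytic estimate is secured, the fixed point and the bookkeeping of the source terms are routine.
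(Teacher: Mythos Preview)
The paper does not prove this lemma: it is quoted verbatim as \cite[Theorem~4.4]{ervedoza2011uniform}, and the only accompanying text is the Remark that the form $u=e^{\eta\cdot x}+e^{\mathcal R(\eta)\cdot x}r(x)$ used in \cite{ervedoza2011uniform} can be rewritten as (\ref{e51}). There is therefore nothing in the present paper to compare your argument against beyond that citation.

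That said, your outline is precisely the Sylvester--Uhlmann scheme carried out in the discrete setting as in \cite{ervedoza2011uniform}: conjugate by $e^{{\rm i}\eta\cdot x}$, split the resulting operator into a frozen-coefficient part with explicit Fourier symbol and a remainder governed by $\epsilon_a,\epsilon_d$, prove a weighted right-inverse estimate for the principal part, and close by a contraction. The bookkeeping you give for the source terms ($a^4h^2$ from the sine expansion using $\eta\cdot\eta=0$, $a^2\epsilon_a$ and $a\epsilon_d$ from the coefficients) matches the provenance of (\ref{reme}), and your derivation of (\ref{e52}) from (\ref{reme}) via $a^2h^2\le c^2h^{2/3}\le c^2$ is correct. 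The only place where your sketch is genuinely incomplete is the ``analytic heart'': the Fourier-side proof that the symbol $-\sum_k\frac{4}{h^2}\sin^2(\eta_kh/2)$, shifted to the dual lattice, admits a right inverse with the gain $a^{-1}$ uniformly in $h$ requires real work (a H\"ahner-type shifted-lattice argument together with a careful symbol perturbation that pins down the threshold $a_0$ and the ceiling $a\le ch^{-2/3}$); you have correctly identified this as the main obstacle, but a complete proof would have to carry it out.
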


\begin{remark}
As it mentioned in (4.21) in \cite{ervedoza2011uniform}, the solution $u=e^{\eta\cdot x}+e^{\mathcal R(\eta)\cdot x}r(x)$ in \cite{ervedoza2011uniform} can be rewritten as (\ref{e51}) in our paper.
\end{remark}

\begin{proof}[Proof of Theorem \ref{theo}]
Let $q_1\geq0$ and $q_2\geq0$ in $\mathcal C(\Omega_h)$ be two potentials satisfying	Assumption \ref{assq} and $q_1=q_2$ in $\mathcal O_h=\mathcal O\cap\Omega_h$, where $\mathcal O\subset\Omega$ is a neighborhood of $\partial\Omega$ and  $\|q\|_{L_h^\infty(\Omega_h)}\leq M$. For all $	\xi\in \hat{\mathcal  K}_h=\mathbb Z^d\cap[0,N]^d$ satisfying $\pi|\xi|\leq a$,
we set $$\eta_1:=-\pi\xi+\sqrt{a^2-\pi^2|\xi|^2}\zeta_1+{\rm i}a\zeta_2,\quad \eta_2:=-\pi\xi-\sqrt{a^2-\pi^2|\xi|^2}\zeta_1-{\rm i}a\zeta_2,$$
where $a_0\leq a\leq c\min\{\epsilon_d^{-1},\epsilon_a^{-1},h^{-\frac{2}{3}}\}$ and $ \zeta_1,\zeta_2\in\mathbb R^d$ satisfying $\xi\cdot\zeta_1=\xi\cdot\zeta_2=\zeta_1\cdot\zeta_2=0 , | \zeta_1|=|\zeta_2|=1.$ It follows from Lemma \ref{CGO} that there exist $r_1$ and $r_2$, such that $u_1:=e^{{\rm i}\eta_1\cdot x}(1+r_1(x))$ and $u_2:=e^{{\rm i}\eta_2\cdot x}(1+r_2(x))$ are solutions of
\begin{align*}
	&-\Delta_hu_1+q_1u_1=0\quad {\rm in }~\Omega_h,\\
	&-\Delta_hu_2+q_2u_2=0\quad {\rm in }~\Omega_h,
\end{align*}
respectively. 	
Let $v\in \mathcal C(\overline\Omega_h)$ be the solution to the following problem
\begin{align*}
	(-\Delta_h+q_1)v&=0 ~\quad{\rm in} ~\Omega_h,\\
	v&=u_2\quad{\rm on}~~\partial\Omega_h.
\end{align*}
Then, by defining $u=v-u_2\in \mathcal C(\overline\Omega_h)$, we get
\begin{align}\label{a3}
	\begin{split}
		(-\Delta_h+q_1)u&=(q_2-q_1) u_2 \quad\mbox{in} ~~\Omega_h,\\
	u&=0\quad\mbox{on}~~\partial\Omega_h.
	\end{split}
\end{align}	
To proceed, we introduce a cut-off function $\chi_1\in C_0^\infty(\Omega)$ satisfying $0\leq\chi_1\leq1$ and
\begin{align}\label{m6}
	\chi_1(x)=
	\begin{cases}
		0,\quad x\in\omega(\frac{3}{2}\varrho),\\
		1,\quad x\in \overline\Omega\backslash\omega(\frac{5}{2}\varrho),
	\end{cases}
\end{align}
where $\omega(\frac{3}{2}\varrho)$ and $\omega(\frac{5}{2}\varrho)$ are defined in Section 4.
If we put $\widetilde u=\chi_1 u$, it follows from $A_k^2\chi_1=\chi_1+\frac{h^2}{4}D_k^2\chi_1$  and (\ref{a3}) that
\begin{align}\label{e55}
	(-\Delta_h+q_1)\widetilde u=&-\sum_{k=1}^d\left[D_k(\sigma^kD_k\chi_1)A_k^2u+(D_kA_ku)A_k(\sigma^kD_k\chi_1)+(D_kA_k\chi_1) A_k(\sigma^kD_ku)\right]\nonumber\\
	&-\sum_{k=1}^d(\chi_1+\frac{h^2}{4}D_k^2\chi_1)D_k(\sigma^kD_ku)
	+q_1\chi_1 u\nonumber\\
	\begin{split}
	=&-\sum_{k=1}^d\left[D_k(\sigma^kD_k\chi_1)A_k^2u+(D_kA_ku)A_k(\sigma^kD_k\chi_1)+(D_kA_k\chi_1) A_k(\sigma^kD_ku)\right]\\
	&-\frac{h^2}{4}\sum_{k=1}^d (D_k^2\chi_1) D_k(\sigma^kD_ku)+(q_2-q_1)u_2,
	\end{split}
\end{align}
for all $x\in\Omega_h$,
where we have used $\chi_1(q_2-q_1)=q_2-q_1$ in $\Omega_h$ by noting $q_1=q_2$ in ${\mathcal O_h}$ and (\ref{m6}).	
Multiplying  (\ref{e55}) by $u_1$ and  integrating  over $\Omega_h$, we obtain	
\begin{align}\label{e56}
	\begin{split}
		\int_{\Omega_h}(q_2-q_1)u_1u_2=\int_{\Omega_h}u_1\sum_{k=1}^d&\left[D_k(\sigma^kD_k\chi_1)A_k^2u+(D_kA_ku)A_k(\sigma^kD_k\chi_1)\right.\\
		&\left.+(D_kA_k\chi_1) A_k(\sigma^kD_ku)+\frac{h^2}{4} (D_k^2\chi_1)D_k(\sigma^kD_ku)\right],
	\end{split}
\end{align}	
where we have used 	(\ref{e23}) by noting $\widetilde u=\partial_n\widetilde u=0$ on $\partial\Omega_h$. In order to simplify the notations, we denote the right-hand side of (\ref{e56})
as $F$. Noting that $u$ satisfies the conditions in Proposition \ref{prou}, we apply (\ref{ee}) and (\ref{m6}) to get
\begin{align*}
	|F|^2\leq &C\|u_1\|_{L^2_h(\Omega_h)}^2\sum_{k=1}^d\int_{\omega_h(\varrho,3\varrho)}\left(|A_k^2u|^2+|D_kA_ku|^2+|A_k(\sigma^kD_ku)|^2+\frac{h^2}{4}|D_k(\sigma^kD_ku)|^2\right),
\end{align*}
for $0<h<h_0<1$.
Similar to the discussions in Section 4, we have
\begin{align*}
\|A_k^2u\|_{L^2_h(\omega_h(\varrho,3\varrho))}^2&\leq \|u\|_{L^2_h(\omega_h(\varrho,3\varrho))}^2,\\
 \|D_kA_ku\|_{L^2_h(\omega_h(\varrho,3\varrho))}^2&\leq\|u\|_{\mathring H_h^1(\omega_h(\varrho,3\varrho))}^2,\\
	\|A_k(\sigma^kD_ku)\| _{L^2_h(\omega_h(\varrho,3\varrho))}^2&\leq C\|u\|_{\mathring H_h^1(\omega_h(\varrho,3\varrho))}^2,\\
	\frac{h^2}{4}\|D_k(\sigma^kD_ku)\| _{L^2_h(\omega_h(\varrho,3\varrho))}^2&\leq C\|u\|_{\mathring H_h^1(\omega_h(\varrho,3\varrho))}^2.
\end{align*}
In the last inequality, we used $\frac{h^2}{4}|D_k(\sigma^kD_ku)|^2\leq A_k(|\sigma^kD_ku|^2)$.
Thus, we get
\begin{align}\label{e57}
	|F|\leq C\|u_1\|_{L^2_h(\Omega_h)}\|u\|_{H_h^1(\omega_h(\varrho,3\varrho))}
\end{align}
Combining (\ref{e57}) with (\ref{e56}) and (\ref{ee}), we obtain
\begin{align}\label{e58}
	\left|\int_{\Omega_h}(q_2-q_1)u_1u_2\right|\leq C\|u_1\|_{L^2_h(\Omega_h)}\left(e^{-\alpha_1\tau}\|u\|_{H^1_h(\Omega_h)}+e^{\alpha_2\tau}|\partial_n u|_{L^2_h(\Gamma_h)}\right),
\end{align}
where $\alpha_1,\alpha_2>0$ and $0<\tau\leq\frac{\widetilde\varepsilon}{h}$.

Applying Proposition \ref{regu} and noting that $u$ satisfies (\ref{a3}), we have
\begin{align*}
	|\partial_n u|_{L^2_h(\Gamma_h)}&\leq |\partial_n u|^{\frac{1}{2}}_{H^{-\frac{1}{2}}_h(\Gamma_h)}|\partial_n u|^{\frac{1}{2}}_{H^{\frac{1}{2}}_h(\Gamma_h)} =|(\Lambda_h[q_1]-\Lambda_h[q_2])(u_2)|^{\frac{1}{2}}_{H^{-\frac{1}{2}}_h(\Gamma_h)}|\partial_n u|^{\frac{1}{2}}_{H^{\frac{1}{2}}_h(\Gamma_h)}\\
	  &\leq \delta^\frac{1}{2}|u_2|^\frac{1}{2}_{H_h^{\frac{1}{2}}(\partial\Omega_h)}|\partial_n u|^{\frac{1}{2}}_{H^{\frac{1}{2}}_h(\Gamma_h)}\leq \delta^\frac{1}{2}\|u_2\|^\frac{1}{2}_{H_h^{1}(\Omega_h)}\|u\|^\frac{1}{2}_{H_h^{2}(\Omega_h)}\\
	&\leq C\delta^\frac{1}{2} \|u_2\|^\frac{1}{2}_{H_h^{1}(\Omega_h)}\|u_2\|^\frac{1}{2}_{L^{2}_h(\Omega_h)},
\end{align*}
where we abbreviate $\delta:=\|\Lambda_h[q_1]-\Lambda_h[q_2]\|_{\mathcal L_h(\Gamma_h)}$.
It follows from (\ref{e52}) that
\begin{align*}
	|\partial_n u|_{L^2_h(\Gamma_h)}\leq  C\delta^\frac{1}{2} e^a a^2,
\end{align*}
 and
 \begin{align*}
\|u_1\|_{L^2_h(\Omega_h)}\leq Ce^a a^2.
\end{align*}
Similarly,  we   apply Assumption \ref{as2} and (\ref{e52}) to get
\begin{align*}
\|u\|_{H^1_h(\Omega_h)}\leq C\|(q_2-q_1)u_2\|_{L^2_h(\Omega_h)}\leq Ce^a a^2.	
\end{align*}

Thus, we have	
	\begin{align}\label{e59}
	\left|\int_{\Omega_h}(q_2-q_1)u_1u_2\right|\leq C	e^{2a}a^4\left(e^{-\alpha_1\tau}
	+\delta^{\frac{1}{2}}e^{\alpha_2\tau}\right),
	\end{align}
for all $0<\tau\leq\frac{\widetilde\varepsilon}{h}$ and $\alpha_1>0,\alpha_2>0,a_0\leq a\leq c\min\{\epsilon_d^{-1},\epsilon_a^{-1},h^{-\frac{2}{3}}\}$.	
Then we choose a constant $\widetilde{\gamma} >0$ sufficiently large and set $\tau=a\widetilde\gamma$ so that
\begin{align}\label{buch11}
	e^{2a}a^4e^{-\alpha_1\tau}\leq \frac{C}{a},\quad {\rm and}\quad
	e^{2a}e^{\alpha_2\tau}\leq e^{\alpha_3 a},
\end{align}
for some constants $C>0$ and $\alpha_3>0$.  It is easy to see that	$0<\tau\leq\frac{\widetilde\varepsilon}{h}$ implies that $a\leq \frac{\widetilde\varepsilon}{\widetilde\gamma}h^{-1}$.
Furthermore,  by (\ref{e59}), (\ref{buch11}) and noting $u_1=e^{{\rm i}\eta_1\cdot x}(1+r_1(x))$, $u_2=e^{{\rm i}\eta_2\cdot x}(1+r_2(x))$, we obtain
\begin{align}\label{e511}
	\left|\mathcal F_h(q_2-q_1)(\xi)\right|&\leq \frac{C}{a}+Ca^4e^{\alpha_3a}\delta^{\frac{1}{2}}+\left|\int_{\Omega_h}(q_2-q_1)e^{-2\pi{\rm i}\xi\cdot x}
(r_1+r_2+r_1r_2)\right| \nonumber\\
&\leq C\left(\frac{1}{a}+a^4e^{\alpha_3a}\delta^{\frac{1}{2}}+\|r_1\|_{L^2_h(\Omega_h)}+\|r_2\|_{L^2_h(\Omega_h)}+\|r_1\|_{L^2_h(\Omega_h)}\|r_2\|_{L^2_h(\Omega_h)}\right),
\end{align}	
In addition, it follows from 	(\ref{reme}) that $\|r_1\|_{L^2_h(\Omega_h)}$, $\|r_2\|_{L^2_h(\Omega_h)}$ are bounded and
$$\|r_1\|_{L^2_h(\Omega_h)}+\|r_2\|_{L^2_h(\Omega_h)}+\|r_1\|_{L^2_h(\Omega_h)}\|r_2\|_{L^2_h(\Omega_h)}\leq C\left(\frac{1}{a}+a\varepsilon_a+a^3h^2\right).$$
We conclude from (\ref{e511}) that
\begin{align}\label{e512}
	\left|\mathcal F_h(q_2-q_1)(\xi)\right|\leq C\left(a^4e^{\alpha_3a}\delta^{\frac{1}{2}}+\frac{1}{a}+a\varepsilon_a+a^3h^2\right),
\end{align}
for $\alpha_3>0$ , $a_0\leq a\leq\widetilde c\min\{\epsilon_d^{-1},\epsilon_a^{-1},h^{-\frac{2}{3}}\}$, $\widetilde c:=\min\{\frac{\widetilde\varepsilon}{\widetilde\gamma},c\}$ and $	\xi\in \hat{\mathcal  K}_h$ satisfying $\pi|\xi|\leq a$.

Setting $\widetilde\mu=\widetilde c^{-1}\max\{\varepsilon_a^{\frac{1}{2}},h^{\frac{1}{2}},\varepsilon_d\}$, we always have $\frac{1}{\widetilde\mu}\leq\widetilde c\min\{\epsilon_d^{-1},\epsilon_a^{-1},h^{-\frac{2}{3}}\}$ and a simple study shows that, setting $a=\frac{1}{\widetilde\mu}$,
$$
\frac{1}{a}+a\varepsilon_a+a^3h^2\leq C\widetilde\mu.$$

Therefore, 	if $\delta=0$, we set $a=\frac{1}{\widetilde\mu}$, then for all $	\xi\in \hat{\mathcal  K}_h$ satisfying $\pi|\xi|\leq \frac{1}{\widetilde\mu}$, we have
\begin{align*}
	\left|\mathcal F_h(q_2-q_1)(\xi)\right|\leq C\widetilde\mu.
\end{align*}

If $\delta$ is small enough, for example, for $-\ln\delta\geq\frac{2(\alpha_3+3)}{\widetilde\mu}	$, such that
$$\delta^{\frac{1}{2}}\frac{1}{\widetilde\mu^4}e^{\frac{\alpha_3}{\widetilde\mu}}\leq\widetilde\mu,$$
taking  	$a=\frac{1}{\widetilde\mu}$ in (\ref{e512}), we obtain,
for all $	\xi\in \hat{\mathcal  K}_h$ satisfying $\pi|\xi|\leq \frac{1}{\widetilde\mu}$,
\begin{align*}
\left|\mathcal F_h(q_2-q_1)(\xi)\right|\leq C\widetilde\mu.	
\end{align*}	

If $-\ln\delta\in (2(\alpha_3+3)a_0,\frac{2(\alpha_3+3)}{\widetilde\mu})	$, taking $a=\frac{-\ln\delta}{2(\alpha_3+3)}$ in (\ref{e512}), we obtain,
for all $	\xi\in \hat{\mathcal  K}_h$ satisfying $\pi|\xi|\leq \frac{-\ln\delta}{2(\alpha_3+3)}$,
\begin{align*}
\left|\mathcal F_h(q_2-q_1)(\xi)\right|&\leq C\left(\delta^{\frac{3}{2(\alpha_3+3)}	}|\ln\delta|^4+\frac{1}{|\ln\delta|}+|\ln\delta|\varepsilon_a+|\ln\delta|^3h^2\right)\\
&\leq \frac{C}{|\ln\delta|},
\end{align*}		
where we have used  $\widetilde\mu=\widetilde c^{-1}\max\{\varepsilon_a^{1/2},h^{1/2},\varepsilon_d\}\leq \frac{2(\alpha_3+3)}{|\ln\delta|}$.

Combining these cases, we obtain that, if $\delta\leq e^{-2(\alpha_3+3)a_0}$, setting	
$$\mu=\max\{\widetilde\mu,\frac{2(\alpha_3+3)}{|\ln\delta|}\},$$
for all $	\xi\in \hat{\mathcal  K}_h$ satisfying $\pi|\xi|\leq \frac{1}{\mu}$,
\begin{align}\label{e513}
	\left|\mathcal F_h(q_2-q_1)(\xi)\right|\leq C\mu.	
\end{align}	

In order to estimate $|q_1-q_2|_{H_h^{-r}(\Omega_h)}$, we
take $\rho\in(0,\frac{1}{\pi\mu})$ to be chosen and use (\ref{e513}) to get 	
\begin{align*}
	|q_1-q_2|^2_{H_h^{-r}(\Omega_h)}=&\sum_{\xi\in\hat{\mathcal  K}_h}\left|\mathcal F_h(q_2-q_1)(\xi)\right|^2(1+|\xi|^2)^{-r}\\
	=&\sum_{|\xi|<\rho,\xi\in\hat{\mathcal  K}_h}\left|\mathcal F_h(q_2-q_1)(\xi)\right|^2(1+|\xi|^2)^{-r}\\
	&+\sum_{|\xi|>\rho,\xi\in\hat{\mathcal  K}_h}\left|\mathcal F_h(q_2-q_1)(\xi)\right|^2(1+|\xi|^2)^{-r}\\
	\leq&C(\rho^d\mu^2+\rho^{-2r}).
\end{align*}
Setting $\rho=\pi^{-1}\mu^{\frac{-2}{d+2r}}$, which	 is indeed smaller than $\frac{1}{\pi\mu}$, we have
\begin{align*}
	|q_1-q_2|^2_{H_h^{-r}(\Omega_h)}&\leq C\mu^{\frac{4r}{d+2r}}\\
       &\leq C\max\left\{\epsilon_d^{\frac{4r}{d+2r}}, \epsilon_a^{\frac{2r}{d+2r}}, h^{\frac{2r}{d+2r}}, \left|\ln\left(\|\Lambda_h[q_1]-\Lambda_h[q_2]\|_{\mathcal L_h(\Gamma_h)}\right)\right|^{-\frac{4r}{d+2r}}\right\},
\end{align*}
which completes the proof of Theorem \ref{theo}.

\end{proof}

\begin{appendix}
\section{Proofs of intermediate results}	
\subsection{Proof of Lemma \ref{lerhs}}
By Assumption \ref{simg} and the second equality in Lemma  \ref{le34}, we have
\begin{align}\label{A1}
\|rD_kA_k\rho D_k\sigma^k \tau_kD_kv\|_{L^2_h(\Omega_h)}^2	&\leq\int_{\Omega_h}|rD_kA_k\rho|^2|\tau_kD_kv|^2\leq  C_{\lambda,\mathcal R}s^2\int_{\Omega_h}|\tau_kD_kv|^2 \nonumber \\
&=C_{\lambda,\mathcal R}s^2\int_{\tau_k(\Omega_h)}|D_kv|^2\leq C_{\lambda,\mathcal R}s^2\int_{\Omega_{h,k}^*}|D_kv|^2.
\end{align}
Similarly, we have
\begin{align}
	\|rD_kA_k\rho D_k\sigma^k \tau_{-k}D_kv\|_{L^2_h(\Omega_h)}^2\leq C_{\lambda,\mathcal R}s^2\int_{\Omega_{h,k}^*}|D_kv|^2.
\end{align}
By Assumption \ref{simg}, Corollary \ref{co22} and (\ref{ak}), we  observe that
\begin{align*}
		\|rD_k^2\rho D_k\sigma^k D_kA_kv\|^2_{L^2_h(\Omega_h)}&\leq \int_{\Omega_h}|rD_k^2\rho|^2A_k(|D_kv|^2)\\
		&=\int_{\Omega_{h,k}^*}A_k(|rD_k^2\rho|^2)|D_kv|^2-\frac{h}{2}\int_{\partial_k\Omega_h}|rD_k^2\rho|^2t_r^k(|D_kv|^2)\\
		&\leq\int_{\Omega_{h,k}^*}A_k(|rD_k^2\rho|^2)|D_kv|^2,
\end{align*}
which, by the third equality in Lemma \ref{le34}, yields
\begin{align}
	\|rD_k^2\rho D_k\sigma^k D_kA_kv\|^2_{L^2_h(\Omega_h)}\leq C_{\lambda,\mathcal R}s^4\int_{\Omega_{h,k}^*}|D_kv|^2.
\end{align}
We also find
\begin{align}
	\|rA_kD_k\rho A_kD_kv\|^2_{L^2_h(\Omega_h)}\leq C_{\lambda,\mathcal R}s^2\int_{\Omega_{h,k}^*}|D_kv|^2.
\end{align}
We note that
\begin{align*}
	\|A_k^2v\|^2_{L^2_h(\Omega_h)}\leq \int_{\Omega_h} A_k(|A_kv|^2)=\int_{\Omega_{h,k}^*}|A_kv|^2-\frac{h}{2}\int_{\partial_k\Omega_h}t_r^k(|A_kv|^2)\leq \int_{\Omega_{h,k}^*}|A_kv|^2\leq\int_{\Omega_h}|v|^2,
\end{align*}
by Corollary \ref{co22}, (\ref{ak}) and since $v|_{\partial\Omega_h}=0$. Then by the second and third equality in  Lemma \ref{le34}, we have
\begin{align}\label{A5}
\|(rD_kA_k\rho D_k\sigma^k+hO(1)rD_k^2\rho)A_k^2v\|^2_{L^2_h(\Omega_h)}\leq 	C_{\lambda,\mathcal R}s^2(1+(sh)^2)\|v\|^2_{L^2_h(\Omega_h)}.
\end{align}
Similarly, since $\Delta_\sigma\varphi$ is bounded, estimates (\ref{A1})-(\ref{A5}) yield the result.

\subsection{Proof of Lemma \ref{le37}}

From the definitions of $A_1v$ and $B_1v$  in (\ref{abg}) we have
\begin{align*}
	(A_1v,B_1v)_{\Omega_h}&=\sum_{j,k=1}^d\int_{\Omega_h}2r^2A_k^2\rho D_k(\sigma^kD_kv)\sigma^jA_jD_j\rho A_jD_jv\nonumber\\
	&:=\sum_{j,k=1}^d\int_{\Omega_h}2\gamma_1^kD_k(\sigma^kD_kv)\beta_1^j\sigma^jA_jD_jv\nonumber\\
	&:=\sum_{j,k=1}^dI_{jk},
\end{align*}
where $\gamma_1^k=rA_k^2\rho$ and $\beta_1^j=rA_jD_j\rho$.

\subsubsection{Computation of $I_{kk}$}
By Lemma \ref{le21} and (\ref{dk}), we have
\begin{align}\label{A6}
	I_{kk}=&\int_{\Omega_h}2\gamma_1^k\beta_1^k\sigma^kA_k\sigma^kD_k^2vD_kA_kv+\int_{\Omega_h}2\gamma_1^k\beta_1^k\sigma^kD_k\sigma^k|A_kD_kv|^2\nonumber\\
	=&\int_{\Omega_h}\gamma_1^k\beta_1^k\sigma^kA_k\sigma^kD_k(|D_kv|^2)+\int_{\Omega_h}2\gamma_1^k\beta_1^k\sigma^kD_k\sigma^k|A_kD_kv|^2\nonumber\\
	\begin{split}
	=&-\int_{\Omega_{h,k}^*}D_k(\gamma_1^k\beta_1^k\sigma^kA_k\sigma^k)|D_kv|^2+\int_{\partial_k\Omega_h}\gamma_1^k\beta_1^k\sigma^kA_k\sigma^kt_r^k(|D_kv|^2)n_k\\
	&+\int_{\Omega_h}2\gamma_1^k\beta_1^k\sigma^kD_k\sigma^k|A_kD_kv|^2.
	\end{split}
\end{align}
The discrete integration by parts with respect to the average operator $A_k$ (\ref{ak}) yields
\begin{align}
\begin{split}
	&-\int_{\Omega_{h,k}^*}D_k(\gamma_1^k\beta_1^k\sigma^kA_k\sigma^k)|D_kv|^2\\=&\int_{\Omega_{h,k}^*}D_k(\gamma_1^k\beta_1^k\sigma^kA_k\sigma^k)|D_kv|^2-2\int_{\Omega_h}A_k(D_k(\gamma_1^k\beta_1^k\sigma^kA_k\sigma^k)|D_kv|^2)\\
	&-\frac{h}{2}\int_{\partial_k\Omega_h}t_r^k(D_k(\gamma_1^k\beta_1^k\sigma^kA_k\sigma^k)|D_kv|^2).
	\end{split}
\end{align}
 A further use of Lemma \ref{le21} and the discrete integration by parts with respect to the difference operator $D_k$ (\ref{dk})  yield
\begin{align}\label{A8}
	&-2\int_{\Omega_h}A_k(D_k(\gamma_1^k\beta_1^k\sigma^kA_k\sigma^k)|D_kv|^2)\nonumber\\
	=&-2\int_{\Omega_h}A_kD_k(\gamma_1^k\beta_1^k\sigma^kA_k\sigma^k)A_k(|D_kv|^2)-\frac{h^2}{2}\int_{\Omega_h}D_k^2(\gamma_1^k\beta_1^k\sigma^kA_k\sigma^k)D_k(|D_kv|^2)\nonumber\\
	\begin{split}
	=&-2\int_{\Omega_h}A_kD_k(\gamma_1^k\beta_1^k\sigma^kA_k\sigma^k)|A_kD_kv|^2-\frac{h^2}{2}\int_{\Omega_h}A_kD_k(\gamma_1^k\beta_1^k\sigma^kA_k\sigma^k)|D_k^2v|^2\\
	&+\frac{h^2}{2}\int_{\Omega_{h,k}^*}D_k^3(\gamma_1^k\beta_1^k\sigma^kA_k\sigma^k)D_k|D_kv|^2-\frac{h^2}{2}\int_{\partial_k\Omega_h}D_k^2(\gamma_1^k\beta_1^k\sigma^kA_k\sigma^k)t_r^k(|D_kv|^2)n_k.
	\end{split}
\end{align}
Combining (\ref{A6})--(\ref{A8}), we have
\begin{align}\label{AIK}
\begin{split}
	I_{kk}=&\int_{\Omega_{h,k}^*}D_k(\gamma_1^k\beta_1^k\sigma^kA_k\sigma^k)|D_kv|^2-2\int_{\Omega_h}A_kD_k(\gamma_1^k\beta_1^k\sigma^kA_k\sigma^k)|A_kD_kv|^2\\ &+2\int_{\Omega_h}\gamma_1^k\beta_1^k\sigma^kD_k\sigma^k|A_kD_kv|^2-\frac{h^2}{2}\int_{\Omega_h}A_kD_k(\gamma_1^k\beta_1^k\sigma^kA_k\sigma^k)|D_k^2v|^2\\
	&+\frac{h^2}{2}\int_{\Omega_{h,k}^*}D_k^3(\gamma_1^k\beta_1^k\sigma^kA_k\sigma^k)D_k|D_kv|^2+Y_{1k},
	\end{split}
\end{align}
where
\begin{align*}
	Y_{1k}=& -\frac{h^2}{2}\int_{\partial_k\Omega_h}D_k^2(\gamma_1^k\beta_1^k\sigma^kA_k\sigma^k)t_r^k(|D_kv|^2)n_k-\frac{h}{2}\int_{\partial_k\Omega_h}t_r^k(D_k(\gamma_1^k\beta_1^k\sigma^kA_k\sigma^k)|D_kv|^2)\\
	&+\int_{\partial_k\Omega_h}\gamma_1^k\beta_1^k\sigma^kA_k\sigma^kt_r^k(|D_kv|^2)n_k.
\end{align*}
\begin{lemma}\label{leA1}
For $sh\leq \mathcal R$, 	we have
	\begin{align*}
		\gamma_1^k\beta_1^k\sigma^kD_k\sigma^k&=s\lambda\varphi O(1)+sO_{\lambda,\mathcal R}(sh),\\
		D_k(\gamma_1^k\beta_1^k\sigma^kA_k\sigma^k)&=-s\lambda^2\varphi(\sigma^k)^2(\partial_k\psi)^2+s\lambda\varphi O(1)+sO_{\lambda,\mathcal R}(sh),\\
		A_kD_k(\gamma_1^k\beta_1^k\sigma^kA_k\sigma^k)&=-s\lambda^2\varphi(\sigma^k)^2(\partial_k\psi)^2+s\lambda\varphi O(1)+sO_{\lambda,\mathcal R}(sh),\\
		h^2D_k^3(\gamma_1^k\beta_1^k\sigma^kA_k\sigma^k)&=s\lambda\varphi O(1)+sO_{\lambda,\mathcal R}(sh),
	\end{align*}
	where $\gamma_1^k=rA_k^2\rho$ and $\beta_1^k=rA_kD_k\rho$.
\end{lemma}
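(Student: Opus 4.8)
The plan is to reduce all four identities to the single block $\gamma_1^k\beta_1^k=(rA_k^2\rho)(rA_kD_k\rho)=r^2\,A_kD_k\rho\,A_k^2\rho$, which is exactly the quantity treated in the second identity of Lemma~\ref{le35} (taken with $j=k$): for $p_k'\in\{0,1,2\}$ and $p_k\le 2$ it gives
\[
A_k^{p_k'}D_k^{p_k}(\gamma_1^k\beta_1^k)=\partial_k^{p_k}(r\partial_k\rho)+sO_{\lambda,\mathcal R}((sh)^2)=sO_{\lambda,\mathcal R}(1),
\]
and by Lemma~\ref{le32} (the case $|\alpha|=1$, $|\beta|=0$, in which every error term drops out) the leading factor is the clean expression $r\partial_k\rho=s\lambda\varphi\,\partial_k\psi$, so that $\partial_k(r\partial_k\rho)=-s\lambda^2\varphi(\partial_k\psi)^2+s\lambda\varphi\,\partial_k^2\psi$. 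The remaining factors $\sigma^k,\ A_k\sigma^k,\ D_k\sigma^k$ I will treat as tame: $\sigma^k$ and $D_k\sigma^k$ are $O(1)$ by Assumption~\ref{simg}, and since $\sigma^k$ is the sampling of a smooth coefficient one has $A_k\sigma^k=\sigma^k+O(h)$ (the estimate already used in the proof of Proposition~\ref{Carleman}), whence, via the product rules of Lemma~\ref{le21}, $A_k(\sigma^kA_k\sigma^k)=(\sigma^k)^2+O(h)$ and $D_k(\sigma^kA_k\sigma^k)=O(1)$. The one structural rule I will be careful to obey is that a discrete difference operator divides by $h$, so I will never differentiate an asymptotic expansion; instead I will first expand every outer $D_k$ by Lemma~\ref{le21} and only then substitute the estimates above. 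This is precisely why Lemma~\ref{le35} keeps $r^2A_kD_k\rho\,A_k^2\rho$ together: it carries the cancellation of the $s^2$-terms that a naive term-by-term expansion would split apart.

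With this in hand, the first identity is immediate: $\gamma_1^k\beta_1^k\sigma^kD_k\sigma^k=\bigl(s\lambda\varphi\,\partial_k\psi+sO_{\lambda,\mathcal R}((sh)^2)\bigr)O(1)$, and since $(sh)^2\le\mathcal R\,sh$ this equals $s\lambda\varphi\,O(1)+sO_{\lambda,\mathcal R}(sh)$. For the second identity I will expand, by Lemma~\ref{le21},
\[
D_k(\gamma_1^k\beta_1^k\sigma^kA_k\sigma^k)=D_k(\gamma_1^k\beta_1^k)\,A_k(\sigma^kA_k\sigma^k)+A_k(\gamma_1^k\beta_1^k)\,D_k(\sigma^kA_k\sigma^k),
\]
then insert $D_k(\gamma_1^k\beta_1^k)=-s\lambda^2\varphi(\partial_k\psi)^2+s\lambda\varphi\,\partial_k^2\psi+sO_{\lambda,\mathcal R}((sh)^2)$, $A_k(\gamma_1^k\beta_1^k)=s\lambda\varphi\,\partial_k\psi+sO_{\lambda,\mathcal R}((sh)^2)$, $A_k(\sigma^kA_k\sigma^k)=(\sigma^k)^2+O(h)$ and $D_k(\sigma^kA_k\sigma^k)=O(1)$. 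Collecting powers of $\lambda$, the $s\lambda^2$-part is exactly $-s\lambda^2\varphi(\sigma^k)^2(\partial_k\psi)^2$, every remaining honest term is $s\lambda\varphi\,O(1)$, and all cross terms and errors are $sO_{\lambda,\mathcal R}(sh)$ (using $h\le sh$ and $(sh)^2\le\mathcal R\,sh$). The third identity I will deduce by applying $A_k$ to the second: $A_k$ does not enlarge the sup-norm, so the $s\lambda\varphi\,O(1)$ and $sO_{\lambda,\mathcal R}(sh)$ bounds survive, while on the smooth leading term it acts as the identity modulo $O_\lambda(h^2)$, which is again absorbed into $sO_{\lambda,\mathcal R}(sh)$.

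For the fourth identity Lemma~\ref{le35} does not apply verbatim, since it allows at most two difference operators; here the $h^2$ prefactor buys the slack. I will first show $D_k^2(\gamma_1^k\beta_1^k\sigma^kA_k\sigma^k)=sO_{\lambda,\mathcal R}(1)$ by expanding, with $P=\gamma_1^k\beta_1^k$ and $Q=\sigma^kA_k\sigma^k$,
\[
D_k^2(PQ)=D_k^2P\,A_k^2Q+2\,A_kD_kP\,A_kD_kQ+A_k^2P\,D_k^2Q,
\]
bounding $D_k^2P$, $A_kD_kP$, $A_k^2P$ by Lemma~\ref{le35} (each $sO_{\lambda,\mathcal R}(1)$) and $A_k^2Q$, $A_kD_kQ$, $D_k^2Q$ by smoothness ($O(1)$). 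Then one further, crude, difference, $\|D_kw\|_{L_h^\infty}\le 2h^{-1}\|w\|_{L_h^\infty}$, gives $\|D_k^3(\gamma_1^k\beta_1^k\sigma^kA_k\sigma^k)\|_{L_h^\infty}=sO_{\lambda,\mathcal R}(h^{-1})$, hence $h^2D_k^3(\gamma_1^k\beta_1^k\sigma^kA_k\sigma^k)=sO_{\lambda,\mathcal R}(h)$, which (using $s\ge 1$) is subsumed in the asserted right-hand side $s\lambda\varphi\,O(1)+sO_{\lambda,\mathcal R}(sh)$. I expect the only real difficulty to be organizational: expanding the difference operators before substituting, tracking which factor picks up $A_k$ versus $D_k$ at each Leibniz step, and keeping the $\lambda$-powers ($s\lambda^2$ against $s\lambda$) separated so that the leading term is isolated with the correct sign and coefficient.
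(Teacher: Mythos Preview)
Your proposal is correct and follows essentially the same approach as the paper: expand the outer discrete operators by the Leibniz rules of Lemma~\ref{le21}, invoke the second identity of Lemma~\ref{le35} (with $j=k$) on the block $\gamma_1^k\beta_1^k=r^2A_kD_k\rho\,A_k^2\rho$, and read off the continuous leading term via Lemma~\ref{le32}, treating the $\sigma^k$-factors as tame. The only tactical difference is in the fourth identity, where the paper writes the full triple Leibniz expansion $D_k^3(\alpha_2\alpha_1)=D_k^3\alpha_2A_k^3\alpha_1+3D_k^2A_k\alpha_2A_k^2D_k\alpha_1+3A_k^2D_k\alpha_2D_k^2A_k\alpha_1+A_k^3\alpha_2D_k^3\alpha_1$ and bounds each piece separately, whereas you stop at $D_k^2$ and apply one crude $|D_kw|\le 2h^{-1}\|w\|_\infty$; both routes land on a bound absorbed by $s\lambda\varphi\,O(1)+sO_{\lambda,\mathcal R}(sh)$.
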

\begin{proof}
	By the second equality in Lemma \ref{le35} and Lemma \ref{le32}, we have
	\begin{align*}
\gamma_1^k\beta_1^k\sigma^kD_k\sigma^k&=(r\partial_k\rho+sO_{\lambda,\mathcal R}((sh)^2))\sigma^kD_k\sigma^k\\
	&=(s\lambda\varphi\partial_k\psi +sO_{\lambda,\mathcal R}((sh)^2))\sigma^kD_k\sigma^k.
	\end{align*}
Then with Assumption \ref{simg} we obtain the estimate of $\gamma_1^k\beta_1^k\sigma^kD_k\sigma^k$ since $\sigma^kD_k\sigma^k=O(1)$.

Applying Lemma \ref{le21}, we get
\begin{align*}
	D_k(\gamma_1^k\beta_1^k\sigma^kA_k\sigma^k)&=D_k(\gamma_1^k\beta_1^k)A_k(\sigma^kA_k\sigma^k)+A_k(\gamma_1^k\beta_1^k)D_k(\sigma^kA_k\sigma^k)\\
	&=D_k(\gamma_1^k\beta_1^k)((\sigma^k)^2+hO(1))+A_k(\gamma_1^k\beta_1^k)O(1),
\end{align*}
since
$\|A_k\sigma^k-\sigma^k\|_\infty\leq Ch$ and $
D_k(\sigma^kA_k\sigma^k)=D_k(\sigma^k)A^2_k\sigma^k+A_k\sigma^kA_kD_k\sigma^k=O(1)$. Moreover, by the second equality in Lemma \ref{le35} and Lemma \ref{le32}, we have
\begin{align*}
	D_k(\gamma_1^k\beta_1^k)&=\partial_k(r\partial_k\rho)+sO_{\lambda,\mathcal R}((sh)^2))=-s\lambda^2\varphi(\partial_k\psi)^2+s\lambda\varphi O(1)+sO_{\lambda,\mathcal R}((sh)^2)),\\
	A_k(\gamma_1^k\beta_1^k)&=r\partial_k\rho+sO_{\lambda,\mathcal R}((sh)^2)=s\lambda\varphi\partial_k\psi +sO_{\lambda,\mathcal R}((sh)^2).
\end{align*}
Then we get
$$D_k(\gamma_1^k\beta_1^k\sigma^kA_k\sigma^k)=-s\lambda^2\varphi(\sigma^k)^2(\partial_k\psi)^2+s\lambda\varphi O(1)+sO_{\lambda,\mathcal R}(sh).$$
Similarly, we obtain the third estimate.

To estimate $h^2D_k^3(\gamma_1^k\beta_1^k\sigma^kA_k\sigma^k)$, introducing $\alpha_1=\sigma^kA_k\sigma^k$ and $\alpha_2=\gamma_1^k\beta_1^k$ we first write
$$D_k^3(\gamma_1^k\beta_1^k\sigma^kA_k\sigma^k)=D_k^3\alpha_2A_k^3\alpha_1+3D_k^2A_k\alpha_2A_k^2D_k\alpha_1+3A_k^2D_k\alpha_2D_k^2A_k\alpha_1+A_k^3\alpha_2D_k^3\alpha_1.$$
We note that we have
$$A_k^3\alpha_1=O(1),\quad A_k^2D_k\alpha_1=O(1),\quad hD_k^2A_k\alpha_1=O(1),\quad h^2D_k^3\alpha_1=O(1),$$
and, with the second equality in  Lemma \ref{le35},
\begin{align*}
&hD_k^3\alpha_2=sO_{\lambda, \mathcal R}(1),\quad \quad D_k^2A_k\alpha_2=sO_{\lambda, \mathcal R}(1),\\& A_k^2D_k\alpha_2=sO_{\lambda, \mathcal R}(1), \quad A_k^3\alpha_2=s\lambda\varphi O(1)+sO_{\lambda, \mathcal R}((sh)^2).
\end{align*}
The estimate for $h^2D_k^3(\gamma_1^k\beta_1^k\sigma^kA_k\sigma^k)$ then follows.

\end{proof}
With  (\ref{AIK}) and Lemma \ref{leA1}, we get
\begin{align}\label{Ikk}
\begin{split}
	I_{kk}=&-\int_{\Omega_{h,k}^*}s\lambda^2\varphi(\sigma^k)^2(\partial_k\psi)^2 |D_kv|^2+\int_{\Omega_{h,k}^*}(s\lambda\varphi O(1)+sO_{\lambda\mathcal R}(sh) )|D_kv|^2
	\\
	&+2\int_{\Omega_h}s\lambda^2\varphi(\sigma^k)^2(\partial_k\psi)^2|A_kD_kv|^2+\int_{\Omega_h}(s\lambda\varphi O(1)+sO_{\lambda\mathcal R}(sh) )|A_kD_kv|^2
	\\
	&+\frac{h^2}{2}\int_{\Omega_h}s\lambda^2\varphi(\sigma^k)^2(\partial_k\psi)^2|D_k^2v|^2+h^2\int_{\Omega_h}(s\lambda\varphi O(1)+sO_{\lambda\mathcal R}(sh) )|D_k^2v|^2+Y_{1k},
	\end{split}
\end{align}
where
\begin{align*}
	Y_{1k}=& -\frac{h^2}{2}\int_{\partial_k\Omega_h}D_k^2(\gamma_1^k\beta_1^k\sigma^kA_k\sigma^k)t_r^k(|D_kv|^2)n_k-\frac{h}{2}\int_{\partial_k\Omega_h}t_r^k(D_k(\gamma_1^k\beta_1^k\sigma^kA_k\sigma^k)|D_kv|^2)\nonumber\\
	&+\int_{\partial_k\Omega_h}\gamma_1^k\beta_1^k\sigma^kA_k\sigma^kt_r^k(|D_kv|^2)n_k.
\end{align*}
	
	\subsubsection{Computation of $I_{jk}$,  $k\neq j$} The discrete integration by parts with respect to the difference operator $D_k$ (\ref{dk}) gives
	\begin{align*}
	I_{jk}=-\int_{\Omega_{h,k}^*}D_k(2\gamma_1^k\beta_1^j\sigma^jA_jD_jv)\sigma^kD_kv,
	\end{align*}
since $A_jD_jv=0$ in $\partial_k\Omega_h$ when $k\neq j$. Moreover, $I_{jk}$ can be rewritten as
\begin{align*}
	I_{jk}&=-\int_{\Omega_{h,k}^*}D_k(2\gamma_1^k\beta_1^j\sigma^j)A_kA_jD_jv\sigma^kD_kv-\int_{\Omega_{h,k}^*}A_k(2\gamma_1^k\beta_1^j\sigma^j)D_kA_jD_jv\sigma^kD_kv\\&:=I_{jk}^{(1)}+I_{jk}^{(2)},
	\end{align*}
due to Lemma \ref{le21}. Integrating by parts with respect to the average operator $A_k$ (\ref{ak}) for $I_{jk}^{(1)}$ gives
\begin{align*}
	I_{jk}^{(1)}&=-\int_{\Omega_h}A_k(D_k(2\gamma_1^k\beta_1^j\sigma^j)\sigma^kD_kv)A_jD_jv\\
	&=-\int_{\Omega_h}A_k(D_k(2\gamma_1^k\beta_1^j\sigma^j)\sigma^k)A_kD_kvA_jD_jv-\frac{h^2}{2}\int_{\Omega_h}D_k(D_k(\gamma_1^k\beta_1^j\sigma^j)\sigma^k)D^2_kvA_jD_jv,
\end{align*}
where we have used $A_jD_jv=0$ in $\partial_k\Omega_h$ for $k\neq j$ and Lemma \ref{le21}. Similarly,  we also have
\begin{align*}
I_{jk}^{(2)}	&=-\int_{\Omega_{h,jk}^*}A_j(A_k(2\gamma_1^k\beta_1^j\sigma^j)\sigma^kD_kv)D_kD_jv\\
&=-\int_{\Omega_{h,jk}^*}A_j(A_k(2\gamma_1^k\beta_1^j\sigma^j)\sigma^k)A_jD_kvD_kD_jv-\frac{h^2}{2}\int_{\Omega_{h,jk}^*}D_j(A_k(\gamma_1^k\beta_1^j\sigma^j)\sigma^k)|D_kD_jv|^2\\
&=\int_{\Omega_{h,k}^*}D_jA_j(A_k(\gamma_1^k\beta_1^j\sigma^j)\sigma^k)|D_kv|^2-\frac{h^2}{2}\int_{\Omega_{h,jk}^*}D_j(A_k(\gamma_1^k\beta_1^j\sigma^j)\sigma^k)|D_kD_jv|^2,
\end{align*}
where we have used $D_kv=0$ on $\partial_j(\Omega_{h,k}^*)$ for $k\neq j$ and $2A_jD_kvD_kD_jv=D_j(|D_kv|^2)$.

We thus have
\begin{align}\label{A11}
\begin{split}
	I_{jk}=&\int_{\Omega_{h,k}^*}D_jA_j(A_k(\gamma_1^k\beta_1^j\sigma^j)\sigma^k)|D_kv|^2-2\int_{\Omega_h}A_k(D_k(\gamma_1^k\beta_1^j\sigma^j)\sigma^k)A_kD_kvA_jD_jv\\&-\frac{h^2}{2}\int_{\Omega_h}D_k(D_k(\gamma_1^k\beta_1^j\sigma^j)\sigma^k)D^2_kvA_jD_jv-\frac{h^2}{2}\int_{\Omega_{h,jk}^*}D_j(A_k(\gamma_1^k\beta_1^j\sigma^j)\sigma^k)|D_kD_jv|^2
	\end{split}
\end{align}
\begin{lemma}\label{leA2}
For $sh\leq \mathcal R$ and $1\le  j, k\leq d$, 	we have	\begin{align*}
		D_jA_j(A_k(\gamma_1^k\beta_1^j\sigma^j)\sigma^k)&=-s\lambda^2\varphi \sigma^j\sigma^k(\partial_j\psi)^2+s\lambda\varphi O(1)+sO_{\lambda,\mathcal R}(sh),\\
		A_k(D_k(\gamma_1^k\beta_1^j\sigma^j)\sigma^k)&=-s\lambda^2\varphi \sigma^j\sigma^k(\partial_k\psi)(\partial_j\psi)+s\lambda\varphi O(1)+sO_{\lambda,\mathcal R}(sh),\\
		hD_k(D_k(\gamma_1^k\beta_1^j\sigma^j)\sigma^k)&=s\lambda\varphi O(1)+O_{\lambda,\mathcal R}(sh),\\
		D_j(A_k(\gamma_1^k\beta_1^j\sigma^j)\sigma^k)&=-s\lambda^2\varphi \sigma^j\sigma^k(\partial_j\psi)^2+s\lambda\varphi O(1)+sO_{\lambda,\mathcal R}(sh),
	\end{align*}
	where $\gamma_1^k=rA_k^2\rho$ and $\beta_1^j=rA_jD_j\rho$.
\end{lemma}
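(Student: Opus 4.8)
The plan is to follow the proof of Lemma~\ref{leA1} essentially verbatim, the only structural change being that the two weight factors now sit in different coordinate directions. The key observation is that $\gamma_1^k\beta_1^j = r^2(A_k^2\rho)(A_jD_j\rho)$ is precisely the quantity controlled by the second identity of Lemma~\ref{le35}: for any admissible multi-indices one has
\[
A_k^{p_k'}A_j^{p_j'}D_k^{p_k}D_j^{p_j}\big(\gamma_1^k\beta_1^j\big)=\partial_k^{p_k}\partial_j^{p_j}(r\partial_j\rho)+sO_{\lambda,\mathcal R}((sh)^2),
\]
and the leading continuous factor is made explicit by Lemma~\ref{le32} with $\alpha=e_j$: namely $r\partial_j\rho=s\lambda\varphi\,\partial_j\psi$, $\partial_j(r\partial_j\rho)=-s\lambda^2\varphi(\partial_j\psi)^2+s\lambda\varphi O(1)$, and $\partial_k(r\partial_j\rho)=-s\lambda^2\varphi(\partial_k\psi)(\partial_j\psi)+s\lambda\varphi O(1)$. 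These three expressions are the source of the explicit leading terms in the four claimed identities.

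Next I would reinsert the coefficients. Using the product rules of Lemma~\ref{le21} I push every average and difference operator in $A_k(\gamma_1^k\beta_1^j\sigma^j)$, $D_k(\gamma_1^k\beta_1^j\sigma^j)$, etc.\ onto the individual factors; the corrections of the form $\tfrac{h^2}{4}D_k(\cdot)D_k(\cdot)$ produced by $A_k(uv)$ contribute, once multiplied by the weight derivatives above, only terms of order $sO_{\lambda,\mathcal R}((sh)^2)$. Assumption~\ref{simg} guarantees that $\sigma^j,\sigma^k$ and all their discrete derivatives $D_k^p\sigma^j$ are $O(1)$, that $A_k\sigma^j=\sigma^j+O(h)$, hence $A_k(\sigma^j\sigma^k)=\sigma^j\sigma^k+O(h)$ and $D_k(\sigma^j\sigma^k)=O(1)$. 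Combining this with the previous paragraph yields $A_k(\gamma_1^k\beta_1^j\sigma^j)\,\sigma^k=(r\partial_j\rho)\,\sigma^j\sigma^k+sO_{\lambda,\mathcal R}(sh)$ and $D_k(\gamma_1^k\beta_1^j\sigma^j)\,\sigma^k=\partial_k(r\partial_j\rho)\,\sigma^j\sigma^k+sO_{\lambda,\mathcal R}(sh)$. Applying $A_j$ followed by $D_j$ to the first expression, and using $D_j\big(\varphi\cdot(\text{smooth})\big)=-\lambda\varphi\,\partial_j\psi\cdot(\text{smooth})+\varphi O(1)$, produces the first estimate; applying $A_k$ (which is the identity up to $O_{\lambda,\mathcal R}((sh)^2)$) to the second expression produces the second estimate; applying $D_j$ to the first expression gives the fourth estimate.

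For the third estimate I would treat the iterated difference $D_k\big(D_k(\gamma_1^k\beta_1^j\sigma^j)\,\sigma^k\big)$ in the same way as the $h^2D_k^3$ computation in Lemma~\ref{leA1}: expand the outer $D_k$ by the product rule, control $D_k^2(\gamma_1^k\beta_1^j)$ and $D_kA_k(\gamma_1^k\beta_1^j)$ through the second identity of Lemma~\ref{le35} (both are $sO_{\lambda,\mathcal R}(1)$), and note that every factor $\sigma^k,\sigma^j$ and its discrete derivatives are $O(1)$; the explicit factor $h$ in front then reduces every $sO_{\lambda,\mathcal R}(1)$ contribution to $O_{\lambda,\mathcal R}(sh)$, the surviving piece being at worst $s\lambda\varphi O(1)$ if one keeps the weight visible.

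The main obstacle is, as in Lemma~\ref{leA1}, purely bookkeeping rather than conceptual: one must keep track of the distinct discrete sets on which $\sigma^j$, $\sigma^k$ and the weight function are sampled as successive $D_k,D_j,A_k,A_j$ are applied, and verify that under the standing restriction $sh\le\mathcal R$ every correction term in the product rules --- in particular the $\lambda$-dependent ones, which are harmless once $\lambda$ is fixed --- is absorbed into $sO_{\lambda,\mathcal R}(sh)$. No ingredient beyond Lemmas~\ref{le21}, \ref{le32}, \ref{le35} and Assumption~\ref{simg} is required, and the resulting four identities feed directly into the analysis of $I_{jk}$, $k\neq j$, in (\ref{A11}).
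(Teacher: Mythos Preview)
Your proposal is correct and matches the paper's own proof, which simply states that ``the estimates all follow from Lemma~\ref{le32} and the second equality in Lemma~\ref{le35}, arguing as in the proof of Lemma~\ref{leA1}.'' You have supplied precisely this argument in detail, using the product rules of Lemma~\ref{le21} and Assumption~\ref{simg} to handle the coefficients $\sigma^j,\sigma^k$ exactly as in the $k=j$ case.
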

\begin{proof}
	The estimates all follow from Lemma \ref{le32} and the second equality in Lemma \ref{le35}, arguing as in the proof of Lemma \ref{leA1}.
\end{proof}

With (\ref{A11}) and Lemma \ref{leA2}, we obtain
\begin{align}\label{Ijk}
\begin{split}
	I_{jk}=&-\int_{\Omega_{h,k}^*}s\lambda^2\varphi \sigma^j\sigma^k(\partial_j\psi)^2|D_kv|^2+\int_{\Omega_{h,k}^*}(s\lambda\varphi O(1)+sO_{\lambda,\mathcal R}(sh))|D_kv|^2
	\\&+2\int_{\Omega_h}s\lambda^2\varphi \sigma^j\sigma^k(\partial_k\psi)(\partial_j\psi)A_kD_kvA_jD_jv+\int_{\Omega_h}(s\lambda\varphi O(1)+sO_{\lambda,\mathcal R}(sh))A_kD_kvA_jD_jv
	\\
	&+\frac{h^2}{2}\int_{\Omega_{h,jk}^*}s\lambda^2\varphi \sigma^j\sigma^k(\partial_j\psi)^2|D_kD_jv|^2+h^2\int_{\Omega_{h,jk}^*}(s\lambda\varphi O(1)+sO_{\lambda,\mathcal R}(sh))|D_kD_jv|^2
	\\&+h\int_{\Omega_h}(s\lambda\varphi O(1)+O_{\lambda,\mathcal R}(sh))D^2_kvA_jD_jv.
	\end{split}	
\end{align}
\subsubsection{Estimate of $(A_1v,B_1v)_{\Omega_h}$}
We now collect the terms (\ref{Ikk}) and (\ref{Ijk})  to write
$$(A_1v,B_1v)_{\Omega_h}=-\sum_{k=1}^d\int_{\Omega_{h,k}^*}s\lambda^2\varphi\sigma^k|\nabla_\sigma\psi|^2 |D_kv|^2+Y_1+\sum_{i=1}^5X_i,$$
where
\begin{align*}
	Y_1=\sum_{k=1}^dY_{1k}
	=&\sum_{k=1}^d\int_{\partial_k\Omega_h}\gamma_1^k\beta_1^k\sigma^kA_k\sigma^kt_r^k(|D_kv|^2)n_k-\frac{h}{2}\sum_{k=1}^d\int_{\partial_k\Omega_h}t_r^k(D_k(\gamma_1^k\beta_1^k\sigma^kA_k\sigma^k)|D_kv|^2)
	\\& -\frac{h^2}{2}\sum_{k=1}^d\int_{\partial_k\Omega_h}D_k^2(\gamma_1^k\beta_1^k\sigma^kA_k\sigma^k)t_r^k(|D_kv|^2)n_k,
\end{align*}

\begin{align*}
	X_{1}&=2\sum_{k=1}^d\int_{\Omega _h}s\lambda^2\varphi(\sigma^k)^2(\partial_k\psi)^2|A_kD_kv|^2+2\sum_{j,k=1 \atop j\neq k}^d\int_{\Omega_h}s\lambda^2\varphi \sigma^j\sigma^k(\partial_k\psi)(\partial_j\psi)A_kD_kvA_jD_jv\\
	&=2\int_{\Omega_h}s\lambda^2\varphi\left|\sum_{k=1}^d\sigma^k(\partial_k\psi)A_kD_kv\right|^2\geq 0,
\end{align*}

$$X_2=\frac{h^2}{2}\sum_{k=1}^d\int_{\Omega_h}s\lambda^2\varphi(\sigma^k)^2(\partial_k\psi)^2|D_k^2v|^2+\frac{h^2}{2}\sum_{j,k=1 \atop j\neq k}^d\int_{\Omega_{h,jk}^*}s\lambda^2\varphi \sigma^j\sigma^k(\partial_j\psi)^2|D_kD_jv|^2\geq 0,$$

$$X_3=\sum_{k=1}^d\int_{\Omega_{h,k}^*}(s\lambda\varphi O(1)+sO_{\lambda,\mathcal R}(sh) )|D_kv|^2+\sum_{k=1}^d\int_{\Omega_h}(s\lambda\varphi O(1)+sO_{\lambda,\mathcal R}(sh) )|A_kD_kv|^2,$$

$$X_4=\sum_{j,k=1 \atop j\neq k}^d\int_{\Omega_h}(s\lambda\varphi O(1)+sO_{\lambda,\mathcal R}(sh))A_kD_kvA_jD_jv$$
and
\begin{align*}
X_5=&\sum_{j,k=1 \atop j\neq k}^d\int_{\Omega_{h,jk}^*}h^2(s\lambda\varphi O(1)+sO_{\lambda,\mathcal R}(sh))|D_kD_jv|^2+\sum_{k=1}^d\int_{\Omega_h}h^2(s\lambda\varphi O(1)+sO_{\lambda,\mathcal R}(sh) )|D_k^2v|^2\\
&+\sum_{j,k=1 \atop j\neq k}^d\int_{\Omega_h}h(s\lambda\varphi O(1)+O_{\lambda,\mathcal R}(sh))D^2_kvA_jD_jv.
\end{align*}

We conclude with Cauchy-Schwarz inequalities that yields
$$|X_4|\leq\sum_{k=1}^d\int_{\Omega_h}(s\lambda\varphi |O(1)|+s|O_{\lambda,\mathcal R}(sh)|)|A_kD_kv|^2,$$
and
\begin{align*}
	|X_5|
\leq &\sum_{k=1}^d\int_{\Omega_h}h^2(s\lambda\varphi |O(1)|+s|O_{\lambda,\mathcal R}(sh)|)|D_k^2v|^2+\sum_{k=1}^d\int_{\Omega_h}(s\lambda\varphi |O(1)|+|O_{\lambda,\mathcal R}(sh)|)|A_kD_kv|^2\\&+\sum_{j,k=1 \atop j\neq k}^d\int_{\Omega_{h,jk}^*}h^2(s\lambda\varphi |O(1)|+s|O_{\lambda,\mathcal R}(sh)|)|D_kD_jv|^2.
\end{align*}
Then we complete the proof of Lemma \ref{le37}.

\subsection{Proof of Lemma \ref{le39}}

From the definitions of $A_2v$ and $B_1v$  in (\ref{abg}) we have
\begin{align*}
	(A_2v,B_1v)_{\Omega_h}&=\sum_{j,k=1}^d\int_{\Omega_h}2\sigma^jr^2D_j^2\rho A_j^2v\sigma^kA_kD_k\rho A_kD_kv\nonumber\\
	&:=\sum_{j,k=1}^d\int_{\Omega_h}2\gamma_2^j \beta_1^k\sigma^j\sigma^kA_j^2vA_kD_kv\nonumber\\
	&:=\sum_{j,k=1}^dJ_{jk},
\end{align*}
where $\gamma_2^j=rD_j^2\rho$ and $\beta_1^k=rA_kD_k\rho$.
\subsubsection{Computation of $J_{kk}$}
Since $2A_k^2vD_kA_kv=D_k(|A_kv|^2)$, we have
\begin{align*}
	J_{kk}=&\int_{\Omega_h}\gamma_2^k \beta_1^k(\sigma^k)^2D_k(|A_kv|^2)\\
	=&-\int_{\Omega_{h,k}^*}D_k(\gamma_2^k \beta_1^k(\sigma^k)^2)|A_kv|^2+\int_{\partial_k\Omega_h}\gamma_2^k \beta_1^k(\sigma^k)^2t_r^k(|A_kv|^2)n_k\\
	=&-\int_{\Omega_{h,k}^*}D_k(\gamma_2^k \beta_1^k(\sigma^k)^2)A_k(|v|^2)+\frac{h^2}{4}\int_{\Omega_{h,k}^*}D_k(\gamma_2^k \beta_1^k(\sigma^k)^2)|D_kv|^2\\
	&+\int_{\partial_k\Omega_h}\gamma_2^k \beta_1^k(\sigma^k)^2t_r^k(|A_kv|^2)n_k\\
	=&-\int_{\Omega_h}A_kD_k(\gamma_2^k \beta_1^k(\sigma^k)^2)|v|^2+\frac{h^2}{4}\int_{\Omega_{h,k}^*}D_k(\gamma_2^k \beta_1^k(\sigma^k)^2)|D_kv|^2\\
	&+\int_{\partial_k\Omega_h}\gamma_2^k \beta_1^k(\sigma^k)^2t_r^k(|A_kv|^2)n_k,
\end{align*}
where we have used (\ref{dk}), (\ref{ak}) and the fact that $u=0$ on $\partial_k\Omega_h$.
\begin{lemma}
	For $sh\leq \mathcal R$, 	we have
	\begin{align*}
		A_kD_k(\gamma_2^k \beta_1^k(\sigma^k)^2)&=-3s^3\lambda^4\varphi^3(\sigma^k)^2(\partial_k\psi)^4+(s\lambda\varphi)^3O(1)+s^2O_{\lambda,\mathcal R}(1)+s^3O_{\lambda,\mathcal R}((sh)^2),\\
		D_k(\gamma_2^k \beta_1^k(\sigma^k)^2)&=s^3O_{\lambda,\mathcal R}(1),
	\end{align*}
	where $\gamma_2^k=rD_k^2\rho$ and $\beta_1^k=rA_kD_k\rho$.
\end{lemma}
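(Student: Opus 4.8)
The plan is to follow the pattern of the proofs of Lemma~\ref{leA1} and Lemma~\ref{leA2}: expand the discrete products by the Leibniz-type identities of Lemma~\ref{le21}, separate the weight factor $\gamma_2^k\beta_1^k=r^2(A_kD_k\rho)(D_k^2\rho)$ from the coefficient factor $(\sigma^k)^2$, and then feed each factor into the symbolic calculus of Lemmas~\ref{le32}--\ref{le35} together with Assumption~\ref{simg}. The crucial observation is that $\gamma_2^k\beta_1^k=r^2(A_kD_k\rho)(D_k^2\rho)$ is exactly the combination occurring in the first identity of Lemma~\ref{le35} once one specialises $j=k$; hence, for $p\leq2$ and any $p'\in\mathbb{N}$,
\begin{align*}
A_k^{p'}D_k^{p}(\gamma_2^k\beta_1^k)=\partial_k^{p}\bigl(r^2(\partial_k\rho)\partial_k^2\rho\bigr)+s^3O_{\lambda,\mathcal R}((sh)^2),
\end{align*}
while Lemma~\ref{le33} (applied with $|\alpha|=1$, $|\beta|=2$, $|\delta|=p$) reads off the continuous symbol explicitly: for $p=1$ its principal part is $-3s^3\lambda^4\varphi^3(\partial_k\psi)^4$ modulo $(s\lambda\varphi)^3O(1)+s^2O_\lambda(1)$, for $p=0$ it is of size at most $(s\lambda\varphi)^3O(1)+s^2O_\lambda(1)$, and for $p=2$ of size $s^3\lambda^5\varphi^3O(1)+s^2O_\lambda(1)$; in particular only the case $p=1$ reaches order $\lambda^4$.

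For the second identity this is essentially immediate: by Lemma~\ref{le21},
\begin{align*}
D_k\bigl(\gamma_2^k\beta_1^k(\sigma^k)^2\bigr)=D_k(\gamma_2^k\beta_1^k)\,A_k\bigl((\sigma^k)^2\bigr)+A_k(\gamma_2^k\beta_1^k)\,D_k\bigl((\sigma^k)^2\bigr),
\end{align*}
and, by the display above (with $p=0,1$), both $A_k(\gamma_2^k\beta_1^k)$ and $D_k(\gamma_2^k\beta_1^k)$ are $s^3O_{\lambda,\mathcal R}(1)$, whereas $A_k((\sigma^k)^2)=O(1)$ and $D_k((\sigma^k)^2)=O(1)$ because $\sigma^k$ is the sampling of a fixed smooth coefficient with $M_0<\infty$ and $\epsilon_d<1$. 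Multiplying out gives $D_k(\gamma_2^k\beta_1^k(\sigma^k)^2)=s^3O_{\lambda,\mathcal R}(1)$.

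For the first identity, I would apply $A_k$ to the Leibniz expansion of $D_k(\gamma_2^k\beta_1^k(\sigma^k)^2)$ and use $A_k(uv)=A_kuA_kv+\tfrac{h^2}{4}D_kuD_kv$ to obtain, with $f=\gamma_2^k\beta_1^k$ and $g=(\sigma^k)^2$,
\begin{align*}
A_kD_k(fg)=(A_kD_kf)(A_k^2g)+(A_k^2f)(A_kD_kg)+\tfrac{h^2}{4}(D_k^2f)(A_kD_kg)+\tfrac{h^2}{4}(A_kD_kf)(D_k^2g).
\end{align*}
A Taylor expansion of the smooth sampling yields $A_k^2((\sigma^k)^2)=(\sigma^k)^2+O(h^2)$ and $A_kD_k((\sigma^k)^2)=D_k^2((\sigma^k)^2)=O(1)$. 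Substituting the estimates for $A_k^{p'}D_k^pf$, the only term that carries the power $\lambda^4$ is $(A_kD_kf)(A_k^2g)$, whose principal part is $-3s^3\lambda^4\varphi^3(\sigma^k)^2(\partial_k\psi)^4$; the term $(A_k^2f)(A_kD_kg)$ contributes only to $(s\lambda\varphi)^3O(1)+s^2O_{\lambda,\mathcal R}(1)$, and the two $h^2$-weighted terms, together with the $O(h^2)$ slack in $A_k^2((\sigma^k)^2)$ and the $s^3O_{\lambda,\mathcal R}((sh)^2)$ errors from Lemma~\ref{le35}, collapse into a single $s^3O_{\lambda,\mathcal R}((sh)^2)$ after trading surplus powers of $h$ for $1/s$ (for instance $s^3\lambda^m h^2=\lambda^m s^{-2}\bigl(s^3(sh)^2\bigr)\leq\lambda^m s^3(sh)^2$, which is legitimate since $sh\leq\mathcal R$ and $s\geq1$). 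This gives the stated identity.

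The routine part is the product-rule bookkeeping and the direct reading of symbols from Lemma~\ref{le33}. The one place that requires genuine care---and where an error would most easily slip in---is the final absorption step: confirming that \emph{every} remainder produced in the expansion, once multiplied by its $s^3\lambda^{\leq5}\varphi^3$ prefactor, really does fit inside $s^3O_{\lambda,\mathcal R}((sh)^2)$ on top of the fixed $(s\lambda\varphi)^3O(1)+s^2O_{\lambda,\mathcal R}(1)$ terms, bearing in mind that the implicit constants now depend on $\lambda$ and on the threshold $\mathcal R$.
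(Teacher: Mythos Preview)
Your proposal is correct and follows essentially the same approach as the paper, which simply states that the estimates follow from the first equality in Lemma~\ref{le35} and Lemma~\ref{le33} arguing as in the proof of Lemma~\ref{leA1}. You have spelled out precisely this argument: separate $(\sigma^k)^2$ from $\gamma_2^k\beta_1^k=r^2(A_kD_k\rho)(D_k^2\rho)$ via Lemma~\ref{le21}, invoke Lemma~\ref{le35} with $j=k$ for the weight factor, and read off the principal symbol and remainders from Lemma~\ref{le33}; your careful absorption of the $h^2$-weighted cross terms into $s^3O_{\lambda,\mathcal R}((sh)^2)$ (using $s\geq1$) is the only bookkeeping the paper leaves implicit.
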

\begin{proof}
	The estimates follow from the first equality in Lemma \ref{le35} and Lemma \ref{le33} arguing as in the proof of Lemma \ref{leA1}.
\end{proof}
Then we get
\begin{align}\label{Jkk}
\begin{split}
	J_{kk}=&3\int_{\Omega_h}s^3\lambda^4\varphi^3(\sigma^k)^2(\partial_k\psi)^4|v|^2+\int_{\Omega_h}((s\lambda\varphi)^3O(1)+s^2O_{\lambda,\mathcal R}(1)+s^3O_{\lambda,\mathcal R}((sh)^2))|v|^2\\
	&+\int_{\Omega_{h,k}^*}sO_{\lambda,\mathcal R}((sh)^2))|D_kv|^2+Y_{2k},
	\end{split}
\end{align}
where $$Y_{2k}=\int_{\partial_k\Omega_h}\gamma_2^k \beta_1^k(\sigma^k)^2t_r^k(|A_kv|^2)n_k.$$
\subsubsection{Computation of $J_{jk}$, $j\neq k$}
The discrete integration by parts with respect to  the average operator $A_j$ (\ref{ak}) gives
$$J_{jk}=\int_{\Omega_{h,j}^*}A_j(2\gamma_2^j \beta_1^k\sigma^j\sigma^kA_kD_kv)A_jv,$$
since $A_kD_kv=0$ on $\partial_j\Omega_h$ when $k\neq j$. Moreover, $J_{jk}$ can be rewritten as
\begin{align}\label{A15}
	J_{jk}&=\int_{\Omega_{h,j}^*}A_j(2\gamma_2^j \beta_1^k\sigma^j\sigma^k)A_jA_kD_kvA_jv+\frac{h^2}{2}\int_{\Omega_{h,j}^*}D_j(\gamma_2^j \beta_1^k\sigma^j\sigma^k)D_jA_kD_kvA_jv\nonumber\\
	&:=J_{jk}^{(1)}+\frac{h^2}{2}\int_{\Omega_{h,j}^*}D_j(\gamma_2^j \beta_1^k\sigma^j\sigma^k)D_jA_kD_kvA_jv.
\end{align}
due to Lemma \ref{le21}.  Further, by (\ref{ak}) and  Lemma \ref{le21}, we have
\begin{align}
	J_{jk}^{(1)}&=\int_{\Omega_{h,jk}^*}A_k(A_j(2\gamma_2^j \beta_1^k\sigma^j\sigma^k)A_jv)A_jD_kv\nonumber\\
	&=\int_{\Omega_{h,jk}^*}A_kA_j(2\gamma_2^j \beta_1^k\sigma^j\sigma^k)A_kA_jvA_jD_kv+\frac{h^2}{2}\int_{\Omega_{h,jk}^*}D_kA_j(\gamma_2^j \beta_1^k\sigma^j\sigma^k)|A_jD_kv|^2\nonumber\\
	&=\int_{\Omega_{h,jk}^*}A_kA_j(\gamma_2^j \beta_1^k\sigma^j\sigma^k)D_k(|A_jv|^2)+\frac{h^2}{2}\int_{\Omega_{h,jk}^*}D_kA_j(\gamma_2^j \beta_1^k\sigma^j\sigma^k)|A_jD_kv|^2\nonumber\\
	&:=J_{jk}^{(11)}+\frac{h^2}{2}\int_{\Omega_{h,jk}^*}D_kA_j(\gamma_2^j \beta_1^k\sigma^j\sigma^k)|A_jD_kv|^2,
\end{align}
where we have used $A_jv=0$ on $\partial_k(\Omega_{h,j}^*)$ for $k\neq j$.  Using (\ref{dk}), Lemma \ref{le21} and (\ref{ak}), we obtain
\begin{align}\label{A17}
	J_{jk}^{(11)}&=-\int_{\Omega_{h,j}^*}D_kA_kA_j(\gamma_2^j \beta_1^k\sigma^j\sigma^k)|A_jv|^2\nonumber\\
	&=-\int_{\Omega_{h,j}^*}D_kA_kA_j(\gamma_2^j \beta_1^k\sigma^j\sigma^k)A_j(|v|^2)+\frac{h^2}{4}\int_{\Omega_{h,j}^*}D_kA_kA_j(\gamma_2^j \beta_1^k\sigma^j\sigma^k)|D_jv|^2\nonumber\\
	&=-\int_{\Omega_h}D_kA_kA_j^2(\gamma_2^j \beta_1^k\sigma^j\sigma^k)|v|^2+\frac{h^2}{4}\int_{\Omega_{h,j}^*}D_kA_kA_j(\gamma_2^j \beta_1^k\sigma^j\sigma^k)|D_jv|^2,
\end{align}
by virtue of  $A_jv=0$ on $\partial_k(\Omega_{h,j}^*)$ for $k\neq j$ and $v=0$ on $\partial_j\Omega_h$.

Combing (\ref{A15})-(\ref{A17}), we have
\begin{align}\label{A18}
	\begin{split}
		J_{jk}=&-\int_{\Omega_h}D_kA_kA_j^2(\gamma_2^j \beta_1^k\sigma^j\sigma^k)|v|^2+\frac{h^2}{2}\int_{\Omega_{h,jk}^*}D_kA_j(\gamma_2^j \beta_1^k\sigma^j\sigma^k)|A_jD_kv|^2\\
		&+\frac{h^2}{4}\int_{\Omega_{h,j}^*}D_kA_kA_j(\gamma_2^j \beta_1^k\sigma^j\sigma^k)|D_jv|^2+\frac{h^2}{2}\int_{\Omega_{h,j}^*}D_j(\gamma_2^j \beta_1^k\sigma^j\sigma^k)D_jA_kD_kvA_jv.
	\end{split}
\end{align}
\begin{lemma}\label{leA4}
	For $sh\leq \mathcal R$ and $1\le  j, k\leq d$, 	we have
	\begin{align*}
		D_kA_kA_j^2(\gamma_2^j \beta_1^k\sigma^j\sigma^k)=&-3s^3\lambda^4\varphi^3\sigma_j\sigma^k(\partial_k\psi)^2(\partial_j\psi)^2\\
		&+(s\lambda\varphi)^3O(1)+s^2O_{\lambda,\mathcal R}(1)+s^3O_{\lambda,\mathcal R}((sh)^2),\\
		D_kA_j(\gamma_2^j \beta_1^k\sigma^j\sigma^k)=&s^3O_{\lambda,\mathcal R}(1),\\
		 D_kA_kA_j(\gamma_2^j \beta_1^k\sigma^j\sigma^k)=&s^3O_{\lambda,\mathcal R}(1),\\
		\quad D_j(\gamma_2^j \beta_1^k\sigma^j\sigma^k)=&s^3O_{\lambda,\mathcal R}(1),
	\end{align*}
	where $\gamma_2^j=rD_j^2\rho$ and $\beta_1^k=rA_kD_k\rho$.
\end{lemma}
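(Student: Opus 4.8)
The plan is to imitate the proofs of Lemma \ref{leA1} and Lemma \ref{leA2}, the only new feature being that here \emph{two} smooth coefficients $\sigma^j,\sigma^k$ sit next to a weight factor of exactly the shape handled by Lemma \ref{le35}. First I would observe that
\[
\gamma_2^j\beta_1^k=(rD_j^2\rho)(rA_kD_k\rho)=r^2(A_kD_k\rho)(D_j^2\rho),
\]
which is precisely the form $r^2A_kD_k\rho\,D_j^2\rho$ treated in the first identity of Lemma \ref{le35}. Hence, for every choice of $p_k,p_k',p_j,p_j'$ with $p_k+p_j\le 2$, the operator $A_k^{p_k'}A_j^{p_j'}D_k^{p_k}D_j^{p_j}$ applied to $\gamma_2^j\beta_1^k$ equals $\partial_k^{p_k}\partial_j^{p_j}(r^2(\partial_k\rho)\partial_j^2\rho)+s^3O_{\lambda,\mathcal R}((sh)^2)$, and the continuous factor is then read off from Lemma \ref{le33} with $\alpha=e_k$, $\beta=2e_j$, $\delta=p_ke_k+p_je_j$. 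The bookkeeping point I would record at this stage is that every such expression is of size $s^3\varphi^3$ times a polynomial in $\lambda$ of degree at most $3+|\delta|$.

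Next I would peel the coefficients $\sigma^j\sigma^k$ off the weight factor using the discrete Leibniz rules of Lemma \ref{le21}, exactly as in the proof of Lemma \ref{leA1}. Writing $\gamma_2^j\beta_1^k\sigma^j\sigma^k=(\gamma_2^j\beta_1^k)\cdot(\sigma^j\sigma^k)$ and distributing the operators over the product, the ``main'' contribution is $(\sigma^j\sigma^k)$ times the operator applied to $\gamma_2^j\beta_1^k$, while every other term has at least one average or difference operator falling on $\sigma^j\sigma^k$. Since $\sigma^j,\sigma^k$ are samplings of $C^\infty$ coefficients, Assumption \ref{simg} gives $\|D_i(\sigma^j)\|_{L_h^\infty},\|D_i(\sigma^k)\|_{L_h^\infty}=O(1)$ and $\|A_i(\sigma^j)-\sigma^j\|_{L_h^\infty},\|A_i(\sigma^k)-\sigma^k\|_{L_h^\infty}=O(h)$ (and likewise for their product). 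Thus: an $A_i$ landing on $\sigma^j\sigma^k$ produces $\sigma^j\sigma^k+O(h)$, and the $O(h)$ tail, multiplied by the $s^3\varphi^3$-sized weight factor and using $sh\le\mathcal R$ together with $s\ge1$, lands in $s^3O_{\lambda,\mathcal R}((sh)^2)$; a $D_i$ landing on $\sigma^j\sigma^k$ produces an $O(1)$ factor while the weight factor then carries one differentiation, hence one power of $\lambda$, less, so such a term is of size $s^3\varphi^3\lambda^{\,\le3}O(1)=(s\lambda\varphi)^3O(1)$ because the factor $\varphi^3$ is retained; finally the $\tfrac{h^2}{4}D_iF\,D_iG$-type pieces coming from $A_i(FG)=A_iF\,A_iG+\tfrac{h^2}{4}D_iF\,D_iG$ contribute $s(sh)^2O_\lambda(1)\subset s^3O_{\lambda,\mathcal R}((sh)^2)$. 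These are exactly the error classes allowed in the statement.

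It then remains to evaluate the main contribution in each case. For the first identity, $D_kA_kA_j^2(\gamma_2^j\beta_1^k)$ falls under Lemma \ref{le35} with $p_k=1$, $p_j=0$, $p_k'=1$, $p_j'=2$, giving $\partial_k(r^2(\partial_k\rho)\partial_j^2\rho)+s^3O_{\lambda,\mathcal R}((sh)^2)$; then Lemma \ref{le33} with $\alpha=e_k$, $\beta=2e_j$, $\delta=e_k$ yields
\[
\partial_k(r^2(\partial_k\rho)\partial_j^2\rho)=-3s^3\lambda^4\varphi^3(\partial_k\psi)^2(\partial_j\psi)^2+(s\lambda\varphi)^3O(1)+s^2O_{\lambda,\mathcal R}(1),
\]
and multiplying by $\sigma^j\sigma^k=O(1)$ and adding the cross terms of the previous paragraph gives the first claimed expression. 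For the remaining three identities no precise leading term is needed: $D_kA_j$, $D_kA_kA_j$ and $D_j$ all satisfy the constraint $p_k+p_j\le2$ of Lemma \ref{le35}, so the main contribution is already $s^3O_{\lambda,\mathcal R}(1)$, and the Leibniz cross terms are dominated exactly as above. The step I expect to demand the most care is this bookkeeping of the cross terms: one must keep track of the fact that every descendant of $\gamma_2^j\beta_1^k$ under $A_i,D_i$ retains the factor $\varphi^3$ (so that the $\lambda$-subleading pieces are absorbed into $(s\lambda\varphi)^3O(1)$ rather than only into $s^3O_{\lambda,\mathcal R}(1)$), and that the sole source of the top power $\lambda^4$ is the single differentiation carried by the $D_k$ in $D_kA_kA_j^2$, which acts on the weight factor only in the main term.
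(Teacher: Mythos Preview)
Your approach is correct and matches the paper's own proof, which simply says the estimates follow from the first equality in Lemma~\ref{le35} together with Lemma~\ref{le33}, arguing as in the proof of Lemma~\ref{leA1}. One small bookkeeping slip: the $O(h)$ tail from $A_i(\sigma^j\sigma^k)=\sigma^j\sigma^k+O(h)$ multiplied by the $s^3O_{\lambda,\mathcal R}(1)$ weight gives $s^3O_{\lambda,\mathcal R}(h)=s^2O_{\lambda,\mathcal R}(sh)$, which is absorbed into $s^2O_{\lambda,\mathcal R}(1)$ rather than into $s^3O_{\lambda,\mathcal R}((sh)^2)$ as you wrote---but this does not affect the conclusion.
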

\begin{proof}
	The estimates follow from the first equality in Lemma \ref{le35} and Lemma \ref{le33} arguing as in the proof of Lemma \ref{leA1}.
\end{proof}
Using Lemma \ref{leA4} and the facts that $|A_jD_kv|^2\leq A_j(|D_kv|^2)$ and $D_kv=0$ on $\partial_j(\Omega_{h,k}^*)$, we get

\begin{align}
	\frac{h^2}{2}\left|\int_{\Omega_{h,jk}^*}D_kA_j(\gamma_2^j \beta_1^k\sigma^j\sigma^k)|A_jD_kv|^2\right|&\leq \int_{\Omega_{h,jk}^*}s|O_{\lambda,\mathcal R}((sh)^2)|A_j(|D_kv|^2)\nonumber\\
	&=\int_{\Omega_{h,k}^*}s|O_{\lambda,\mathcal R}((sh)^2)||D_kv|^2.
\end{align}
By Young's inequality and Lemma \ref{leA4} we note that
\begin{align}\label{A20}
	&\frac{h^2}{2}\left|\int_{\Omega_{h,j}^*}D_j(\gamma_2^j \beta_1^k\sigma^j\sigma^k)D_jA_kD_kvA_jv\right|\nonumber\\
	\leq &s^3(sh)\int_{\Omega_{h,j}^*}|O_{\lambda,\mathcal R}(1)||A_jv|^2+sh^2(sh)\int_{\Omega_{h,j}^*}|O_{\lambda,\mathcal R}(1)||D_jD_kA_kv|^2\nonumber\\
	\leq &s^3(sh)\int_{\Omega_{h,j}^*}|O_{\lambda,\mathcal R}(1)|A_j(|v|^2)+s(sh)\int_{\Omega_{h,j}^*}|O_{\lambda,\mathcal R}(1)|A_j(|D_kA_kv|^2)\nonumber\\
	=&s^3\int_{\Omega_h}|O_{\lambda,\mathcal R}(sh)||v|^2+s\int_{\Omega_h}|O_{\lambda,\mathcal R}(sh)|D_kA_kv|^2,
\end{align}
since $v=0$ on $\partial_j\Omega_h$ and $D_kA_kv=0$ on $\partial_j\Omega_h$ for $k\neq j$ and using Corollary \ref{co22}.

In terms of  (\ref{A18})-(\ref{A20}) and Lemma \ref{leA4}, we obtain
\begin{align}\label{Jjk}
	\begin{split}
		J_{jk}\geq &3\int_{\Omega_h}s^3\lambda^4\varphi^3\sigma_j\sigma^k(\partial_k\psi)^2(\partial_j\psi)^2|v|^2-\int_{\Omega_h}s|O_{\lambda,\mathcal R}(sh)||D_kA_kv|^2\\
		&-\int_{\Omega_h}((s\lambda\varphi)^3|O(1)|+s^2|O_{\lambda,\mathcal R}(1)|+s^3|O_{\lambda,\mathcal R}(sh)||v|^2
		\\
		&-\int_{\Omega_{h,k}^*}s|O_{\lambda,\mathcal R}((sh)^2)||D_kv|^2-\int_{\Omega_{h,j}^*}s|O_{\lambda,\mathcal R}((sh)^2)||D_jv|^2	\end{split}
\end{align}
\subsubsection{Estimate of $(A_2v,B_1v)_{\Omega_h}$}
Combining (\ref{Jkk}) and (\ref{Jjk}),  the Lemma follows.
\subsection{Proof of Lemma \ref{le310}}

From the definitions of $A_1v$ and $B_2v$  in (\ref{abg}) we have
\begin{align}\label{bc1}
	(A_1v,B_2v)_{\Omega_h}=-\sum_{k=1}^d\int_{\Omega_h} 2srA_k^2\rho D_k(\sigma^kD_kv)(\Delta_\sigma\varphi) v
	\end{align}
For concision we now set $\omega_1^k=rA_k^2\rho(\Delta_\sigma\varphi)$. The discrete integration by parts (\ref{dk}) yields
\begin{align}\label{bc2}
	-\int_{\Omega_h} 2s\omega_1^k D_k(\sigma^kD_kv) v&=\int_{\Omega_{h,k}^*}2s D_k(\omega_1^kv) \sigma^kD_kv\nonumber\\
	&=\int_{\Omega_{h,k}^*} 2sA_k(\omega_1^k)\sigma^k|D_kv|^2+\int_{\Omega_{h,k}^*}2s D_k(\omega_1^k)\sigma^kA_kvD_kv,
\end{align}
where we have used $v=0$ on $\partial_k\Omega_h$ and Lemma \ref{le21}.

\begin{lemma}\label{leA5}
For $sh\leq \mathcal R$, 	we have	\begin{align*}
		A_k(\omega_1^k)&=\lambda^2\varphi|\nabla_\sigma\psi|^2+\lambda\varphi O(1)+O_{\lambda,\mathcal R}(h+(sh)^2) ,\\
		D_k(\omega_1^k)&=O_{\lambda,\mathcal R}(1),
	\end{align*}
	where $\omega_1^k=rA_k^2\rho(\Delta_\sigma\varphi)$.
\end{lemma}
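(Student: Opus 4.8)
The plan is to handle this exactly as in the proofs of Lemma \ref{leA1}, Lemma \ref{leA2} and Lemma \ref{leA4}: factor $\omega_1^k=(rA_k^2\rho)(\Delta_\sigma\varphi)$ into the purely discrete weight factor $rA_k^2\rho$ and the sampled smooth factor $\Delta_\sigma\varphi$, estimate each separately, and then recombine using the Leibniz rules of Lemma \ref{le21}.

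First I would make the continuous factor explicit. From $\varphi=e^{-\lambda\psi}$ one has $\partial_j\varphi=-\lambda(\partial_j\psi)\varphi$ and $\partial_j^2\varphi=\lambda^2(\partial_j\psi)^2\varphi-\lambda(\partial_j^2\psi)\varphi$, whence
\[
\Delta_\sigma\varphi=\sum_{j=1}^d\sigma^j\partial_j^2\varphi=\lambda^2\varphi|\nabla_\sigma\psi|^2-\lambda\varphi\sum_{j=1}^d\sigma^j\partial_j^2\psi=\lambda^2\varphi|\nabla_\sigma\psi|^2+\lambda\varphi\,O(1),
\]
where the coefficient $-\sum_j\sigma^j\partial_j^2\psi$ of $\lambda\varphi$ is bounded independently of $\lambda,s,h$ thanks to Assumption \ref{as31} and $M_0<\infty$ in Assumption \ref{simg}. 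Since $\psi\in C^\infty(\overline{\widetilde\Omega})$ and the $\sigma^j$ are samplings of smooth functions on $\widetilde\Omega$, a Taylor expansion of the average and difference quotients gives $A_k(\Delta_\sigma\varphi)=\Delta_\sigma\varphi+O_\lambda(h^2)$ and $D_k(\Delta_\sigma\varphi)=\partial_k(\Delta_\sigma\varphi)+O_\lambda(h^2)=O_\lambda(1)$; here the enlargement $\widetilde\Omega$ is precisely what legitimises applying $A_k,D_k$ to these functions near $\partial\Omega$.

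For the discrete factor I would invoke the existing weight estimates: by the first identity of Lemma \ref{le34} with $p=2$, $rA_k^2\rho=1+O_{\lambda,\mathcal R}((sh)^2)$, and since $A_k$ fixes constants and does not increase the sup-norm this gives $A_k(rA_k^2\rho)=1+O_{\lambda,\mathcal R}((sh)^2)=O_{\lambda,\mathcal R}(1)$; while Lemma \ref{lexx} with $p=1$ gives the genuine difference-quotient bound $D_k(rA_k^2\rho)=O_{\lambda,\mathcal R}((sh)^2)$. Then the product rule of Lemma \ref{le21} gives
\[
A_k(\omega_1^k)=A_k(rA_k^2\rho)A_k(\Delta_\sigma\varphi)+\tfrac{h^2}{4}D_k(rA_k^2\rho)D_k(\Delta_\sigma\varphi),\qquad D_k(\omega_1^k)=D_k(rA_k^2\rho)A_k(\Delta_\sigma\varphi)+A_k(rA_k^2\rho)D_k(\Delta_\sigma\varphi),
\]
and substituting the above yields $A_k(\omega_1^k)=\Delta_\sigma\varphi+O_{\lambda,\mathcal R}(h^2+(sh)^2)=\lambda^2\varphi|\nabla_\sigma\psi|^2+\lambda\varphi\,O(1)+O_{\lambda,\mathcal R}(h+(sh)^2)$ (absorbing $h^2\le h$ since $h<1$), and $D_k(\omega_1^k)=O_{\lambda,\mathcal R}((sh)^2)\,O_\lambda(1)+O_{\lambda,\mathcal R}(1)\,O_\lambda(1)=O_{\lambda,\mathcal R}(1)$.

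The only real point of care — which I would flag as the main obstacle — is the bookkeeping: one must keep the $\lambda$-dependence strictly separated from the $sh$-dependence so the remainders land in exactly the advertised form; one must not estimate $D_k(rA_k^2\rho)$ by merely differentiating the small quantity $rA_k^2\rho-1$, since dividing a quantity of size $(sh)^2$ by $h$ only yields $O_{\lambda,\mathcal R}(s)$, which is why Lemma \ref{lexx} is invoked rather than Lemma \ref{le34} for the $D_k$ term; and one must verify that every operator acts on the correct staggered grid, which is exactly what the enlarged neighbourhood $\widetilde\Omega$ of Assumption \ref{as31} guarantees.
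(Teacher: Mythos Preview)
Your proof is correct and follows exactly the approach the paper takes: expand $\Delta_\sigma\varphi=\lambda^2\varphi|\nabla_\sigma\psi|^2+\lambda\varphi\,O(1)$, then combine the first identity of Lemma~\ref{le34} and the second identity of Lemma~\ref{lexx} for the factor $rA_k^2\rho$ with the Leibniz rules of Lemma~\ref{le21}. Your write-up is simply more explicit than the paper's two-line proof, and your remark that Lemma~\ref{lexx} (rather than naively differencing the estimate from Lemma~\ref{le34}) is needed for $D_k(rA_k^2\rho)$ is exactly the right point.
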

\begin{proof}Observing that
	$$\Delta_\sigma\varphi=\sum_{k=1}^d\sigma^k\partial_k^2\varphi=-\sum_{k=1}^d\sigma^k\partial_k(\lambda\varphi\partial_k\psi)=\lambda^2\varphi|\nabla_\sigma\psi|^2+\lambda\varphi O(1),$$
	by the first equality in Lemma \ref{le34} and the second equality in Lemma \ref{lexx}, we obtain the desired results.
\end{proof}
By  the discrete integration by parts (\ref{ak}) and Young's inequality, we obtain
\begin{align}\label{bc3}
	\left|\int_{\Omega_{h,k}^*}s O_{\lambda,\mathcal R}(1)\sigma^kA_kvD_kv\right|&\leq \int_{\Omega_{h,k}^*}s^2 |O_{\lambda,\mathcal R}(1)||A_kv |^2+\int_{\Omega_{h,k}^*} |O_{\lambda,\mathcal R}(1)||D_kv |^2\nonumber\\
	&\leq \int_{\Omega_{h,k}^*}s^2 |O_{\lambda,\mathcal R}(1)|A_k(|v |^2)+\int_{\Omega_{h,k}^*} |O_{\lambda,\mathcal R}(1)||D_kv |^2\nonumber\\
	&=\int_{\Omega_h}s^2 |O_{\lambda,\mathcal R}(1)||v |^2+\int_{\Omega_k^*} |O_{\lambda,\mathcal R}(1)||D_kv |^2,
\end{align}
since $|A_kv|^2\leq A_k(|v|^2)$ and $v=0$ on $\partial_k\Omega_h$.

Consequently, we obtain the estimate of $(A_1v,B_2v)_{\Omega_h}$  from (\ref{bc1}), (\ref{bc2}), Lemma \ref{A5} and (\ref{bc3}).

Thus the proof of  Lemma \ref{le310} is complete.

\subsection{Proof of Lemma \ref{le311}}

From the definitions of $A_2v$ and $B_2v$  in (\ref{abg}) we have
\begin{align}\label{bc4}
	(A_2v,B_2v)_{\Omega_h} &=-\sum_{k=1}^d\int_{\Omega_h} 2s\sigma^krD_k^2\rho A_k^2v(\Delta_\sigma\varphi) v\nonumber\\
	&=\sum_{k=1}^d\left(-\int_{\Omega_h}2s\sigma^krD_k^2\rho (\Delta_\sigma\varphi) |v|^2-\frac{h^2}{2}\int_{\Omega_h} s\sigma^krD_k^2\rho (\Delta_\sigma\varphi)vD_k^2v\right)\nonumber\\
	&:=\sum_{k=1}^d\left(-\int_{\Omega_h} 2s\omega_2^k |v|^2-\frac{h^2}{2}\int_{\Omega_h} s\omega_2^kvD_k^2v\right)\nonumber\\
	&:=Q_{1k}+Q_{2k},
	\end{align}
where $\omega_2^k=\sigma^k rD_k^2\rho(\Delta_\sigma\varphi)$ and we have used $A_k^2v=v+\frac{h^2}{4}D_k^2v$.
 By using integration by parts (\ref{dk}) twice, we can prove
\begin{align}\label{bc5}
	Q_{2k}&=\frac{sh^2}{2}\int_{\Omega_{h,k}^*}D_k( \omega_2^kv)D_kv\nonumber\\
	&=\frac{sh^2}{2}\int_{\Omega_{h,k}^*}A_k( \omega_2^k)|D_kv|^2+\frac{sh^2}{4}\int_{\Omega_{h,k}^*}D_k( \omega_2^k)D_k(|v|^2)\nonumber\\
	&=\frac{sh^2}{2}\int_{\Omega_{h,k}^*}A_k( \omega_2^k)|D_kv|^2-\frac{sh^2}{4}\int_{\Omega_h}D_k^2( \omega_2^k)|v|^2,
\end{align}
where we have used $v=0$ on $\partial_k\Omega_h$ and $A_kvD_kv=\frac{1}{2}D_k(|v|^2)$.

Similar to Lemma \ref{leA5} and Lemma \ref{leA1}, we have the following Lemma.
\begin{lemma}\label{bc6}
For $sh\leq \mathcal R$, 	we have
\begin{align*}
	\omega_2^k&=s^2\lambda^4\varphi^3|\nabla_\sigma\psi|^2\sigma^k(\partial_k\psi)^2+s^2\lambda^3\varphi^3O(1)+sO_\lambda(1)+s^2O_{\lambda,\mathcal R}((sh)^2),\\
	A_k( \omega_2^k)&=s^2O_{\lambda,\mathcal R}(1),\\
	h^2D_k^2( \omega_2^k)&=sO_{\lambda,\mathcal R}(sh),
	\end{align*}
	where $\omega_2^k=\sigma^k rD_k^2\rho(\Delta_\sigma\varphi)$.
\end{lemma}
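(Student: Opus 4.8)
The plan is to expand $\omega_2^k = \sigma^k\,(rD_k^2\rho)\,(\Delta_\sigma\varphi)$ into its three factors and control each by the calculus lemmas of Section~3.2, exactly as in the proof of Lemma~\ref{leA1}. For the middle factor, the third estimate of Lemma~\ref{le34} combined with Lemma~\ref{le32} (applied to the multi-index $\alpha = 2e_k$, $\beta = 0$) gives
\[
rD_k^2\rho = r\partial_k^2\rho + s^2O_{\lambda,\mathcal R}((sh)^2) = s^2\lambda^2\varphi^2(\partial_k\psi)^2 + sO_\lambda(1) + s^2O_{\lambda,\mathcal R}((sh)^2).
\]
For the last factor, differentiating $\varphi = e^{-\lambda\psi}$ twice yields $\partial_j^2\varphi = \lambda^2\varphi(\partial_j\psi)^2 - \lambda\varphi\partial_j^2\psi$, hence, exactly as in the proof of Lemma~\ref{leA5}, $\Delta_\sigma\varphi = \lambda^2\varphi|\nabla_\sigma\psi|^2 + \lambda\varphi\,O(1) = O_\lambda(1)$; and $\sigma^k = O(1)$ by Assumption~\ref{simg}. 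Multiplying the three expansions and collecting terms, keeping the single leading contribution $s^2\lambda^4\varphi^3|\nabla_\sigma\psi|^2\sigma^k(\partial_k\psi)^2$ and absorbing the remainder into $s^2\lambda^3\varphi^3O(1)$, into $sO_\lambda(1)$ (using $s\ge 1$ and the boundedness of $\varphi,\psi,\sigma^k$), and into $s^2O_{\lambda,\mathcal R}((sh)^2)$, produces the first displayed identity.

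For the second identity I would only remark that the first identity already shows $\omega_2^k = s^2O_{\lambda,\mathcal R}(1)$ (each term on its right-hand side is $\le C_{\lambda,\mathcal R}s^2$ once $s\ge 1$ and $sh\le\mathcal R$ are used), and the average operator does not increase the $L_h^\infty$-norm because $|A_kf(x)|\le\max\{|\tau_kf(x)|,|\tau_{-k}f(x)|\}$; hence $A_k(\omega_2^k) = s^2O_{\lambda,\mathcal R}(1)$.

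The third identity is the delicate point, and where I expect the main obstacle. The naive bound $\|D_k^2f\|_{L_h^\infty}\le 4h^{-2}\|f\|_{L_h^\infty}$ applied to $f = \omega_2^k = s^2O_{\lambda,\mathcal R}(1)$ gives only $h^2D_k^2(\omega_2^k) = s^2O_{\lambda,\mathcal R}(1)$, which is a full power of $h$ weaker than claimed; one must instead exploit the cancellation built into the discrete conjugated weight by routing both differences onto the factor $rD_k^2\rho$. To this end I would write $\omega_2^k = P\,b$ with $P := \sigma^k\Delta_\sigma\varphi$ (a sampling of a fixed smooth function, so that its discrete differences up to order two are $O_\lambda(1)$) and $b := rD_k^2\rho$, and expand via Lemma~\ref{le21} and the commutation $A_kD_k = D_kA_k$:
\[
D_k^2(Pb) = (D_k^2P)(A_k^2b) + 2(A_kD_kP)(A_kD_kb) + (A_k^2P)(D_k^2b).
\]
By Lemma~\ref{lexx}, $D_k^p(rD_k^2\rho) = s^2O_{\lambda,\mathcal R}(1)$ for $p = 0,1,2$, and averaging preserves this bound, so $A_k^2b$, $A_kD_kb$ and $D_k^2b$ are all $s^2O_{\lambda,\mathcal R}(1)$; multiplying by the $O_\lambda(1)$ factors coming from $P$ gives $D_k^2(\omega_2^k) = s^2O_{\lambda,\mathcal R}(1)$, whence $h^2D_k^2(\omega_2^k) = s^2h^2O_{\lambda,\mathcal R}(1) \le s^2h\,O_{\lambda,\mathcal R}(1) = sO_{\lambda,\mathcal R}(sh)$ since $h < 1$. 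The essential point is that the two discrete differences must be carried onto $rD_k^2\rho$ and estimated through the sharp bound of Lemma~\ref{lexx} rather than by differentiating $\omega_2^k$ blindly; everything else is routine bookkeeping of the $O(1)$, $O_\lambda(1)$ and $O_{\lambda,\mathcal R}((sh)^2)$ terms, just as in Lemmas~\ref{leA1} and~\ref{leA2}.
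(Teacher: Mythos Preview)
Your proposal is correct and follows exactly the route the paper indicates (the paper's own proof is only the single line ``Similar to Lemma~\ref{leA5} and Lemma~\ref{leA1}''): expand $\omega_2^k=\sigma^k(rD_k^2\rho)(\Delta_\sigma\varphi)$, apply Lemma~\ref{le34} and Lemma~\ref{le32} to $rD_k^2\rho$, compute $\Delta_\sigma\varphi$ as in Lemma~\ref{leA5}, and for the third estimate use the discrete Leibniz rule together with Lemma~\ref{lexx}.

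One small imprecision: you assert that $P=\sigma^k\Delta_\sigma\varphi$ is ``a sampling of a fixed smooth function'' and hence $D_k^2P=O_\lambda(1)$. Under Assumption~\ref{simg} one only knows $D_j(\sigma^k)=O(1)$, not second-order smoothness; the paper itself, in the proof of Lemma~\ref{leA1}, only claims $hD_k^2(\sigma^kA_k\sigma^k)=O(1)$ for this reason. So the safe bound is $hD_k^2P=O_\lambda(1)$, i.e.\ $D_k^2P=O_\lambda(1/h)$. This does not spoil your argument: the corresponding contribution to $h^2D_k^2(\omega_2^k)$ is $h^2\cdot O_\lambda(1/h)\cdot s^2O_{\lambda,\mathcal R}(1)=s^2h\,O_{\lambda,\mathcal R}(1)=sO_{\lambda,\mathcal R}(sh)$, which is exactly what is claimed. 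With this minor adjustment your proof is complete and matches the paper's intended argument.
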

Then we get the estimate of $(A_2v,B_2v)_{\Omega_h}$ from (\ref{bc4}), (\ref{bc5}) and Lemma \ref{bc6}.
Thus the proof of Lemma \ref{le311} is complete.

\subsection{Proof of Lemma \ref{le313}}

We set $\omega_3^k=\varphi\sigma^k|\nabla_\sigma\psi|^2$.
For $k,j=1,\dots,d$ with $k\neq j$, by (\ref{ak}), (\ref{dk}) and Lemma \ref{le21}, we have
\begin{align*}
	\int_{\Omega_{h,k}^*}\omega_3^k|D_kv|^2=&\int_{\Omega_{h,kj}^*}A_j(\omega_3^k|D_kv|^2)\\
	=&\int_{\Omega_{h,kj}^*}A_j(\omega_3^k)A_j(|D_kv|^2)+\frac{h^2}{4}\int_{\Omega_{h,kj}^*}D_j(\omega_3^k)D_j(|D_kv|^2)\\
	=&\int_{\Omega_{h,kj}^*}A_j(\omega_3^k)|A_jD_kv|^2+\frac{h^2}{4}\int_{\Omega_{h,kj}^*}A_j(\omega_3^k)|D_jD_kv|^2-\frac{h^2}{4}\int_{\Omega_{h,k}^*}D_j^2(\omega_3^k)|D_kv|^2\\
	\geq &\frac{h^2}{4}\int_{\Omega_{h,kj}^*}A_j(\omega_3^k)|D_jD_kv|^2-\frac{h}{4}\int_{\Omega_{h,k}^*}((\tau_j-\tau_{-j}) D_j(\omega_3^k))|D_kv|^2,
	\end{align*}
since $D_kv=0$ on $\partial_j(\Omega_{h,k}^*)$.

On the other hand, for $k=1,\dots,d$,
by (\ref{ak}), (\ref{dk}) and Lemma \ref{le21}, we have
\begin{align*}
	\int_{\Omega_{h,k}^*}\omega_3^k|D_kv|^2=&\int_{\Omega_h}A_k(\omega_3^k|D_kv|^2)+\frac{h}{2}\int_{\partial_k\Omega_h}t_r^k(\omega_3^k|D_kv|^2)\\
	\geq &\int_{\Omega_h}A_k(\omega_3^k)A_k(|D_kv|^2)+\frac{h^2}{4}\int_{\Omega_h}D_k(\omega_3^k)D_k(|D_kv|^2)\\
	=&\int_{\Omega_h}A_k(\omega_3^k)|A_kD_kv|^2+\frac{h^2}{4}\int_{\Omega_h}A_k(\omega_3^k)|D_k^2v|^2\\
	&-\frac{h}{4}\int_{\Omega_{h,k}^*}((\tau_k-\tau_{-k})D_k(\omega_3^k))|D_kv|^2+\frac{h^2}{4}\int_{\partial_k\Omega_h}D_k(\omega_3^k)t_r^k(|D_kv|^2)n_k.
		\end{align*}

We thus have
\begin{align}\label{bc7}
\begin{split}
\sum_{j,k=1}^d	\int_{\Omega_{h,k}^*}\omega_3^k|D_kv|^2\geq &\sum_{j,k=1 \atop j\neq k}^d\left(\frac{h^2}{4}\int_{\Omega_{h,kj}^*}A_j(\omega_3^k)|D_jD_kv|^2-\frac{h}{4}\int_{\Omega_{h,k}^*}((\tau_j-\tau_{-j}) D_j(\omega_3^k))|D_kv|^2\right)\\
&+\sum_{k=1}^d\left(\frac{h^2}{4}\int_{\Omega_h}A_k(\omega_3^k)|D_k^2v|^2-\frac{h}{4}\int_{\Omega_{h,k}^*}((\tau_k-\tau_{-k})D_k(\omega_3^k))|D_kv|^2\right)\\
&+\sum_{k=1}^d\int_{\Omega_h}A_k(\omega_3^k)|A_kD_kv|^2+Y_3,
\end{split}
\end{align}	
where $$Y_3=	\frac{h^2}{4}\sum_{k=1}^d\int_{\partial_k\Omega_h}D_k(\omega_3^k)t_r^k(|D_kv|^2)n_k.$$
Moreover, for $j,k=1,\dots,d$, we have
\begin{align}\label{bc8}
	A_j(\omega_3^k)=\omega_3^k+hO_{\lambda, \mathcal R}(1), \quad
	(\tau_j-\tau_{-j}) D_k(\omega_3^k)=O_{\lambda, \mathcal R}(1),
\end{align}
	where $\omega_3^k=\varphi\sigma^k|\nabla_\sigma\psi|^2$.
 From (\ref{bc7}) and (\ref{bc8}),	the result of Lemma \ref{le313} follows.

\subsection{Proof of Lemma \ref{le314}}
We begin proving the first inequality (\ref{e310}) of our Lemma. Recalling that $v=ru, u=\rho v$, thanks to Lemma \ref{le21} and Young's inequality, we have
\begin{align*}
	\int_{\Omega_{h,k}^*}r^2|D_ku|^2&\leq 2\left(\int_{\Omega_{h,k}^*}r^2|D_kvA_k\rho|^2+\int_{\Omega_{h,k}^*}r^2|A_kvD_k\rho|^2\right)\\
	&:=2(H_1+H_2).
\end{align*}
Let us first estimate $H_2$. Using Lemma \ref{le21}, (\ref{ak}) and the second equality in Lemma \ref{le34}  we botain
\begin{align*}
	H_2\leq \int_{\Omega_{h,k}^*}|rD_k\rho|^2A_k(|v|^2)=\int_{\Omega_h}A_k(|rD_k\rho|^2)|v|^2\leq s^2\int_{\Omega_h}|O_{\lambda,\mathcal R}(1)| |v|^2,
\end{align*}
since $v=0$ on $\partial_k\Omega_h$.
It remains to prove that
$$H_1\leq \int_{\Omega_{h,k}^*}|O_{\lambda,\mathcal R}(1)| |D_kv|^2,$$
which follows from the first equality in Lemma \ref{le34}, and inequality (\ref{e310}) is proved.

To prove the inequality (\ref{e311}), we note that
$$|\rho D_kv|^2=|\rho A_krD_ku+\rho D_krA_ku|^2\leq C_{\lambda,\mathcal R}(|D_ku|^2+s^2|A_ku|^2),$$
due to Lemma \ref{le21} and the first and second equality Lemma \ref{le34}. Then, by virtue of  $t_r^k(|A_ku|^2)=\frac{h^2}{4}t_r^k(|D_ku|^2)$ and $sh\leq \mathcal R$, we have
\begin{align}\label{bc9}
	t_r^k(|D_kv|^2)=t_r^k(r^2|\rho D_kv|^2)\leq C_{\lambda,\mathcal R}t_r^k(r^2)t_r^k(|D_ku|^2)
\end{align}
Moreover, by Definition \ref{deno}, we have
\begin{align}\label{bc10}
|\partial_nu|^2=|t_r^k(\sigma^kD_ku)n_k|^2=|t_r^k(\sigma^k)|^2|t_r^k(D_ku)|^2\geq Ct_r^k(|D_ku|^2),	
\end{align}
for  $x\in\partial_k\Omega_h$, $k=1,\dots,d$.
 Combining (\ref{bc9}) with (\ref{bc10}), we get (\ref{e311}).
Thus the proof of Lemma \ref{le314} is complete.

 \section{}
\begin{proposition}\label{regu}
	Let  Assumption \ref{assq} and  Assumption \ref{as2} hold. Let $M>0$,  $ q\in C(\Omega_h)$ satisfying $\|q\|_{L_h^\infty(\Omega_h)}\leq M$  and let $u\in\mathcal C(\overline\Omega_h)$ satisfying
\begin{align}\label{BB}
\begin{split}
		-\Delta_hu+qu&=f,\quad {\rm in}~\Omega_h,\\
		u&=0,\quad {\rm on}~\partial\Omega_h,
		\end{split}
	\end{align}
	where $f\in\mathcal C(\Omega_h)$ and $f=0$ on $\partial\Omega_h$. Then
 there exists constant $C>0$ independent of $h$ such that
\begin{align}\label{B1}
\|u\|_{H_h^2(\Omega_h)}\leq C\|f\|_{L_h^2(\Omega_h)}.	
\end{align}

\end{proposition}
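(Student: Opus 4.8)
The plan is to reduce (\ref{B1}) to an $h$-uniform second-order a priori estimate for $\Delta_h$ and to supply the first-order input through Assumption \ref{as2}.

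First I would collect the elementary consequences of the hypotheses. Since $u$ solves (\ref{BB}), Assumption \ref{as2} yields $\|u\|_{H_h^1(\Omega_h)}\le C\|f\|_{L_h^2(\Omega_h)}$ with $C$ independent of $h$, and in particular $\|u\|_{L_h^2(\Omega_h)}\le C\|f\|_{L_h^2(\Omega_h)}$. Rewriting the equation as $\Delta_h u=qu-f$ in $\Omega_h$ and using $\|q\|_{L_h^\infty(\Omega_h)}\le M$ gives
$$\|\Delta_h u\|_{L_h^2(\Omega_h)}\le\|f\|_{L_h^2(\Omega_h)}+M\|u\|_{L_h^2(\Omega_h)}\le C\|f\|_{L_h^2(\Omega_h)}.$$

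The core step is the discrete elliptic regularity inequality
$$\|w\|_{\mathring H_h^2(\Omega_h)}^2\le C\Big(\|\Delta_h w\|_{L_h^2(\Omega_h)}^2+\|w\|_{H_h^1(\Omega_h)}^2\Big)$$
for every $w\in\mathcal C(\overline\Omega_h)$ with $w|_{\partial\Omega_h}=0$, with $C$ depending only on $\underline\sigma,\overline\sigma,M_0$, hence on the mesh only through Assumption \ref{simg}. I would prove it by expanding
$$\|\Delta_h w\|_{L_h^2(\Omega_h)}^2=\sum_{j,k=1}^d\int_{\Omega_h}D_k(\sigma^kD_kw)\,D_j(\sigma^jD_jw),$$
splitting each factor by the Leibniz rules of Lemma \ref{le21} and integrating by parts in the two directions via Lemma \ref{le28}. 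The diagonal terms $j=k$ give $\int(A_k\sigma^k)^2|D_k^2w|^2$ up to $2\int(D_k\sigma^k)(A_k\sigma^k)(A_kD_kw)(D_k^2w)$, which Cauchy--Schwarz absorbs into $\varepsilon\int|D_k^2w|^2$ plus a multiple of $\int|A_kD_kw|^2\le\|w\|_{\mathring H_h^1(\Omega_h)}^2$ (using $(A_kD_kw)^2\le A_k(|D_kw|^2)$ from Corollary \ref{co22}); the off-diagonal terms $j\ne k$, after two integrations by parts, give $\int(A_k\sigma^k)(A_j\sigma^j)|D_kD_jw|^2$ plus terms carrying at least one difference $D_\bullet\sigma^\bullet$, again of the form $\varepsilon\|w\|_{\mathring H_h^2(\Omega_h)}^2+C_\varepsilon\|w\|_{H_h^1(\Omega_h)}^2$. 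The point is that every boundary term produced in these manipulations carries a difference of $w$ taken \emph{along a face} of the cube $\Omega$, on which $w\equiv 0$, so it vanishes; this product structure of the cube replaces the convexity argument used in the continuous case. Bounding the good terms below by $\underline\sigma^2$ and choosing $\varepsilon$ small enough to move the residual $\|w\|_{\mathring H_h^2(\Omega_h)}^2$ to the left gives the inequality.

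Applying this with $w=u$, and combining with the two bounds of the second paragraph, the identity $\|u\|_{H_h^2(\Omega_h)}^2=\|u\|_{H_h^1(\Omega_h)}^2+\|u\|_{\mathring H_h^2(\Omega_h)}^2$ then yields (\ref{B1}). I expect the main obstacle to be purely organizational: making the repeated discrete integrations by parts on $\Omega_h$ precise requires careful tracking of the shifted grids $\Omega_{h,k}^*$, $\Omega_{h,kj}^*$ and the thin boundary layers of $\overline\Omega_h$, handled as in the proof of Proposition \ref{le29} via the zero-extension operator; once the boundary terms are seen to drop by the cube structure, what remains is a routine uniformly-elliptic energy estimate.
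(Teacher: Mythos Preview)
Your approach is correct and genuinely different from the paper's. Both start the same way (Assumption~\ref{as2} gives the $H_h^1$ bound, reducing the task to controlling $\|u\|_{\mathring H_h^2}$ by $\|\Delta_h u\|_{L_h^2}$ plus lower order), but the paper does \emph{not} work directly on $\Omega_h$. Instead it extends $u$ by successive odd reflections to the tripled cube $\Omega_{\mathrm{ext},h}=h\mathbb Z^d\cap(-1,2)^d$, so that $-\Delta_h\widetilde u=\widetilde f$ on the larger domain; then it multiplies by $-\chi D_i^2\widetilde u$ with a cutoff $\chi\in C_c^\infty((-1,2)^d)$ equal to $1$ on $[0,1]^d$ and integrates by parts. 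All boundary terms vanish because of the compactly supported cutoff, and the commutator terms carrying $D\chi$, $D\sigma$ are lower order. This is the classical ``extend--cut off--test with a second derivative'' regularity argument.

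Your direct method, by contrast, stays on $\Omega_h$ and relies on the product structure of the cube to kill the boundary terms: in the first integration by parts the surviving trace on $\partial_k\Omega_h$ is the tangential second difference $D_j(\sigma^jD_jw)$ ($j\neq k$), which vanishes since $w\equiv0$ on the whole face; in the second, the trace on $\partial_j(\Omega_{h,k}^*)$ carries $D_kw$, again tangential and hence zero. This is cleaner and in fact slightly stronger than the paper's version, since your argument never uses the extra hypothesis $f=0$ on $\partial\Omega_h$ (the paper needs it so that the odd extension satisfies $-\Delta_h\widetilde u=\widetilde f$ across the original boundary). The trade-off is that the reflection-plus-cutoff method is the more robust template when the geometry or the coefficients make the direct vanishing of traces less transparent; on the cube with $w|_{\partial\Omega_h}=0$, your route is the shorter one.
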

\begin{proof}
By Assumption \ref{as2},	 we have
\begin{align}\label{B2}
\|u\|_{H_h^1(\Omega_h)}\leq C\|f\|_{L_h^2(\Omega_h)},
\end{align}	
for some constant $C>0$ independent of $h$. Accordingly, replacing $f$ by $f-qu$, we are reduced to the case $q=0$, that we assume from now.

Since $\Omega_h=h\mathbb Z^d\cap (0,1)^d$, we first propose to extend $u$ a priori defined on the 	discrete domain $\Omega_h$ to $\Omega_{{\rm ext},h}=h\mathbb Z^d\cap (-1,2)^d$ as follows.  For $x=(x_1,\dots, x_d)\in\left([0,1]^{d-1}\times (-1,2)\right)\cap\Omega_{{\rm ext},h}$, we set $\widetilde u(x)=-u(x_1,\dots,x_{d-1},-x_d)$ for $x_d\in(-1,0)$ and 	$\widetilde u(x)=-u(x_1,\dots,x_{d-1},1-(x_d-1))$ for $x_d\in(1,2)$. This defines $\widetilde u$ on $\left([0,1]^{d-1}\times (-1,2)\right)\cap\Omega_{{\rm ext},h}$. Then, for $x=(x_1,\dots, x_d)\in\left([0,1]^{d-2}\times (-1,2)\times[0,1]\right)\cap\Omega_{{\rm ext},h}$, we set $\widetilde u(x)=-u(x_1,\dots,x_{d-2},-x_{d-1},x_d)$ for $x_{d-1}\in(-1,0)$ and 	$\widetilde u(x)=-u(x_1,\dots,1-(x_{d-1}-1),x_d)$ for $x_{d-1}\in(1,2)$. This defines $\widetilde u$ on $\left([0,1]^{d-2}\times (-1,2)\times[0,1]\right)\cap\Omega_{{\rm ext},h}.$ By induction, we then extend it for $x=(x_1,\dots, x_d)\in\Omega_{{\rm ext},h}$ by setting $\widetilde u=-u(-x_1,\dots, x_d)$ for $x_1\in(-1,0)$ and $\widetilde u=-u(1-(x_1-1),\dots, x_d)$ for $x_1\in(1,2)$. We do a similar extension $\widetilde f$ of $f$ on $\Omega_{{\rm ext},h}$ taking care of choosing $\widetilde f=0$ on $\partial\Omega_h$.

We thus have constructed a solution $\widetilde u$ of
\begin{align}\label{B4}
\begin{split}
		-\Delta_h\widetilde u&=\widetilde f,\quad {\rm in}~\Omega_{{\rm ext},h},\\
	\widetilde	u&=0,\quad {\rm on}~\partial\Omega_{{\rm ext},h}.
		\end{split}
	\end{align}	
We then choose a function $\chi\in C_c^\infty((-1,2)^d)$ such that $\chi=1$ on $[0,1]^d$ and we multiply (\ref{B4}) by $-\chi D_1^2\widetilde u$, after some integrations by parts where all the boundary terms vanish due to the choice of $\chi$, we obtain
\begin{align}\label{B5}
	\begin{split}
		&\int_{\Omega_{ext,h}}\chi A_1\sigma^1|D_1^2\widetilde u|^2+\sum_{k=2}^d\int_{(\Omega_{ext,h})_1^*}A_1(A_k\chi\sigma^k)|D_1D_k\widetilde u|^2\\
		=&-\int_{\Omega_{ext,h}}\chi D_1^2\widetilde u\widetilde f-\int_{\Omega_{ext,h}}\chi D_1\sigma^1D_1^2\widetilde uA_1D_1\widetilde u
		+\sum_{k=2}^d\int_{\Omega_{ext,h}}A_k(D_k\chi\sigma^kD_k\widetilde u)D_1^2\widetilde u\\
		&-\sum_{k=2}^d\int_{(\Omega_{ext,h})_1^*}D_1(A_k\chi\sigma^k)A_1D_k\widetilde u D_kD_1\widetilde u.
			\end{split}
\end{align}	
	Of course, since $\chi=1$ on $[0,1]^d$, the left hand-side of (\ref{B5}) is bounded from below by
	$$C\left(\int_{\Omega_h}|D_1^2u|^2+\sum_{k=2}^d\int_{\Omega_{h,k1}^*}|D_1D_ku|^2\right).$$
	On the other hand, noting that $\widetilde u$ and $\widetilde f$ are symmetric extensions of $u$ and $f$, the right hand-side of (\ref{B5}) is bounded form above by
	$$C\left(\left(\int_{\Omega_h}|D_1^2u|^2\right)^\frac{1}{2}
	+\sum_{k=2}^d\left(\int_{\Omega_{h,k1}^*}|D_1D_ku|^2\right)^\frac{1}{2}\right)\left(\|f\|^2_{L^2_h(\Omega_h)}+\|u\|_{H^1_h(\Omega_h)}\right).$$
We thus obtain
\begin{align}\label{B6}
	\left(\int_{\Omega_h}|D_1^2u|^2\right)^\frac{1}{2}
	+\sum_{k=2}^d\left(\int_{\Omega_{h,k1}^*}|D_1D_ku|^2\right)^\frac{1}{2}\leq C\left(\|f\|^2_{L^2_h(\Omega_h)}+\|u\|_{H^1_h(\Omega_h)}\right).
\end{align}	
Similarly, we have 	
\begin{align}\label{B7}
	\left(\int_{\Omega_h}|D_i^2u|^2\right)^\frac{1}{2}
	+\sum_{k=2}^d\left(\int_{\Omega_{h,ki}^*}|D_iD_ku|^2\right)^\frac{1}{2}\leq C\left(\|f\|^2_{L^2_h(\Omega_h)}+\|u\|_{H^1_h(\Omega_h)}\right),
\end{align}		
	for $i=2,\dots,d$.
	Combining (\ref{B6}), (\ref{B7}) and (\ref{B2}), we obtain the inequality (\ref{B1}).	
	
\end{proof}

\end{appendix}

\section*{Acknowledgements}

The research of XZ was supported in part by National Key R\&D Program of China under grant 2023YFA1009002.

The research of GY was supported in part by NSFC grants 11771074 and National Key R\&D Program of China (No. 2020YFA0714102 and No. 2021YFA1003400).

\end{document}